\documentclass[11pt]{amsart}

\usepackage[left=1in, right=1in, top=1.2in, bottom=1.2in]{geometry}
\usepackage{microtype, mathtools, mathrsfs, enumerate, dsfont, esvect}
\usepackage{tikz,wasysym}
\usepackage{pgf}

\usetikzlibrary{positioning,automata}
\usetikzlibrary{arrows,automata,positioning}
\usepackage[font=small,labelfont=bf]{caption}
\setlength{\parskip}{.2em}
\usepackage{bbm}
\usepackage{verbatim}
\usepackage[linktocpage]{hyperref}
\hypersetup{
    colorlinks=true,        
    linkcolor=red,          
    citecolor=blue,         
    filecolor=magenta,      
    urlcolor=cyan           
}

\usepackage{amssymb}
\usepackage{url}

\hyphenation{arch-i-med-e-an}



\newcommand{\Z}{\mathbb{Z}}
\newcommand{\Q}{\mathbb{Q}}
\newcommand{\calA}{\mathcal{A}}
\newcommand{\calB}{\mathcal{B}}

\newcommand{\calR}{\mathcal{R}}
\newcommand{\calS}{\mathcal{S}}
\newcommand{\calP}{\mathcal{P}}
\newcommand{\calL}{\mathcal{L}}

\newcommand{\Diam}{\operatorname{Diam}}
\newcommand{\R}{\mathbb{R}}
\newcommand{\N}{\mathbb{N}}


\newtheorem{theoremA}{Theorem}
\newtheorem{thm}{Theorem}

\newtheorem{lem}[thm]{Lemma}
\newtheorem{fact}[thm]{Fact}
\newtheorem{rem}[thm]{Remark}
	
		\numberwithin{thm}{section}
	\newtheorem{lemma}[thm]{Lemma}
	\newtheorem{prop}[thm]{Proposition}
	\newtheorem{cor}[thm]{Corollary}

	\newtheorem*{thm*}{Theorem}
	\newtheorem*{lemma*}{Lemma}
	\newtheorem*{prop*}{Proposition}
	\newtheorem*{cor*}{Corollary}
	\newtheorem*{conj*}{Conjecture}
\theoremstyle{definition}
	
	\newtheorem*{example*}{Example}
	\newtheorem{definition}[thm]{Definition}

\numberwithin{thm}{section}

\theoremstyle{definition}
\newtheorem{defn}[thm]{Definition}

\newtheorem{rmk}[thm]{Remark}
\newcommand{\dcl}{\operatorname{dcl}}


\title[Sparse regular subsets of $\R^n$]{Sparse regular subsets of $\R^n$}

\author{Jason Bell}
\address{University of Waterloo \\
Department of Pure Mathematics \\
Waterloo, Ontario \\
N2L 3G1, Canada}
\email{jpbell@uwaterloo.ca}

\author{Alexi Block Gorman}
\address{The Ohio State University\\
Department of Mathematics\\
Mathematics Tower 100\\ 
231 W 18th Ave\\ 
Columbus, OH, 43210
}
\email{blockgorman.1@osu.edu}

\thanks{The first-named author was supported by NSERC grant RGPIN-2016-03632.  The second-named author was supported by the National Science Foundation under award No. DMS -2303368, as well as the MSCA Cofund MathInGreaterParis program.}

\subjclass[2020]{Primary 03C64, 03D05 Secondary 28A80}
\keywords{B\"uchi automata, Finite automata, Hausdorff dimension, Model theory, Tame geometry, Definability, D-minimality}

\begin{document}

\title{Sparse regular subsets of the reals}
\date{\today}

\begin{abstract}
This paper concerns the expansion of the real ordered additive group by a predicate for a subset of $[0,1]$ whose base-$r$ representations are recognized by a B\"uchi automaton.
In the case when this predicate is closed, a dichotomy is established for when this expansion is interdefinable with the structure $(\mathbb{R},<,+,0,r^{-\mathbb{N}})$ for some $r \in \mathbb{N}_{>1}$.
In the case when the closure of the predicate has Hausdorff dimension less than $1$, the dichotomy further characterizes these expansions of $(\mathbb{R},<,+,0,1)$ by when they have NIP and NTP$_2$, which is precisely when the closure of the predicate has Hausdorff dimension $0$.
\end{abstract}
\maketitle

\section{Introduction}\label{r:intro}

This paper concerns how geometric and model-theoretic notions of tameness interact for expansions of the real ordered additive group by a subset of $[0,1]$ that is $r$-regular.
Notions from the theory of finite automata are adapted to the real additive group setting, and used to establish a partial tameness dividing line for these expansions.
Key tools include examining the interactions of automaton-theoretic properties and topological properties of subsets of the real numbers.

A finite automaton is a machine with finitely many states, some subset of which are called accepting states, with the remaining states being called rejecting states.  
The machine takes finite-length strings over a fixed finite alphabet $\Sigma$ as input, and, beginning at a fixed starting state, it moves from state to state based upon simple transition rules as it reads the string from left-to-right. 
The machine then either accepts or rejects a string depending upon whether or not it arrives in an accepting state after it has finished reading the word. 

As is standard, if $X$ is a set, then $X^{\omega}$ denotes the set of all sequences of elements of $X$ indexed by $\omega$, or equivalently all functions from the ordinal $\omega$ to the set $X$.
B\"uchi automata differ from classical finite automata in that they take infinite-length strings over $\Sigma$ as input.  This is accomplished by taking a finite automaton and imposing the acceptance condition that an infinite string $w\in \Sigma^{\omega}$ is accepted precisely if a run of the automaton on $w$ enters an accept state infinitely often.

For both kinds of automata, we say that an automaton \emph{recognizes} a set $X$ (either a subset of $\Sigma^*$ or, if it is a B\"uchi automaton, a subset of $\Sigma^{\omega}$) if every element of $X$ is accepted by the automaton, and no element of $X^c$ (the complement) is accepted.
B\"uchi automata have long been an important object of study in logic, combinatorics on words, and theoretical computer science.
Much of the context for the results of this paper and many of the connections between B\"uchi automata and topological phenomena on the reals come from \cite{CLR15}.

We borrow the notion of ``sparse'' sets recognized by automata. These sets arise naturally in many contexts, including: Derksen's \cite{Derksen} extension of the Skolem-Mahler-Lech theorem on zero sets of linear recurrences to positive characteristic base fields; the isotrivial case of the Mordell-Lang conjecture \cite{BGM20, BM19}; the characterization of which $k$-automatic sets $S\subseteq \N$ form an additive basis for the natural numbers \cite{BHS};
Kedlaya's description of the algebraic closure of function fields in positive characteristic \cite{K}, \cite{K2}; as well, as other uses in theoretical computer science \cite{Gawrychowski&Krieger&Rampersad&Shallit:2010, Ibarra&Ravikumar:1986, Trofimov:1981}.

We introduce a natural analog for sparsity in the setting of B\"uchi automata, one which we demonstrate is useful in the setting of real $r$-regular sets.
Among other things, the notion of sparsity for B\"uchi automata allows for the development of a definability dividing line simultaneously in terms of fractal dimension and model-theoretic notions.

To state this result formally requires first defining what it means for a subset of $[0,1]\subseteq \R$ to be $r$-regular.
Below, let $r \in \N$ be greater than one, and set $[r]:=\{0,\ldots ,r-1\}$.
We also let $[r]^{\omega}$ denote the set of all functions from the ordinal $\omega$ to the set $[r]$.
\begin{defn} \label{r-reg1}
We say that $A \subseteq [0,1]$ is \emph{$r$-regular} if there is a B\"uchi automaton $\calA$ with alphabet $\{0, \ldots ,r-1\}$ that recognizes a set $L \subseteq \{0, \ldots ,r-1\}^{\omega}$ such that $(w_i)_{i< \omega} \in L$ if and only if there is $x \in A$ such that 
$$x=\sum_{i=0}^{\infty} \frac{w_i}{r^{i+1}}.$$
Moreover, if this holds we say that $\calA$ \emph{recognizes} $A$.
\end{defn}

\begin{defn}\label{sparse defn}
We say that $A \subseteq [0,1]$ is \emph{$r$-sparse} if it is $r$-regular and the set of length-$n$ prefixes of $r$-representations of elements in $A$ grows at most polynomially in $n$. 
\end{defn}

It is not hard to show that in Definition \ref{sparse defn}, if one automaton recognizing the set $A$ has the property that the set of strings that have an infinite prolongation that is accepted by the automaton grows polynomially, then this holds for all automata recognizing $A$.

Recall that in \cite{vdD85}, van den Dries proves that the expansion of both the real additive group and the reals as an ordered field by a predicate for $2^{\Z}$ has desirable tameness properties, including d-minimality (see \cite{Mi05} for an introduction to the notion) and decidability.
The results of \cite{vdD85} and the following interdefinability result imply many desirable properties hold for the expansion of $(\R,<,+,0,1)$ by a predicate $A$ that picks out an $r$-sparse subset of $[0,1]$.
Throughout this paper we take 
\begin{equation}
 \calR_{A} :=(\R, <,+,0, 1, A) \qquad {\rm and} \qquad   \calR_{r, \ell} :=(\R, <, +, 0,1, r^{-\ell \N}). 
\end{equation}

\begin{theoremA}[Corollary \ref{sparseD}]\label{sparse rN}
Let $r>1$ be a natural number, and suppose $A\subseteq [0,1]^d$ is $r$-sparse.  Then there exists $\ell \in \N$ such that $A$ is $\emptyset$-definable in $\calR_{r, \ell}$,
and the set $r^{-\ell \N}$ is $\emptyset$-definable in $\calR_{A}$.
\end{theoremA}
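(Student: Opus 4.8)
The plan is to deduce the statement from a concrete structure theorem for $r$-sparse sets, and then read off each half of the interdefinability from that description.

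\emph{Structure theorem (the main preliminary).} First I would prove that if $A\subseteq[0,1]^d$ is $r$-sparse then there is $\ell_0\in\N$ such that $A$ is a finite union of sets each of the form
\[
\Bigl\{\,q+\textstyle\sum_{j=1}^{M}c_{j}\,r^{-\ell_0 k_j}\ :\ k_1\le k_2\le\cdots\le k_M,\ k_j\ge t_j\ (1\le j\le M)\,\Bigr\},
\]
with $q,c_j\in\Q$ and $t_j\in\N$ (for $d>1$ one has on each piece a $d$-tuple of such expressions sharing the same exponents $k_j$). The engine is the classical theorem that a regular language of finite words has polynomially bounded growth iff it is a finite union of bounded languages $u_0v_1^{*}u_1\cdots v_m^{*}u_m$; applying this to the language of live prefixes of a B\"uchi automaton recognizing $A$, together with the standard analysis of the associated $\omega$-language, shows that the $\omega$-language is a finite union of sets $u_0 v_1^{*}u_1\cdots v_m^{*}u_m\,y^{\omega}$. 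Passing from words to real values, taking $\ell_0$ to be a common multiple of the loop lengths $|v_i|$ and $|y|$, splitting each iteration count into residue classes modulo $\ell_0/|v_i|$, and splitting into finitely many cases by the relative order of the iteration counts so that the contributing digit blocks become pairwise disjoint, yields the displayed normal form; the only fiddly point is that an $r$-adic value has two $r$-representations, but including both leaves $A$ unchanged. I would also note that if the normal form holds for $\ell_0$ it holds for every multiple of $\ell_0$ (split again into residue classes), so below I am free to enlarge $\ell_0$; the final $\ell$ will be such a multiple.

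\emph{$A$ is $\emptyset$-definable in $\calR_{r,\ell}$.} This is immediate from the normal form. In $\calR_{r,\ell}$ the set $r^{-\ell\N}$ is basic; each $r^{-\ell j}$ is $\emptyset$-definable by induction (it is the largest element of $r^{-\ell\N}$ strictly below $r^{-\ell(j-1)}$, with $r^{0}=1$); every rational is $\emptyset$-definable in $(\R,<,+,0,1)$, so rational-affine combinations of members of $r^{-\ell\N}$ are $\emptyset$-definable; and the order constraints translate directly ($k_a\le k_b$ becomes $r^{-\ell k_b}\le r^{-\ell k_a}$, and $k_j\ge t_j$ becomes $r^{-\ell k_j}\le r^{-\ell t_j}$). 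Hence each normal-form piece, and so $A$, is $\emptyset$-definable in $\calR_{r,\ell}$.

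\emph{$r^{-\ell\N}$ is $\emptyset$-definable in $\calR_A$.} A singleton $\omega$-language is $\omega$-regular only when its word is ultimately periodic, so every point of a finite $r$-regular set is rational; if $A$ is finite it is thus $\emptyset$-definable in $(\R,<,+,0,1)$ and one may take $\ell=0$. Assume $A$ infinite. Projection to a coordinate only shrinks the prefix language, so each $\pi_i(A)$ is again $r$-sparse, and it is $\emptyset$-definable in $\calR_A$; hence it suffices to carry out the argument for a single coordinate. By the structure theorem $A\subseteq[0,1]$ is countable, so its Cantor--Bendixson derivatives $A^{(0)}\supsetneq A^{(1)}\supsetneq\cdots$ reach $\emptyset$ in finitely many steps, each $A^{(n)}$ is $\emptyset$-definable in $\calR_A$, and (boundedness) the last nonempty one $A^{(T)}$ is finite; moreover, by the normal form, $A^{(T-1)}\setminus A^{(T)}$ is, near each point of $A^{(T)}$, a finite union of geometric progressions of ratio $r^{-\ell}$ with rational scalings, accumulating onto that point. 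Thus for a small rational $\delta$ the $\emptyset$-definable set $V:=\{\,a-p : a\in A^{(T-1)},\ p\in A^{(T)},\ 0<a-p<\delta\,\}$ equals a nonempty finite union $\bigcup_{c\in F}\{\,c\,r^{-\ell k} : k\ge K_c\,\}$ with $F\subseteq\Q_{>0}$ finite. Finally I would recover $r^{-\ell\N}$ from $V$ using only $<$, $+$ and the parameter-free rationals: after rescaling $V$ by some $c_0^{-1}$, $c_0\in F$, we may assume $1\in F$; then disentangle the finitely many progressions to isolate the copy with scaling $1$, and adjoin the finitely many rational points $r^{-\ell j}$, $j<K$.

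\emph{Main obstacle.} The hard part is this last disentanglement: pulling a single clean geometric progression out of a finite union of progressions that all share the ratio $r^{-\ell}$ but have different rational scalings, which is not a one-line rescaling when the scalings interleave. This is exactly what forces a careful choice of $\ell$, equivalently of the base $r^{\ell}$ (note $\calR_{r,\ell}$ depends only on $r^{\ell}$): after normalizing the scalings into $[1,r^{\ell})$ they become pairwise non-congruent modulo multiplication by $r^{\ell}$, and it is this separation that lets a finite combination of rational rescalings and intersections of $V$ land on $r^{-\ell\N}$ with no spurious elements. Establishing the $\omega$-word form of the classical sparse-language structure theorem used in the first step is the other place where genuine care is needed.
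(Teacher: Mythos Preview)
Your outline is close to the paper's: the structure theorem for sparse $\omega$-languages, the definability of $A$ in $\calR_{r,\ell}$, the reduction to $d=1$ by projection, and the use of Cantor--Bendixson derivatives to cut down to a set with finitely many accumulation points are all exactly what the paper does. One small slip: you apply the Cantor--Bendixson process to $A$ rather than to $\bar A$. For $A=\{r^{-n}:n\ge 1\}$ one has $A'=\emptyset$, so your ``last nonempty derivative $A^{(T)}$'' is $A$ itself, which is infinite. The paper first replaces $A$ by its closure (definable, and still $r$-sparse) before iterating; with that fix your reduction to a finite union $V=\bigcup_{c\in F}\{c\,r^{-\ell k}:k\ge K_c\}$ goes through.

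The genuine gap is the step you flag as the main obstacle: you assert that ``a finite combination of rational rescalings and intersections of $V$ lands on $r^{-\ell\N}$'', but you neither say which combination nor why it works, and the observation that distinct scalings in $[1,r^\ell)$ are non-congruent modulo $r^\ell$ is not by itself enough. For instance, with $r^{\ell}=8$ and $F=\{1,2,4\}$ the natural candidate $\bigcap_{c\in F}c^{-1}V$ returns all of $V$, since $F$ is closed under multiplication modulo powers of $8$. The paper's proof of this step (Proposition~\ref{rtoNdefinable}) is where the real content lies: after reducing to $A=\bigcup_i b_i\,r^{-\delta_i\N}$ with a single accumulation point at $0$ and positive integer $b_i$, one defines $B=\{x\in(0,1):b_ix\in A\text{ for all }i\}$, assumes without loss of generality that $r$ is not a perfect power (absorbing the exponent into the $\delta_i$), and then runs a short but essential cycle argument in the multiplicative equivalence relation $x\sim_r y\Leftrightarrow x/y\in r^{\Z}$ to show that any $\lambda\in B$ with $\lambda\not\sim_r 1$ forces $\lambda^m\sim_r 1$ for some $m$, hence $\lambda\sim_r 1$ after all. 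This yields $B=r^{-T}$ for an infinite eventually periodic $T\subseteq\N$, and a further definable step (intersecting shifted copies of $B$ according to the residues of $T$ modulo its minimal period) extracts $r^{-\ell'\N}$. Your proposal needs an argument of this kind; without it, and without the ``$r$ not a perfect power'' normalization, the disentanglement does not go through.
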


Recall that $X \subseteq \R^d$ is called a \emph{Cantor set} if it is compact, has no isolated points, and no interior.
From d-minimality, we can conclude that for an $r$-sparse set $A$ the structure $\calR_{A} $ does not define a Cantor set.
Theorem \ref{sparse rN}, in conjunction with the following result, gives us a characterization of what kind of definable sets show up in expansions of the real ordered additive group by a predicate for an $r$-regular subset of $[0,1]$.
Below, for $X \subseteq \R$ let $d_H(X)$ denote the Hausdorff dimension of $X$.

\begin{theoremA}[Theorem \ref{hdimCantor}]\label{tame}
If $A$ is a closed $r$-regular subset of $[0,1]$ such that $0<d_H(A)<1$, then there is a Cantor set definable in $\calR_A$.
\end{theoremA}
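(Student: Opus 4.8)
The plan is to show that the perfect kernel of $A$ itself is the required Cantor set, once one checks it is definable. Since $d_H(A)<1$, the one‑dimensional Hausdorff measure of $A$ vanishes, so $A$ has empty interior; since $d_H(A)>0$, the set $A$ is uncountable (a countable set has Hausdorff dimension $0$). Hence, by the Cantor--Bendixson theorem, the perfect kernel $A_\infty:=\bigcap_k A^{(k)}$ — where $X^{(0)}=X$ and $X^{(k+1)}$ is the set of non‑isolated points of $X^{(k)}$ — is a nonempty perfect subset of $A$; being closed and bounded it is compact, and being contained in $A$ it has empty interior. Thus $A_\infty$ is a (nonempty) Cantor set, and the theorem reduces to showing $A_\infty$ is definable in $\calR_A$.

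For this, I would first note that the Cantor--Bendixson derivative is a first‑order operation over $(\R,<,+)$: if $X$ is definable in $\calR_A$ then so is $X'=\{x\in X:\forall\varepsilon>0\,\exists y\,(y\in X\wedge x\neq y\wedge x-\varepsilon<y<x+\varepsilon)\}$, and hence so is $X^{(k)}$ for each fixed $k\in\N$. Therefore it suffices to prove the purely combinatorial fact that a closed $r$‑regular subset of $[0,1]$ has finite Cantor--Bendixson rank, i.e.\ $A_\infty=A^{(N)}$ for some $N\in\N$; then $A_\infty$ is $\calR_A$‑definable and we are done.

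To establish finite rank I would pass to the prefix automaton of $A$: a finite automaton $M$, with state set $Q$ and transition function $\delta$, recognizing the tree $T_A=\{w\in[r]^*:I_w\cap A\neq\emptyset\}$ with $I_w:=[\,0.w,\ 0.w+r^{-|w|}\,]$, so that $A=\bigcap_n\bigcup_{|w|=n,\ w\in T_A}I_w$. For each state $s$ let $A_s\subseteq[0,1]$ be the closed set obtained by running $M$ from $s$, so $A=A_{q_0}$ and
\[
A_s=\bigcup_{a\in[r]}\left(\tfrac{1}{r}A_{\delta(s,a)}+\tfrac{a}{r}\right).
\]
The key computation is that the Cantor--Bendixson derivative commutes with this decomposition: $(A_s)'=\bigcup_a\bigl(\tfrac1r(A_{\delta(s,a)})'+\tfrac ar\bigr)$, and hence $(A_s)^{(k)}=\bigcup_a\bigl(\tfrac1r(A_{\delta(s,a)})^{(k)}+\tfrac ar\bigr)$ for all $k$. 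This identity already disposes of the double‑representation and gluing subtleties, since a junction point $\tfrac{a+1}{r}$ that is a one‑sided limit point of $A_s$ is automatically contributed by the endpoint of the relevant summand. Using this together with the finiteness of $Q$, I would bound $\rho(A_s)$ by induction on the height of the strongly connected component of $s$ in the condensation of $M$: a state whose reachable subautomaton never branches yields a single point (rank $1$); and inside any strongly connected component, after applying the finitely many derivatives needed to reach the kernels of all strictly lower components, the remaining derived set is perfect, because from any of its points an alternative branch can be propagated around the component to produce nearby points. This gives $N:=\rho(A)<\infty$. (Alternatively, one may invoke the fact that a closed $\omega$‑regular subset of $[r]^\omega$ has finite Cantor--Bendixson rank, in the spirit of \cite{CLR15}.)

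The hard part will be this finite‑rank step — specifically the ``propagate a branch around a strongly connected component'' argument and the bookkeeping needed to make the induction over the condensation close off with a genuine bound; the commuting identity for the derivative is what makes the concatenation of base‑$r$ blocks harmless, so the remaining difficulty is automata‑theoretic rather than analytic. Everything else (definability of the derived set, compactness, emptiness of the interior from $d_H(A)<1$, and non‑emptiness of the kernel from $d_H(A)>0$) is routine.
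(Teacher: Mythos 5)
Your overall architecture is essentially the paper's: you definably iterate the Cantor--Bendixson derivative a fixed finite number of times to land on the perfect kernel, observe that it is compact, has no interior because $d_H(A)<1$ forces $\mu^1_H(A)=0$, and is nonempty because $d_H(A)>0$ forces $A$ to be uncountable (the paper phrases nonemptiness via invariance of Hausdorff dimension under deleting the countable part, which is the same point). Where you diverge is in how the uniform bound $N$ with $A_\infty=A^{(N)}$ is obtained. The paper gets it from B\"uchi's decomposition $L=\bigcup_i V_iW_i^{\omega}$: Lemma \ref{ctbl} identifies the locally countable part of $A$ as the complement of the closure of the uncountable $V_iW_i^{\omega}$ components, and Lemma \ref{uniformCB} bounds its Cantor--Bendixson rank uniformly by a pigeonhole/sequential-compactness argument together with the sparseness analysis (Proposition \ref{sparseequiv}, Lemma \ref{TFAE}). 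You instead propose a graph-directed self-similar decomposition $A_s=\bigcup_a\bigl(\tfrac1r A_{\delta(s,a)}+\tfrac ar\bigr)$ over the states of a prefix DFA and an induction over strongly connected components.

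The gap is exactly at that step, and it is the heart of the theorem. The commuting identity $(A_s)^{(k)}=\bigcup_a\bigl(\tfrac1r (A_{\delta(s,a)})^{(k)}+\tfrac ar\bigr)$ only controls one unfolding, i.e.\ a finite union; but for a state $s$ in a nontrivial SCC the relevant description of $A_s$ is an \emph{infinite} union of affine copies of the exit sets, indexed by how long a run lingers in the SCC before leaving, and these copies accumulate on the set of runs that never leave the SCC. Derivative does not commute with infinite unions, and the new limit points created by this accumulation are precisely what drives the rank up; your dichotomy ``no branching gives a single point / branching gives a perfect remainder'' is too coarse to capture this, since a single-cycle SCC with exits produces countable sets of intermediate rank (e.g.\ $\nu_r(u_1v_1^*u_2v_2^*\cdots u_dv_d^{\omega})$ has rank roughly $d$), and one must show these ranks compound only boundedly along the condensation. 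That bookkeeping is exactly what the paper's Lemma \ref{uniformCB} supplies, so your argument is incomplete until it is carried out (it can be, e.g.\ by bounding the rank contributed by the countable part in terms of the number of nontrivial SCCs a run can visit, using the form of sparse components); the fallback of simply invoking a finite-Cantor--Bendixson-rank fact ``in the spirit of \cite{CLR15}'' amounts to assuming the key lemma rather than proving it.
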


In fact, we prove a slightly more technical statement that works in higher arities, i.e.\ $A\subseteq [0,1]^d$; see Theorem Theorem \ref{hdimCantor} for the precise statement
Theorem \ref{tame} demonstrates the connection of fractal dimension, namely Hausdorff dimension, to definability of a more ``pathological'' set, in this case a Cantor set.
What makes a Cantor set ``pathological'' in this setting is that work of Hieronymi and Walsberg \cite{HW19} shows that the expansion of $(\R,<,+,0)$ by a Cantor set is not model-theoretically tame.

Some notions of Shelah-style tameness in the context of model theory include NIP (also known as \emph{not the independence property}) and NTP$_2$ (also known as \emph{not the tree property of the second kind}).
These notions correspond to combinatorial properties of formulas modulo a specific theory, or of the sets definable in the models of a given theory.
The property NIP for formulas corresponds to finiteness of V-C dimension, a notion of interest to some computer scientists.
For definitions of NIP and NTP$_2$ we refer the reader to \cite{S15}.

Let $\pi_i:\R^n \to \R$ denote the projection of $\vec{x} \in \R^n$ onto the $i$-th coordinate.
Connecting the work of Hieronymi and Walsberg with Theorem \ref{tame}, we obtain a theorem that solidifies the notion that for these structures, tameness in the model-theoretic sense and tameness in the sense of fractal geometry completely coincide.

\begin{theoremA}[Theorem \ref{r-reg tame}]\label{r-reg dichotomy}
For $A\subseteq [0,1]^d$ an $r$-regular set such that $d_H(\overline{\pi_i(A)})<1$ for all $i \in \{1,\ldots ,d\}$, the following are equivalent:
\begin{enumerate}
\item{$A$ is $r$-sparse;}
\item{$d_H(\overline{\pi_i(A)})=0$ for all $i\in \{1,\ldots ,d\}$;}
\item{$\calR_A$ is d-minimal;}
\item{The theory of $\calR_A$ has NIP;}
\item{The theory of $\calR_A$ has NTP$_2$.}
\end{enumerate}
\end{theoremA}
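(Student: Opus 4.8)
The plan is to prove the five conditions equivalent by establishing $(1)\Leftrightarrow(2)$, $(1)\Rightarrow(3)$, $(1)\Rightarrow(4)$, $(3)\Rightarrow(2)$, and $(5)\Rightarrow(2)$; since NIP theories are NTP$_2$, this closes the loop (indeed $(2)\Rightarrow(1)\Rightarrow(3)\Rightarrow(2)$ and $(1)\Rightarrow(4)\Rightarrow(5)\Rightarrow(2)\Rightarrow(1)$). The equivalence $(1)\Leftrightarrow(2)$ carries the automaton-theoretic content; the remaining implications are an assembly of Theorem~\ref{sparse rN}, Theorem~\ref{tame}, the tameness of $\calR_{r,\ell}$ established in \cite{vdD85}, and the wildness of expansions defining Cantor sets from \cite{HW19}.

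For $(1)\Leftrightarrow(2)$ I would argue combinatorially. Given $X\subseteq[0,1]^k$, let $P_n(X)$ denote the number of level-$n$ base-$r$ boxes $\prod_j[a_jr^{-n},(a_j+1)r^{-n}]$ that meet $X$; up to a bounded factor this is the number of length-$n$ prefixes of base-$r$ representations of points of $X$, so $X$ is $r$-sparse exactly when $P_n(X)$ is polynomially bounded in $n$. The two inputs I would use are: (a) for a closed $r$-regular $S\subseteq[0,1]$, one has $d_H(S)=\limsup_n \log P_n(S)/(n\log r)$; and (b) a gap law for $r$-regular languages — $P_n$ is either bounded by a polynomial or bounded below by $c^n$ for some $c>1$. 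Since projections of $r$-regular subsets of $[0,1]^d$ are $r$-regular, each $\overline{\pi_i(A)}$ is closed $r$-regular; and because every level-$n$ box meeting $A$ projects coordinatewise to level-$n$ intervals meeting the $\pi_i(A)$, while conversely every level-$n$ interval meeting $\pi_i(A)$ is such a projection, we get $P_n(\pi_i(A))\le P_n(A)\le\prod_iP_n(\pi_i(A))$. Thus $A$ is $r$-sparse iff every $\pi_i(A)$ is. If $(1)$ holds, each $P_n(\pi_i(A))$, hence (up to a bounded factor) each $P_n(\overline{\pi_i(A)})$, is polynomial, so $d_H(\overline{\pi_i(A)})=0$ by (a) — this is $(2)$. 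Conversely, if $(2)$ holds, each $P_n(\pi_i(A))$ is subexponential, hence polynomial by (b), so each $\pi_i(A)$, and therefore $A$, is $r$-sparse — this is $(1)$.

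For the model-theoretic implications: $(1)\Rightarrow(3)$ follows from Theorem~\ref{sparse rN}, which makes $\calR_A$ interdefinable over $\emptyset$ with some $\calR_{r,\ell}$, together with the d-minimality of $\calR_{r,\ell}$ from \cite{vdD85} and the invariance of d-minimality under interdefinability. For $(1)\Rightarrow(4)$ I would again pass to $\calR_{r,\ell}$: writing $r^{-\ell\N}=r^{\ell\Z}\cap(0,1]$ exhibits this predicate as definable in the NIP structure $(\R,<,+,\cdot,r^{\ell\Z})$ (the field analogue of the structure of \cite{vdD85}), so $\calR_{r,\ell}$ is a reduct of an NIP structure and is therefore NIP, and hence so is $\calR_A$; as NIP implies NTP$_2$, this also yields $(5)$. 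For $(3)\Rightarrow(2)$: a d-minimal structure defines no Cantor set, so by the contrapositive of Theorem~\ref{tame} together with the standing hypothesis $d_H(\overline{\pi_i(A)})<1$ we cannot have $0<d_H(\overline{\pi_i(A)})<1$ for any $i$, forcing $d_H(\overline{\pi_i(A)})=0$ throughout. Finally $(5)\Rightarrow(2)$, by contraposition: if $d_H(\overline{\pi_i(A)})\neq0$ for some $i$, the hypothesis gives $0<d_H(\overline{\pi_i(A)})<1$, and $\overline{\pi_i(A)}$ is a closed $r$-regular subset of $[0,1]$ definable in $\calR_A$ (coordinate projections and topological closures of definable sets are definable in any expansion of $(\R,<,+)$); hence Theorem~\ref{tame} produces a Cantor set definable in $\calR_A$, and then by \cite{HW19} the theory of $\calR_A$ is not model-theoretically tame — in particular has neither NIP nor NTP$_2$ — contradicting $(5)$.

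The main obstacle I anticipate is the combinatorial equivalence $(1)\Leftrightarrow(2)$, and specifically its two ingredients: identifying the Hausdorff dimension of a closed $r$-regular set with the exponential growth rate of its prefix counts, and the polynomial-versus-exponential dichotomy for the prefix growth of $r$-regular languages. The remaining bookkeeping — that sparsity and zero Hausdorff dimension pass through coordinate projections and topological closures, and that d-minimality, NIP, and NTP$_2$ are invariant under interdefinability and stable under passing to reducts — is routine, but should be carried out with some care in the interleaved base-$r$ encoding of $[0,1]^d$.
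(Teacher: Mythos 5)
Your assembly of the model-theoretic implications is essentially the paper's: sparsity gives d-minimality via Theorem \ref{sparse rN} (interdefinability with $\calR_{r,\ell}$ plus \cite{vdD85}), NIP implies NTP$_2$, and the failure direction is the same contrapositive through Theorem \ref{tame} and \cite{HW19}. Two points of genuine difference. First, for $(1)\Leftrightarrow(2)$ the paper does not use a dimension formula: it goes through Lemma \ref{dimH0} and Remark \ref{sparse-closure}, where the zero-dimension direction is the observation that a closed sparse regular set lies in $\Q^m$ (Lemma \ref{TFAE}), and the converse only needs that a non-sparse closed regular set contains a subset of the form $\nu_r(u\{a,b\}^{\omega})$, whose Hausdorff dimension is positive by the cited Theorem A of \cite{BlockGormanSchulz}; similarly, $(2)\Rightarrow$ sparsity of $A$ itself is run through countability of $\overline{\pi_1(A)}\times\cdots\times\overline{\pi_d(A)}$ rather than through box counts of $A$. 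Your route instead takes as input the full identity $d_H(S)=\limsup_n \log P_n(S)/(n\log r)$ for closed $r$-regular $S$ together with the polynomial-versus-exponential gap (the latter is the paper's Remark \ref{growthrmk}). That identity is true for closed $r$-regular sets (it is the kind of statement established in \cite{CLR15} and \cite{BlockGormanSchulz}), but it is a strictly stronger black box than needed and you leave it unproved, which you correctly flag as the main obstacle; note that for $(1)\Rightarrow(2)$ only the trivial inequality $d_H\le$ upper box dimension is used, so the nontrivial half is needed only where the paper's lighter ingredient (positive dimension of $\nu_r(u\{a,b\}^{\omega})$) already suffices. Second, your NIP step attributes too much to \cite{vdD85}: that paper does not prove NIP of $(\R,<,+,\cdot,r^{\Z})$; the dependence (NIP) of that structure is precisely the \cite{GH11} result the paper invokes for $(3)\Rightarrow(4)$, so your "reduct of an NIP field structure" argument is fine once you cite \cite{GH11} rather than \cite{vdD85}. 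With those two citations supplied, your proof goes through and buys a slightly more quantitative picture (dimension as prefix-growth rate), while the paper's route gets by with weaker external inputs.
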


Note that in this final theorem, we have dropped the assumption that $A$ is closed, illustrating how well the topological closures of these $r$-regular sets reflect or control the behavior of even the non-closed $r$-regular sets.

\section*{Acknowledgements}
We thank Rahim Moosa and Christa Hawthorne for many valuable contributions to this project and for many helpful comments. 
Many thanks also to Philipp Hieronymi for his insightful thoughts and ideas.
Thanks also to the anonymous referee for their useful comments and suggestions.

\subsection{Background on automata theory.}
Throughout this paper, all terminology, notation, and definitions relating to model theory are drawn from \cite{M02}, unless stated otherwise.
In recent years, there have been new applications of automata theory to the model theory of tame structures.
Uses of finite automata in recent works of model theorists include the introduction of $F$-sets by Moosa and Scanlon in \cite{MS02} and \cite{MS04} with applications to isotrivial Mordell-Lang in positive characteristic, which were shown to be recognized by finite automata in a particular sense by Bell and Moosa in \cite{BM19}.
Other applications include tameness results, namely stability, of the expansion of $(\Z,+)$ by a predicate for a subset of $\Z$ recognized by an automaton with a specific form of alphabet \cite{H20}.

The connections between model theory and automata theory have inspired the work in this paper centered around expansions of the real ordered additive group by sets recognized by B\"uchi automata.
Recent work in this area includes characterizing continuous functions in expansion of the real additive group by sets recognized by B\"uchi automata; in \cite{BG et al.20}, the authors show that a continuous real-valued function on a closed interval whose graph is recognized by a B\"uchi automaton must be locally affine.

The following facts and definitions about finite automata can all be found in \cite{S13}.
A \emph{finite automaton} is a 5-tuple $(Q, \Sigma, \delta, q_0, F)$ such that:
\begin{itemize}
\item{$Q$ is a finite set of states,
}
\item{$\Sigma$ is a finite alphabet,}
\item{$\delta:Q\times\Sigma \to Q$ is a transition function, or rather partial function; $(q_i, \alpha) \mapsto q_j$ or $(q_i, \alpha) \mapsto \emptyset$,
}
\item{$q_0$ is the initial state,}
\item{$F\subseteq Q$ is the set of accept states (also called final states).
}
\end{itemize}
If $\calA$ is a finite automaton and $w = \sigma_1\cdots \sigma_n$ is a finite string generated from $\Sigma$, i.e.,
 $\sigma_i \in \Sigma$ for each $i\leq n$, then a \emph{run} of $\calA$ on $w$ is a sequence of states $q_0, \ldots , q_{n}$ where each $q_i$ is a state in $\calA$, the state $q_0$ is the initial state, and for each $1\leq i\leq n$ we have $\delta(q_{i-1},\sigma_{i}) = q_{i}$.
We say that a finite automaton $\calA$ \emph{accepts} a finite string $w$ generated from $\Sigma$ precisely if a run of $\calA$ on $w$ terminates in an accept state, i.e. a state in $F$.

The definition above is of a \emph{deterministic} finite automaton.
We can generalize this definition to \emph{nondeterministic} finite automata by changing $\delta$ from a partial function $\delta:Q \times \Sigma \to Q$ to a full function on $Q \times \Sigma$ that takes values in $\calP(Q)$, the powerset of $Q$.
In other words, we allow $\delta: Q\times \Sigma \to \calP (Q)$ for the transition function.
The notion of a ``run'' is adapted accordingly.
Conveniently, for finite automata the class of deterministic finite automata and the class of nondeterministic finite automata are equivalent, in the sense that for each non-deterministic finite automaton there is a deterministic one that accepts precisely the same collection of words, and vice versa.

Given a finite alphabet $\Sigma$, we let $\Sigma^*$ denote all finite length strings over $\Sigma$.
Given a set $X$ of finite length strings, we let $X^*= \{x_1x_2\ldots x_n: x_i \in X, n\in \N\}$, where $x_ix_j$ denotes the concatenation of $x_i$ by $x_j$ on right.
We call $X^*$ the \emph{Kleene star} of $X$.
We call a subset $L \subseteq \Sigma^*$ a \emph{language} and we say that automaton $\calA$ \emph{recognizes} $L$ if for all $w \in \Sigma^*$, $\calA$ accepts $w$ if and only if $w \in L$.
Subsets of $\Sigma^*$ recognized by some finite automaton are then called \emph{regular languages}, and they are closed under the process of taking complements, taking finite unions and intersections, concatenation, and the Kleene star operation.

We will often conflate regular languages with what are called \emph{regular expressions}.
Regular expressions of a finite alphabet $\Sigma$ are a class of strings generated from $\Sigma$ via union, concatenation, and Kleene star.
In other words, the regular expressions on $\Sigma$ include the empty set $\emptyset$, the empty string $\varepsilon$, and each character $\sigma \in \Sigma$, as well as all expressions one can generate by starting with those symbols and iteratively applying the operations union, concatenation, and Kleene star.
For more details on finite-state automata, regular languages, and regular expressions, we refer the reader to \cite{AS}.

B\"uchi automata are still given by 5-tuples of the form $(Q, \Sigma, \delta, q_0, F)$ as defined above, the difference is that their inputs are strings from $\Sigma^{\omega} = \{ (\sigma_i)_{i \in \omega} : \sigma_i \in \Sigma\}$ rather than $\Sigma^*$, and the acceptance condition differs.
We define a run of a B\"uchi automaton $\calA$ on an element $w \in \Sigma^{\omega}$ in an analogous manner; a \emph{run} of $\calA$ on $w$ is a sequence of states $(q_i)_{i<\alpha}$ where $\alpha \in \mathbb{N}$ or $\alpha = \N$, and each $q_i$ is a state in $\calA$, the state $q_0$ is the initial state, and for each $i < \alpha$ we have $\delta(q_i,\sigma_i) = q_{i+1}$.
We say that a B\"uchi automaton $\calA$ accepts a string $w \in \Sigma^{\omega}$ precisely if a run of $\calA$ on $w$ includes an accept state infinitely often.
Note that in the definition of a run for B\"uchi automata, if $\alpha \neq \N$, then the string $w$ cannot be accepted, since the run terminates despite $w$ being an infinite string.

Since for B\"uchi automata the alphabet $\Sigma$ is still a finite set, the languages that B\"uchi automata recognize can all be viewed as subsets of a Cantor space, namely the space $\{0,\ldots ,n-1\}^{\omega}$ if $|\Sigma|=n$.
Now we can restate Definition \ref{r-reg1} more clearly, and for subsets of $[0,1]^m$.
Set $[r] := \{0, \ldots ,r-1\}$, so that $([r]^m)^{\omega}=\{\{0, \ldots ,r-1\}^m\}^{\omega}$.
For $(x_1, \ldots ,x_m) \subseteq [0,1]^m$, we say that $(w_{1,i},\ldots,w_{m,i})_{i<\N} \in ([r]^m)^{\omega}$ is an \emph{$r$-representation} of $(x_1, \ldots ,x_m)$ if for each $j \leq m$ the following holds: 
$$x_j = \sum_{i=0}^{\infty}\frac{w_{j,i}}{r^{i+1}}.$$

\begin{defn}
\label{r-regm}
For $m,r \in \N_{>0}$ with $r>1$, we say that $X \subseteq [0,1]^m$ is \emph{$r$-regular} if there is a B\"uchi automaton $\calA$ such that the following holds:
\begin{enumerate}
\item{for each $w \in ([r]^m)^{\omega}$, if $w$ is not an $r$-representation for any $x \in X$, then $\calA$ does not accept $w$;}
\item{for each $x \in X$, there exists a $w \in ([r]^m)^{\omega}$ such that $w$ is an $r$-representation of $x$ and $\calA$ accepts $w$.}
\end{enumerate}
If such an $\calA$ exists, we say that $\calA$ \emph{recognizes} $X$.
\end{defn}
Note that unlike with finite automata, reversing the accept and non-accept states of a B\"uchi automaton does not yield an automaton that recognizes the complement of the language that the original automaton recognized.
Just as there is a correspondence between finite automata and regular languages, there is an analogous correspondence between B\"uchi automata and regular $\omega$-languages.
We call a subset $L$ of $\Sigma^{\omega}$ an $\omega$-\emph{language}.  

We say that the B\"uchi automaton $\calA$ \emph{recognizes} $L$ if for all $w \in \Sigma^{\omega}$, $\calA$ accepts $w$ if and only if $w \in L$.
We say that $L \subseteq \Sigma^{\omega}$ is a \emph{regular $\omega$-language} if there is a B\"uchi automaton that recognizes $L$.
It is again true, though less easy to show, that regular $\omega$-languages are closed under the boolean operations union, intersection, and complementation.
Suppose that $V \subseteq \Sigma^*$ is a regular language and $L \subseteq \Sigma^{\omega}$ is a regular $\omega$-language.
Then the following are regular $\omega$-languages as well:
\begin{itemize}
\item{$VL = \{vw \in \Sigma^{\omega}: v \in V, w \in L\}$,}
\item{$V^{\omega} = \{v_1v_2v_3\ldots \in \Sigma^{\omega}: v_i \in V \}$.}
\end{itemize}

There is an important theorem of B\"uchi's that makes working with regular $\omega$-languages significantly easier.
We will use the following theorem frequently in this paper without direct reference.

\begin{thm}[\cite{B62}]\label{buchi}
For every $L \subseteq \Sigma^{\omega}$ recognized by a B\"uchi automaton, there are regular languages $V_1, \ldots ,V_k,W_1, \ldots ,W_k \in \Sigma^*$ such that 
$$L=\bigcup_{i=1}^k V_iW_i^{\omega}.$$
\label{thm:buchi}
\end{thm}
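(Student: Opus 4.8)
The plan is to run the standard state-splitting argument. Fix a B\"uchi automaton $\calA=(Q,\Sigma,\delta,q_0,F)$ recognizing $L$; the argument below only manipulates runs, so it is insensitive to whether $\delta$ is a (partial) function or set-valued. For each ordered pair $p,q\in Q$, let $L_{p,q}\subseteq\Sigma^*$ be the set of finite words $u$ on which $\calA$ admits a run that begins in state $p$ and ends in state $q$. Each $L_{p,q}$ is a regular language, since it is recognized by the finite automaton obtained from $\calA$ by relocating the start state to $p$ and declaring $q$ to be the unique accept state. Writing $\Sigma^{+}:=\Sigma^*\setminus\{\varepsilon\}$, set $L_{p,q}^{+}:=L_{p,q}\cap\Sigma^{+}$; as $\Sigma^{+}$ is regular and regular languages are closed under intersection, each $L_{p,q}^{+}$ is regular as well.

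The crux is the identity
\[
L=\bigcup_{f\in F}L_{q_0,f}\cdot\bigl(L_{f,f}^{+}\bigr)^{\omega}.
\]
Once this is established, the theorem follows by letting the pairs $(V_i,W_i)$ enumerate $(L_{q_0,f},L_{f,f}^{+})$ over $f\in F$ (and, in the degenerate case $F=\emptyset$, $L=\emptyset$, so one may take $k=1$, $V_1=\emptyset$, $W_1=\Sigma$). For the inclusion from right to left, suppose $w=u_0u_1u_2\cdots$ with $u_0\in L_{q_0,f}$ and $u_j\in L_{f,f}^{+}$ for all $j\ge 1$. Splicing a run of $\calA$ on $u_0$ from $q_0$ to $f$ together with runs on each $u_j$ from $f$ to $f$ produces a single run of $\calA$ on $w$; this run occupies state $f$ at the end of every block $u_j$, and since the $u_j$ with $j\ge 1$ are nonempty these block-boundary positions strictly increase, so the accept state $f$ is visited infinitely often and $w\in L$. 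For the reverse inclusion, take $w\in L$ and fix an accepting run $\rho=(\rho_0,\rho_1,\rho_2,\dots)$ with $\rho_0=q_0$. Since $Q$ is finite, some fixed $f\in F$ appears at infinitely many positions $0\le n_1<n_2<\cdots$ along $\rho$. Put $u_0:=w_0\cdots w_{n_1-1}$ (empty if $n_1=0$) and $u_j:=w_{n_j}\cdots w_{n_{j+1}-1}$ for $j\ge 1$; the segment of $\rho$ from position $0$ to $n_1$ witnesses $u_0\in L_{q_0,f}$, the segment from $n_j$ to $n_{j+1}$ witnesses $u_j\in L_{f,f}$, and $u_j$ is nonempty because $n_j<n_{j+1}$. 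Hence $w=u_0u_1u_2\cdots$ lies in $L_{q_0,f}\cdot(L_{f,f}^{+})^{\omega}$.

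The substance here is two bookkeeping disciplines rather than any deep difficulty, and these are the points I would be careful about. First, all blocks must be routed through a common state $f$, so that the per-segment runs glue into a legitimate run of $\calA$; this is why the decomposition is indexed by accept states, and why one cannot simply cut at arbitrary revisits of $F$. Second, the blocks $u_j$ with $j\ge 1$ must be forced to be nonempty, so that the concatenation is genuinely an infinite word and records infinitely many visits to $f$ — this is the role of passing from $L_{f,f}$ to $L_{f,f}^{+}$ (it also sidesteps any delicacy in interpreting $W^{\omega}$ when $\varepsilon\in W$). The only conceptual ingredient — that an infinite run meeting the finite set $F$ infinitely often must meet some single state of $F$ infinitely often — is an immediate pigeonhole observation, so I do not expect a genuine obstacle; the work is entirely in setting up $L_{p,q}$ and verifying the two inclusions cleanly.
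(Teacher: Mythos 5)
Your proof is correct: it is the standard state-pair decomposition argument for B\"uchi's theorem, and the paper does not prove this statement itself but simply cites it to B\"uchi \cite{B62}, so there is no divergence to report. Both inclusions are handled properly, including the two points you rightly flag (routing all blocks through a single accept state $f$ chosen by pigeonhole, and passing to $L_{f,f}^{+}$ to force the blocks to be nonempty so that the concatenation is a genuine infinite word witnessing infinitely many visits to $f$).
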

\noindent 
We note that the above decomposition of $L$ given in the statement of Theorem \ref{thm:buchi} is not unique.
\begin{defn}
Let $\Sigma$ be a finite alphabet and let $L$ be a subset of $\Sigma^{\omega}$ recognized by a B\"uchi automaton.  Given a decomposition of $L$ as a finite union of the form $VW^{\omega}$ as in Theorem \ref{thm:buchi}, we will call $VW^{\omega}$ a \emph{V-W component} of $L$.
\end{defn}

\begin{defn} 
Suppose that $\calA$ is a B\"uchi automaton.
\begin{enumerate}
\item{For states $p$ and $q$ in an automaton, we say $q$ is \emph{accessible} from $p$ if there exists a run of some non-trivial word from $p$ to $q$.}
\item{We say that $\calA$ is \emph{weak} if for any states $p$ and $q$ in $\calA$, whenever $p$ and $q$ are accessible to and from each other, then one is an accept state precisely if the other is an accept state.}

\item{We say that $\calA$ is \emph{trim} if each state is accessible from the start state and some accept state is accessible from each state (via a nontrivial path).}
\item{If $\calA$ is trim, we say that $\calA$ is \emph{closed} if every state is an accept state.}
\item{Let $\overline{\mathcal{A}}$ be the automaton resulting from making every state of $\mathcal{A}$ accepting.}
\end{enumerate}
\end{defn}

\begin{fact}[{\cite[Remark 59]{CLR15}}]
If the set $X\subseteq [0,1]$ is recognized by a B\"uchi automaton $\mathcal{A}$, then its closure in the order topology $\overline{X}$ is recognized by $\overline{\mathcal{A}}$.
\end{fact}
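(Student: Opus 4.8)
The plan is to pass to the valuation map $\operatorname{val}\colon [r]^{\omega}\to [0,1]$ given by $\operatorname{val}(w)=\sum_{i<\omega} w_i/r^{i+1}$, which is continuous when $[r]^{\omega}$ carries the product topology, and to exploit that $[r]^{\omega}$ is compact. Write $L$ and $L'$ for the $\omega$-languages recognized by $\mathcal{A}$ and $\overline{\mathcal{A}}$ respectively. By Definition \ref{r-reg1}, the hypothesis that $\mathcal{A}$ recognizes $X$ says exactly that $\operatorname{val}(L)=X$, and what must be shown — again unwinding Definition \ref{r-reg1} — reduces to the single statement $\operatorname{val}(L')=\overline{X}$: if this holds, then every string accepted by $\overline{\mathcal{A}}$ has value in $\overline{X}$, which gives the first clause of ``recognizes'', and every point of $\overline{X}$ equals $\operatorname{val}(w)$ for some $w$ accepted by $\overline{\mathcal{A}}$, which gives the second. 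I would also assume throughout that $\mathcal{A}$ is trim, replacing it by its trim part otherwise; this changes neither $X$ nor the substance of the statement. A useful consequence of trimness — proved by choosing for each state an accept state reachable from it by a nontrivial path, iterating, and using finiteness to force a repeat (hence an accept state lying on a cycle and reachable from every state) — is that from every state of $\mathcal{A}$ there is an infinite \emph{accepting} run.

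The first step is to describe $L'$. Since every state of $\overline{\mathcal{A}}$ is an accept state, $\overline{\mathcal{A}}$ accepts $w$ precisely when $\mathcal{A}$ admits some infinite run on $w$ starting at $q_0$, equivalently when every finite prefix $w_0\cdots w_{n-1}$ of $w$ labels a run from $q_0$ in $\mathcal{A}$ (the passage from ``all prefixes extend to runs'' to ``an infinite run exists'' is immediate in the deterministic case and is otherwise an instance of K\"onig's lemma, the tree of finite partial runs being finitely branching). For each fixed $n$ the condition ``$w_0\cdots w_{n-1}$ labels a run from $q_0$'' is clopen in $[r]^{\omega}$, so $L'$ is an intersection of clopen sets, hence closed, hence compact. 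Therefore $\operatorname{val}(L')$ is a compact, hence closed, subset of $[0,1]$. As every accepting run of $\mathcal{A}$ is in particular a run, $L\subseteq L'$, so $X=\operatorname{val}(L)\subseteq\operatorname{val}(L')$, and since $\operatorname{val}(L')$ is closed this yields $\overline{X}\subseteq\operatorname{val}(L')$.

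It remains to prove $\operatorname{val}(L')\subseteq\overline{X}$, and this is the one step that uses trimness in an essential way. Let $w\in L'$, with infinite run $q_0,q_1,q_2,\dots$ in $\mathcal{A}$. Fix $n$, set $u=w_0\cdots w_{n-1}$ (a string carrying $q_0$ to $q_n$), and choose, by trimness, an infinite accepting run of $\mathcal{A}$ starting at $q_n$, say on a string $v\in[r]^{\omega}$. Splicing these two runs produces an accepting run of $\mathcal{A}$ on $uv$, so $\operatorname{val}(uv)\in X$; and $|\operatorname{val}(w)-\operatorname{val}(uv)|\le r^{-n}$ since $w$ and $uv$ agree on their first $n$ digits. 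Letting $n\to\infty$ exhibits $\operatorname{val}(w)$ as a limit of points of $X$, so $\operatorname{val}(w)\in\overline{X}$. Together with the previous paragraph this gives $\operatorname{val}(L')=\overline{X}$, which is what was wanted.

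I expect the main obstacle to be not any hard analysis but the bookkeeping around trimness: the statement is genuinely false for non-trim $\mathcal{A}$ (a single non-accepting state with a self-loop is a counterexample), so one must be careful to reduce to the trim case and to justify the small combinatorial fact that trim B\"uchi automata admit an accepting run out of every state. Everything else is soft — compactness of $[r]^{\omega}$, continuity of $\operatorname{val}$, and the observation that the language of an all-accepting B\"uchi automaton is precisely the closed set of strings on which the automaton can run forever.
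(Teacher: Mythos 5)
The paper offers no proof of this Fact at all --- it is imported verbatim from \cite[Remark~59]{CLR15} --- so there is nothing internal to compare your argument against; judged on its own, your proof is correct and complete. Your route is the natural one: with every state accepting, the language $L'$ of $\overline{\mathcal{A}}$ is the set of words admitting an infinite run, which by K\"onig's lemma is the set of words all of whose prefixes label runs, hence a closed (compact) subset of $[r]^{\omega}$; continuity of the valuation then gives $\overline{X}\subseteq \nu_r(L')$, and trimness plus the splice-in-an-accepting-tail argument gives the reverse inclusion. Your insistence on trimness is exactly right: the Fact as displayed omits it, the paper only adds it later in passing (``Assuming the automata for our regular sets are trim, the closure of the automaton accepts the closure of the regular set''), and your one-state counterexample shows the hypothesis cannot be dropped. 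One bookkeeping point deserves to be made explicit rather than attributed to Definition \ref{r-reg1}: what you prove is $\nu_r(L')=\overline{X}$, i.e.\ recognition in the sense of Definition \ref{r-regm} (no spurious acceptances, and \emph{some} accepted representation of each point), whereas the literal reading of Definition \ref{r-reg1} would demand that $\overline{\mathcal{A}}$ accept \emph{every} $r$-representation of every point of $\overline{X}$. That stronger statement can genuinely fail, since for trim $\mathcal{A}$ the language of $\overline{\mathcal{A}}$ is the topological closure of $L$ in $[r]^{\omega}$ and may miss one of the two representations of a newly added limit point (e.g.\ for $X=\{1/2-2^{-n}:n\ge 2\}$ every accepted word begins with the digit $0$, so the representation $10^{\omega}$ of $1/2$ is never accepted). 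So your weaker reading is not just convenient but the only one under which the Fact is true, and it is the one the paper actually uses; with that clarification your write-up stands as a correct, self-contained proof of the cited remark.
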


Some of the earliest connections made between B\"uchi automata and model theory come from the work of Boigelot, Rassart, and Wolper in \cite{BRW98}.
In that extended abstract, the authors introduce the following ternary predicate.
\begin{defn}
Let $V_r(x,u,k)$ be a ternary predicate on $\mathbb{R}$ that holds precisely if $u=r^{-n}$ for some $n \in \mathbb{N}_{>0}$ and the $n$-th digit of a base-$r$ representation of $x$ is $k$.
\end{defn}
\noindent By examining the expansion of the real ordered additive group by this predicate, they prove the following definability result.
\begin{thm}[\cite{BRW98}]
A subset $X\subseteq [0,1]^n$ is $r$-regular if and only if $X$ is $\emptyset$-definable in $(\mathbb{R},<,0,+, V_r)$.
\end{thm}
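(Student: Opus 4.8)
The plan is to prove the two directions separately. The ``if'' direction --- every $\emptyset$-definable $X\subseteq[0,1]^n$ is $r$-regular --- I would handle by induction on the complexity of formulas in the language $\{<,0,+,V_r\}$, using the closure properties of regular $\omega$-languages. The ``only if'' direction --- every $r$-regular $X$ is $\emptyset$-definable --- I would handle by unwinding a recognizing B\"uchi automaton into a first-order formula with the help of $V_r$. Throughout I use freely that $1$ and the set $r^{-\N}=\{u:\exists x\,\exists k\ V_r(x,u,k)\}$ are $\emptyset$-definable in $(\R,<,0,+,V_r)$ (the former since $\max(r^{-\N})=r^{-1}$ and hence $1=r^{-1}+\dots+r^{-1}$), so that one may equivalently work in $(\R,<,+,0,1,r^{-\N},V_r)$ and, in particular, quantify over ``digit positions'' $r^{-m}$ and over which positions a property holds. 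It is also convenient, given an $r$-regular $X$, to first replace any recognizing automaton by one whose $\omega$-language is $\{w\in([r]^n)^{\omega}: w\text{ is an }r\text{-representation of a point of }X\}$, i.e.\ which accepts \emph{every} $r$-representation of every point of $X$ and nothing else; since the at most two representations of a real are related by a regular transformation, this normalized $\omega$-language is still regular.

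For the ``if'' direction I prove, by induction, that for every formula $\varphi(x_1,\dots,x_n)$ the set $\{\vec x\in[0,1]^n:(\R,<,0,+,V_r)\models\varphi(\vec x)\}$ is $r$-regular. The base cases are the atomic formulas, each realized by an explicit B\"uchi automaton over $[r]^n$: for $x_i<x_j$, an automaton comparing the two digit streams lexicographically and nondeterministically guessing the first position of disagreement (which also absorbs the non-uniqueness of $r$-representations); for $x_i+x_j=x_k$, the classical ``adder'' automaton reading digits most-significant-first, nondeterministically guessing the incoming carry at each step and verifying local consistency --- here carries propagate ``from infinity'' toward the more significant digits, and it is precisely the nondeterminism of B\"uchi automata that keeps this finite-state; for $V_r(x_i,x_j,x_k)$, an automaton that reads $x_j$ to locate the unique position $m$ with $x_j=r^{-m}$, checks that $x_k$ names a digit, and verifies that the $m$-th digit of $x_i$ is that digit. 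For the inductive step, disjunction corresponds to union of regular $\omega$-languages, negation (relative to the box $[0,1]^n$, i.e.\ to the $\omega$-language $([r]^n)^{\omega}$) to complementation, and existential quantification to projection --- forgetting one of the $n$ coordinates of the alphabet, again a place where nondeterminism is essential. The one genuine subtlety is that $\exists y$ ranges over all of $\R$, not over $[0,1]$; this is resolved by a relativization step --- splitting $y$ into an integer part and a fractional part in $[0,1)$ and rescaling by a definable power of $r$ --- reducing quantification over $\R$ to bounded quantification, so that the induction stays within the class of $r$-regular subsets of boxes.

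For the ``only if'' direction, let $\calA$ recognize $X\subseteq[0,1]^n$; by Theorem \ref{thm:buchi} its $\omega$-language is $\bigcup_{i=1}^k V_iW_i^{\omega}$, and since definable sets are closed under finite union it suffices to define, for a single component $VW^{\omega}$, the set $\{\vec x\in[0,1]^n:\text{some }r\text{-representation of }\vec x\text{ lies in }VW^{\omega}\}$. Fix deterministic automata for $V$ and $W$ and track along the digit stream the finite-valued ``progress data'': the state of the $V$-automaton until the first ``break point'', and thereafter, between consecutive break points, the element of the (finite) transition monoid of the $W$-automaton accumulated so far, reset at each break point, with a check that the accumulated element is accepting. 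Membership of the digit stream of $\vec x$ in $VW^{\omega}$ is then equivalent to the existence of an infinite set $P$ of break positions and an auxiliary stream carrying this progress data, subject to \emph{position-local} constraints together with the B\"uchi condition that $P$ be unbounded. I would encode $P$ and the progress stream as a single auxiliary real $y\in[0,1]$, packing the finitely many possible values at position $m$ into a fixed-length block of $r$-ary digits of $y$ --- so the encoding lives in base $r$ and no predicate $V_s$ with $s$ multiplicatively independent of $r$ is needed --- and extract it block-by-block using $V_r$. The local update rules, the acceptance checks, and ``infinitely often'' then become first-order statements quantifying over the positions $r^{-m}$, yielding the desired $\{<,0,+,V_r\}$-formula defining $X$.

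The main obstacle is this encoding in the ``only if'' direction: one must simulate an \emph{arbitrary} B\"uchi automaton purely through $+$ and $V_r$, which forces (a) coding all the auxiliary finite alphabets into base $r$ by blocking and reading them off with $V_r$; (b) turning the B\"uchi ``infinitely often'' condition into a bona fide first-order statement about digit positions; and (c) carrying the non-uniqueness of $r$-representations correctly through every construction --- harmless but omnipresent, here and in the atomic automata of the other direction, most delicately in the adder, where carries enter from the less significant end.
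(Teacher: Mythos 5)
The paper does not prove this statement; it is quoted from \cite{BRW98} as background, so there is no internal proof to compare against. Your outline is essentially the standard argument from that literature (in the B\"uchi--Bruy\`ere tradition, which is what Boigelot--Rassart--Wolper adapt to the reals): definable $\Rightarrow$ regular by induction on formulas, with explicit B\"uchi automata for the atomic relations (digit-wise comparison, an adder with nondeterministically guessed carries, and $V_r$) and the closure of regular $\omega$-languages under union, complementation and projection; regular $\Rightarrow$ definable by existentially quantifying auxiliary reals whose base-$r$ digit blocks encode a run (or, as you phrase it, the progress data through a $V W^{\omega}$ component from Theorem \ref{thm:buchi}), reading that encoding off with $V_r$, and rendering the B\"uchi acceptance condition as unboundedness of the marked positions. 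Both halves are sound as sketched, and your attention to the two-representations issue is appropriate.

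The one step that is thinner than the actual argument in \cite{BRW98} is your treatment of the unbounded quantifier $\exists y$ over $\R$. The proposed fix --- splitting off the integer part and ``rescaling by a definable power of $r$'' --- tacitly uses the relation ``$z=r^{-k}y$ and $u=r^{-k}$'' (equivalently, multiplication restricted to $r^{-\N}$), which is not atomic in the language $\{<,0,+,V_r\}$ and whose $\emptyset$-definability itself requires a digit-shifting argument (it is true, but at that point of the induction it is an unproved lemma, and deriving it from the theorem being proved would be circular). Boigelot--Rassart--Wolper sidestep this entirely by strengthening the induction hypothesis: they encode arbitrary reals (sign, integer part, separator, fractional part) and show every $\emptyset$-definable subset of $\R^n$ is recognized by such a ``real vector automaton,'' restricting to $[0,1]^n$ only at the end. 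Either repair works, but as written this relativization step is the only genuine gap in your proposal.
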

\noindent This theorem has given rise to many further applications of automata theory to model theory.

\subsection{Background on fractal dimensions.}
Finally, we will recall the definitions of Hausdorff measure and Hausdorff dimension for subsets of $\R$.
For $X \subseteq \R$ and $\varepsilon >0$, let $\calB_{\varepsilon}$ be the collection of all countable covers of $X$ by closed intervals of diameter at most $\varepsilon$.
We let $\Diam(U) \in \R \cup \{ \infty \}$ denote the diameter of $U \subseteq \R$.
Given a nonnegative real number $s$, we define the Hausdorff $s$-measure of $X \subseteq \R$ as follows: 
$$\mu^s_H(X) := \lim_{\varepsilon \to 0} \inf_{B \in \calB_{\varepsilon}}  \left\{ \sum_{U\in B} (\Diam U)^s  \right\}.$$
The \emph{Hausdorff dimension} of $X$ is written
$d_H(X)$, and is the unique $s \in \R_{\geq0}$ such that if $s'>s$, then $\mu^{s'}_H(X) = 0$ and if $s'<s$, then $\mu^{s'}_H(X) = \infty$.
Note that it is quite possible that $\mu^{s}_H(X) = 0$, and it is possible that $s=0$ (indeed, if $X$ is a single point, then $d_H(X)=0$).
If $X \subseteq \R^n$, the Lebesgue measure of $X$ is positive and finite if and only if $\mu_H^n(X)$ is positive and finite. 
Thus it is not possible that $\mu^{s}_H(X) = \infty$ for all $s>0$.

Hausdorff dimension behaves well with respect to subsets and unions, as shown by this fact:
\begin{fact}[\cite{F03}]
Let $X$, $Y$, and $(Y_i)_{i<\N}$ be subsets of $\R$.
    \begin{enumerate}
        \item If $X \subseteq Y$, then $d_H(X) \leq d_H(Y)$.
        \item If $X = Y_1 \cup Y_2$, then $d_H(X) = \max(d_H(Y_1), d_H(Y_2))$.
        \item If $X = \bigcup_{i<\N} Y_i$, then $d_H(X) = \max_{i< \N}\{d_H(Y_i)\}$.
    \end{enumerate}
\end{fact}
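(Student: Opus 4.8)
The plan is to reduce all three parts to two elementary properties of the Hausdorff $s$-measure $\mu^s_H$ for each fixed $s\geq 0$: \emph{monotonicity} (if $X\subseteq Y$ then $\mu^s_H(X)\leq\mu^s_H(Y)$) and \emph{countable subadditivity} ($\mu^s_H(\bigcup_i Y_i)\leq\sum_i\mu^s_H(Y_i)$). Granting these, each dimension inequality drops out of the defining threshold property of $d_H$: if $d_H(Z)=t$ then $\mu^{s'}_H(Z)=0$ for every $s'>t$, and conversely if $\mu^{s'}_H(Z)=0$ for every $s'>t$ then $d_H(Z)\leq t$ (otherwise choosing $t<s'<d_H(Z)$ would force $\mu^{s'}_H(Z)=\infty$).

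First I would establish monotonicity. Writing $\calB_{\varepsilon}(Z)$ for the family of countable covers of $Z$ by closed intervals of diameter at most $\varepsilon$, and $\mu^s_{H,\varepsilon}(Z):=\inf_{B\in\calB_{\varepsilon}(Z)}\sum_{U\in B}(\Diam U)^s$, note that $X\subseteq Y$ implies $\calB_{\varepsilon}(Y)\subseteq\calB_{\varepsilon}(X)$, so the infimum defining $\mu^s_{H,\varepsilon}(X)$ is taken over a larger family; hence $\mu^s_{H,\varepsilon}(X)\leq\mu^s_{H,\varepsilon}(Y)$, and letting $\varepsilon\to 0$ gives $\mu^s_H(X)\leq\mu^s_H(Y)$. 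Part (1) is now immediate: with $t=d_H(Y)$, any $s'>t$ satisfies $\mu^{s'}_H(X)\leq\mu^{s'}_H(Y)=0$, so $d_H(X)\leq t$.

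Next I would prove countable subadditivity at a fixed scale $\varepsilon$: given $\eta>0$, choose for each $i$ a cover $B_i\in\calB_{\varepsilon}(Y_i)$ with $\sum_{U\in B_i}(\Diam U)^s\leq\mu^s_{H,\varepsilon}(Y_i)+2^{-i}\eta$; then $\bigcup_i B_i$ is a countable cover of $\bigcup_i Y_i$ by intervals of diameter at most $\varepsilon$, so $\mu^s_{H,\varepsilon}(\bigcup_i Y_i)\leq\sum_i\mu^s_{H,\varepsilon}(Y_i)+\eta\leq\sum_i\mu^s_H(Y_i)+\eta$; as $\eta>0$ was arbitrary and $\mu^s_{H,\varepsilon}$ increases as $\varepsilon\downarrow 0$ (so that $\mu^s_H=\sup_{\varepsilon>0}\mu^s_{H,\varepsilon}$), taking the supremum over $\varepsilon$ yields $\mu^s_H(\bigcup_i Y_i)\leq\sum_i\mu^s_H(Y_i)$. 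For part (3), monotonicity gives $d_H(Y_i)\leq d_H(X)$ for all $i$, hence $\sup_i d_H(Y_i)\leq d_H(X)$; conversely, with $t=\sup_i d_H(Y_i)$, for any $s'>t$ every $\mu^{s'}_H(Y_i)=0$, so $\mu^{s'}_H(X)\leq\sum_i\mu^{s'}_H(Y_i)=0$ and thus $d_H(X)\leq t$. Therefore $d_H(X)=\sup_i d_H(Y_i)$, which is the stated $\max_{i<\N}$ whenever that supremum is attained --- in particular in the finite case, so part (2) is just the two-set instance of this argument.

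There is essentially no real obstacle here; the statement is routine and could simply be cited from \cite{F03}. The only step needing a little care is the exchange of the limit $\varepsilon\to 0$ with the countable sum in the subadditivity argument, which is legitimate precisely because $\mu^s_{H,\varepsilon}$ is monotone in $\varepsilon$, so $\mu^s_H$ is a supremum of the $\mu^s_{H,\varepsilon}$ and the supremum of a countable sum is bounded by the sum of the suprema. The only cosmetic caveat is that the ``$\max$'' in (3) should be read as ``$\sup$'' when the supremum over the $Y_i$ is not attained.
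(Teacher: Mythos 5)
Your proof is correct and is exactly the standard argument from the cited reference \cite{F03}; the paper itself gives no proof, simply citing Falconer, and your reduction to monotonicity and countable subadditivity of $\mu^s_H$ is the canonical route. Your caveat that the $\max$ in (3) should be read as a supremum (which need not be attained for a countably infinite union) is a fair and correct observation about the statement as written.
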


\subsection{Background on S-unit theory.}

In this brief section we give an overview of the theory of $S$-unit equations. Specifically, we require a quantitative version of a result due to Evertse, Schlickewei and Schmidt (see~\cite[Theorem~1.1]{ESS02} and also~\cite[Theorem~6.1.3]{EG}).  

\begin{definition} Let $L$ be a field and let  $a_1,\dots,a_m\in L$ be nonzero elements of $L$. Then a solution $x_1,\dots,x_m\in L$ to the equation
$$
a_1 x_1+\dots + a_m x_m = 1
$$
is called \emph{nondegenerate} if the left side has no non-trivial vanishing subsums; i.e., if $\displaystyle \sum_{i \in I} a_i x_i \neq 0$ for each non-empty subset $I$ of $\{ 1, \dots, m \}$.  In general, we will say a sum $\sum_{i=1}^m x_i$ is \emph{nondegenerate} if it has no non-trivial vanishing subsums.
\end{definition}

The following theorem is usually referred as the $S$-unit theorem.

\begin{thm}\label{thm:unit}
  Let $L$ be a field of characteristic zero, let
  $a_1,\dots,a_m$ be nonzero elements of $L$, and let $H\subset(L^\times)^m$ be a finitely generated  multiplicative group.
  Then there are only finitely many nondegenerate solutions $(y_1,\dots,y_m)\in H$ to the equation
  $a_1y_1+\dots+a_my_m=1$.
\end{thm}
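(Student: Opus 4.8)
The plan is to obtain Theorem~\ref{thm:unit} as an immediate qualitative consequence of the quantitative form of the Evertse--Schlickewei--Schmidt theorem recalled just above (\cite[Theorem~1.1]{ESS02}, \cite[Theorem~6.1.3]{EG}). That quantitative statement already allows the ground field to be an arbitrary field of characteristic zero and the ambient multiplicative group $\Gamma\subseteq (L^\times)^m$ to be of \emph{finite rank}, and it yields an explicit upper bound on the number of nondegenerate solutions of $a_1x_1+\dots+a_mx_m=1$ with $(x_1,\dots,x_m)\in\Gamma$ that depends only on $m$ and $\operatorname{rank}(\Gamma)$. So the whole content of the proof is the reduction ``finitely generated $\Rightarrow$ finite rank,'' together with a check that the hypotheses match up verbatim.

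Concretely, I would first record that a finitely generated abelian group has finite rank: if $H$ is generated by $h^{(1)},\dots,h^{(k)}\in(L^\times)^m$, then the images $h^{(j)}\otimes 1$ span $H\otimes_{\Z}\Q$, so $r:=\operatorname{rank}(H)=\dim_{\Q}(H\otimes_{\Z}\Q)\le k<\infty$. Thus $H$ is a subgroup of $(L^\times)^m$ of finite rank $r$. Since $L$ has characteristic zero and $a_1,\dots,a_m\in L^\times$, the quantitative theorem applies directly with $\Gamma=H$, bounding the number of nondegenerate solutions $(y_1,\dots,y_m)\in H$ of $a_1y_1+\dots+a_my_m=1$ by a finite quantity (of the shape $\exp((6m)^{3m}(r+1))$), which is exactly the assertion of Theorem~\ref{thm:unit}.

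There is essentially no obstacle once the quantitative input is taken as a black box: all of the genuine difficulty --- the Schmidt subspace theorem and its refinements by Schlickewei and others --- is packaged inside \cite{ESS02}. The one place to be mildly careful is the bookkeeping of definitions: one should confirm that the notion of ``nondegenerate'' used here (no nontrivial vanishing subsum of $a_1y_1,\dots,a_my_m$) coincides with the one in the quantitative statement, which it does. (If one instead had only a number-field version of the bound in hand, the standard route would be to replace $L$ by the finitely generated subfield generated by the $a_i$ and the coordinates of $h^{(1)},\dots,h^{(k)}$, write that field as a finite extension of $\Q(t_1,\dots,t_d)$, and specialize $t_1,\dots,t_d$ to algebraic numbers along a Lang--N\'eron specialization that is injective on $H$ and nonvanishing on the finitely many nonzero subsums attached to any prescribed finite collection of solutions; the stated quantitative result makes this detour unnecessary.)
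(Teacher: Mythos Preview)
Your approach is correct and matches the paper's treatment: the paper does not give an independent proof of Theorem~\ref{thm:unit} but simply cites \cite[Theorem~6.1.3]{EG} (noting that the case $m=1$ is immediate), which is precisely the quantitative result you invoke.

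Two small bookkeeping points are worth tightening. First, the quantitative version as stated in the paper (Theorem~\ref{thm:eg}) takes $\Gamma\le K^*$ and counts solutions in $\Gamma^n$, not solutions in an arbitrary finite-rank subgroup of $(K^*)^n$; so you cannot literally set $\Gamma=H$. The fix is the obvious one: let $\Gamma\le L^\times$ be generated by all coordinates of your generators $h^{(1)},\dots,h^{(k)}$, so that $\Gamma$ is finitely generated of rank at most $mk$ and $H\subseteq\Gamma^m$. Second, Theorem~\ref{thm:eg} is stated only for $n\ge 2$, so you should dispose of $m=1$ separately (there is at most one solution $y_1=a_1^{-1}$), exactly as the paper remarks.
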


(See Theorem~6.1.3 in~\cite{EG}---although this theorem is only stated for $m\ge 2$, the case $m=1$ is immediate.)


To show that sparse $k$- and $\ell$-regular sets have finite intersection when $k$ and $\ell$ are multiplicatively independent, we will also use the following theorem of Evertse, Schlickewei, and Schmidt (2002) (see \cite[Theorem 6.1.3]{EG}).

\begin{thm} Let $K$ be a field of characteristic zero, let $n\ge 2, r\ge 1$, let $\Gamma\le K^*$ be a finitely generated subgroup of rank $r$, and let $a_1,\ldots ,a_n$ be nonzero elements of $K$.  Then the number of nondegenerate solutions to $$a_1x_1+\cdots +a_n x_n= 1$$ with $(x_1,\ldots ,x_n)\in\Gamma^n$ is at most $\exp((6n)^{3n}(r+1))$.
\label{thm:eg}
\end{thm}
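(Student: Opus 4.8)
This is the quantitative $S$-unit theorem of Evertse, Schlickewei, and Schmidt, and in practice one simply invokes \cite{ESS02} or \cite[Theorem~6.1.3]{EG}; reconstructing the full argument is well beyond the needs of the present paper. The overall architecture, however, runs as follows. The first step is to reduce from an arbitrary field $K$ of characteristic zero to a number field. Writing $\Gamma = \langle \zeta\rangle \times \langle \gamma_1,\dots,\gamma_r\rangle$ with $\zeta$ a root of unity and $\gamma_1,\dots,\gamma_r$ multiplicatively independent, one works inside the finitely generated field $K_0 = \Q(\zeta,\gamma_1,\dots,\gamma_r,a_1,\dots,a_n)$, and notes that the solution set in $\Gamma^n$ is unchanged by passing between $K$ and $K_0$. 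By the qualitative $S$-unit theorem (Theorem~\ref{thm:unit}) there are only finitely many nondegenerate solutions, so one may choose a specialization of a transcendence basis of $K_0/\Q$ that is generic enough to keep those finitely many solutions pairwise distinct, to keep each $a_i$ nonzero, to keep $\gamma_1,\dots,\gamma_r$ multiplicatively independent, and to keep every relevant subsum of every solution nonzero. This places us in a number field carrying a group of rank $r$ with at least as many nondegenerate solutions, so it suffices to prove the bound there.

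In the number field case one fixes a finite set of places $S$ (including the archimedean ones) large enough that $\Gamma\subseteq\mathcal{O}_S^*$ and $a_1,\dots,a_n\in\mathcal{O}_S^*$, and partitions the solutions according to a suitable projective height $H(\mathbf x)$ of the point $(a_1x_1:\dots:a_nx_n:-1)$. The ``small'' solutions, those with $H(\mathbf x)$ below an explicit threshold, are counted by a lattice-point estimate: under the logarithmic embedding $\Gamma/\Gamma_{\mathrm{tors}}\hookrightarrow\R^r$ the coordinates of a low-height solution correspond to lattice points in a bounded region, so their number is at most something of the shape $c^r$. The ``large'' solutions are handled by the quantitative (parametric) Subspace Theorem: a large solution forces $\mathbf x$ into the solution set of a Diophantine inequality, which is contained in a bounded union of proper linear subspaces of $K^n$. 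Inside each such subspace a nontrivial relation $\sum c_i x_i = 0$ lets one eliminate a variable from $\sum a_i x_i = 1$; because the original equation is nondegenerate, the reduced equation is, after regrouping coefficients, a nondegenerate equation in fewer variables whose solutions still lie in a rank-$r$ group, so the argument closes by induction on $n$, the base case $n=1$ having at most one solution.

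The genuine obstacle is to keep the final bound depending only on $n$ and the rank $r$, and \emph{not} on $|S|$ --- passing to $\mathcal{O}_S^*$ typically makes $|S|$ huge relative to $r$, so a naive appeal to the quantitative Subspace Theorem (whose subspace count grows with $|S|$) is not good enough. The point is that, since the coordinates lie in the rank-$r$ group $\Gamma$ rather than in the full group of $S$-units, the valuations of the $x_i$ at places of $S$ --- and hence the $S$-adic contributions to the heights appearing in the Subspace Theorem inequality --- are governed by only $r$ parameters; exploiting this lets one extract a number of subspaces, and therefore an inductive bound, that is independent of $S$. Tracking the thresholds and the subspace counts carefully through the induction on $n$ is what produces the explicit value $\exp((6n)^{3n}(r+1))$.
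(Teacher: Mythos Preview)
Your proposal is correct and matches the paper's treatment: the paper does not prove this theorem at all but simply cites it as \cite[Theorem~6.1.3]{EG} (the Evertse--Schlickewei--Schmidt result), exactly as you note in your first sentence. The remainder of your write-up is a reasonable high-level outline of the original argument (specialization to a number field, small/large dichotomy, quantitative Subspace Theorem with induction on $n$, and the key point that dependence is on $r$ rather than $|S|$), which goes well beyond what the paper itself supplies but is accurate as exposition.
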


\section{Preliminaries on sparsity}

Let $r \in \N_{>1}$, and set $[r]:=\{0,1, \ldots ,r-1\}$ for the remainder of this paper.
Note that we will often switch between elements of $[0,1]$ and their $r$-representations, and may sometimes say that an automaton $\calA$ accepts \emph{the} $r$-representation of $x \in [0,1]$.
For the countable subset of $[0,1]$ whose elements have multiple (i.e. exactly two) $r$-representations, we mean that $\calA$ accepts \emph{at least one} of the $r$-representations of $x$.
For ease of switching between $x$ and its $r$-representation, we will define a valuation for elements of $[r]^{\omega}$.

\begin{defn}
Define $\nu_r:[r]^{\omega} \to [0,1]$ by:
$$\nu_r(w) = \sum_{i=0}^{\infty} \frac{w_i}{r^{i+1}}$$
where $w=w_0w_1w_2\ldots$ with $w_i \in [r]$ for each $i \in \N$.
\end{defn}

Note that the equivalence relation $v \equiv w \iff \nu_r(v) = \nu_r(w)$ induces equivalence classes of size at most two.
As noted above, only countably many elements in $[r]^{\omega}$ are not the unique element of their $\nu_r$-equivalence class.
For $L \subseteq [r]^{\omega}$ an $\omega$-language, we set $\nu_r(L):=\{ \nu_r(w):w \in L\}$.
If $L \subseteq ([r]^m)^{\omega}$, set
$$\nu_r(L):=\{ (\nu_r(w_1), \ldots ,\nu_r(w_m)): w_1, \ldots,w_m \in [r]^{\omega}, (w_{1,i},\ldots ,w_{m,i})_{i<\omega} \in L\}.$$

We can now expand Definition \ref{sparse defn} to languages in $([r]^m)^{\omega}$.

\begin{defn} We say that $L \subseteq ([r]^m)^{\omega}$ is \emph{sparse} if it is regular (as an $\omega$-language) and the set of length-$n$ prefixes of the elements of $L$ grows at most polynomially in $n$. 

We say that $X\subseteq [0,1]^m$ is \emph{$r$-sparse} if $X = \nu_r(L)$ for some sparse $L \subseteq ([r]^m)^{\omega}$, and we say that a B\"uchi automaton is \emph{sparse} if the language that it recognizes is sparse.
\end{defn}

There is a dichotomy for the set of length-$n$ prefixes of an $r$-regular $\omega$-language: either the set of length-$n$ prefixes grows polynomially in $n$, or it grows exponentially in $n$ (see Remark \ref{growthrmk}).

Suppose $L \subseteq ([r]^m)^{\omega}$ is $r$-regular.
We now use a characterization of sparseness from \cite{BM19} to show that if $L$ is sparse, then $L$ is a finite union of sets of the following form:
\begin{equation}\label{sparse form}
u_1v_1^* u_2v_2^*\ldots u_{d-1}v_{d-1}^* u_d v_d^{\omega}
\end{equation}
where $u_i,v_i \in ([r]^m)^{*}$ for all $i\leq d$, and $v_1, \ldots ,v_d$ are non-empty strings.


\begin{prop}\label{sparseequiv}
A language $L \subseteq ([r]^m)^{\omega}$ is sparse if and only if $L$ is a finite union of sets of the form (\ref{sparse form}).
\end{prop}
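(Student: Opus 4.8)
The plan is to prove both directions by translating between automaton-theoretic growth conditions and the combinatorial shape of regular expressions. For the easy direction, suppose $L$ is a finite union of sets of the form $u_1 v_1^* u_2 v_2^* \cdots u_{d-1} v_{d-1}^* u_d v_d^\omega$. By Fact 2(3) it suffices to bound prefix growth of a single such set. A length-$n$ prefix of an element of $u_1 v_1^* \cdots u_d v_d^\omega$ is determined by how many times each $v_i$ has been traversed (and a bounded ``partial'' position inside one $v_i$ or inside $v_d$). Since the total length consumed is $|u_1| + e_1 |v_1| + \cdots + e_{d-1}|v_{d-1}| + |u_d| + (\text{partial})$ with $e_i \geq 0$, the tuple $(e_1,\dots,e_{d-1})$ is constrained to lie in a bounded region of a lattice intersected with $\{e : \sum e_i |v_i| \leq n\}$; the number of such tuples, times the bounded number of partial positions, is $O(n^{d-1})$ — polynomial in $n$. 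Hence $L$ is sparse.

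For the converse, suppose $L$ is sparse, i.e. regular with polynomially-bounded prefix growth. First I would invoke the cited characterization of sparseness from \cite{BM19}: there the relevant notion (``sparse'' regular languages of finite words) is shown to be equivalent to being a finite union of languages of the form $w_0 x_1^* w_1 x_2^* w_2 \cdots x_k^* w_k$ with the $x_i$ nonempty. The idea is to reduce the $\omega$-language case to the finite-word case via B\"uchi's theorem (Theorem \ref{thm:buchi}): write $L = \bigcup_{i=1}^k V_i W_i^\omega$. Polynomial prefix growth of $L$ forces polynomial growth on each component, and one checks this forces polynomial prefix growth of the \emph{finite} regular language $V_i W_i^*$ (or more carefully, of $V_i$ and of $W_i$ individually — since if $W_i$ had exponential prefix growth as a language of finite words, the reachable-and-returning structure of $W_i$ would inject exponentially many distinct prefixes into $L$). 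Applying the \cite{BM19} characterization to $V_i$ and to $W_i$ writes each as a finite union of products of the ``starred'' form; then $V_i W_i^\omega$ expands into a finite union of expressions $u_1 v_1^* \cdots u_{d-1} v_{d-1}^* u_d W_i^\omega$. The last step is to absorb the tail: $u_d W_i^\omega$, where $W_i = \bigcup_j (\text{starred products})$, must be rewritten so that the infinitely-iterated part is a single nonempty string $v_d$; here one uses that an $\omega$-power of a finite union can be re-expressed, at the cost of a finite union, with each term of the form $(\text{prefix}) \cdot s^\omega$ for a single word $s$ (pumping inside the strongly connected component that is visited infinitely often).

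The main obstacle I expect is precisely this last reduction — controlling the $\omega$-iterated tail. The set $W_i^\omega$ need not itself be of the sparse form as written (a union inside a Kleene-$\omega$ is genuinely different from a Kleene-$\omega$ of a single word), so one must argue at the level of the automaton: in a trim automaton recognizing a sparse $L$, the prefix-growth bound implies every strongly connected component has at most one cycle through it up to rotation (otherwise two ``independent'' cycles at a state give $\geq 2^{n/c}$ distinct prefixes of length $n$), and an accepting run eventually stays in one such component, cycling a single primitive word $v_d$. This ``at most one cycle per SCC'' structural fact is really the engine behind the whole proposition, and it is what lets both the finite parts $u_i v_i^*$ and the $\omega$-tail $v_d^\omega$ appear with single nonempty words rather than unions. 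I would state and prove that structural lemma first (polynomial prefix growth $\iff$ no state lies on two ``independent'' cycles, equivalently the automaton is what \cite{BM19} might call of bounded cycle-rank one per component), and then the decomposition into the form (\ref{sparse form}) follows by reading off the SCC-DAG of the automaton, with the $u_i$'s being the acyclic connectors and the $v_i^*$, $v_d^\omega$ recording passage through successive single-cycle components.
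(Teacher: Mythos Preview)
Your backward direction is correct and is exactly the paper's argument unpacked: the paper just observes that length-$n$ prefixes of $u_1v_1^*\cdots u_dv_d^\omega$ coincide with those of $u_1v_1^*\cdots u_dv_d^*$ and cites \cite{BM19} for polynomial growth of the latter, which is your $O(n^{d-1})$ count.

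For the forward direction, you and the paper agree through the B\"uchi decomposition $L=\bigcup V_iW_i^\omega$ and the use of \cite{BM19} to force each $V_i$ into sparse form. You correctly isolate the tail $W_i^\omega$ as the real issue, but the paper resolves it differently and more directly than your SCC argument: if $W_i^*$ contains two distinct words $a\neq b$ of equal length then $\{a,b\}^*\subseteq W_i^*$ injects exponentially many prefixes into $V_iW_i^\omega$, so $W_i^*$ has at most one word per length; then a combinatorics-on-words fact (Corollary~6.2.5 of \cite{L02}) gives $W_i^*\subseteq c^*$ for a single nonempty word $c$, whence $W_i^\omega=c^\omega$. This stays entirely inside the B\"uchi decomposition and never touches an automaton for $L$.

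Your alternative---showing that polynomial prefix growth forces each nontrivial SCC of a trim automaton for $L$ to be a single cycle, so that accepting runs terminate in a single-word $\omega$-loop---is also valid and is essentially the structural lemma behind the \cite{BM19} characterization you invoke. But as written you are straddling two routes: you set up $V_iW_i^\omega$ via B\"uchi and then switch to the automaton for $L$ to handle the tail, when neither the $V_i$ nor the $W_i$ are tied to that automaton. The SCC argument is cleanest if you drop B\"uchi's theorem entirely and read the whole decomposition (connectors $u_i$, intermediate loops $v_i^*$, and terminal loop $v_d^\omega$) directly off the SCC-DAG of a trim B\"uchi automaton for $L$; that is self-contained and avoids importing the Lothaire result. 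Conversely, if you keep the B\"uchi decomposition, the Lothaire fact dispatches the tail in one line with no automaton-level reasoning. Either route works; just commit to one.
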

\begin{proof}
First suppose that $L$ is sparse. By Theorem \ref{buchi}, $L$ is a finite union of sets of the form $VW^{\omega}$ where $V,W \subseteq ([r]^m)^*$ are regular, and so it suffices to show that each V-W component of $L$ has the form given in 
(\ref{sparse form}). 

We suppose towards a contradiction that this is not the case for some V-W component $VW^{\omega}$. Then using the characterization of sparseness from Proposition 7.1 in \cite{BM19}, we see that either $V$ contains a sublanguage of the form $u\{a,b\}^*v$ with $a$ and $b$ distinct words of the same length, or $W^*$ contains at least two distinct words of the same length. In both cases we see that the number of distinct length-$n$ prefixes of $VW^{\omega}$ grows exponentially with $n$, contradiction the fact that $L$ is sparse. It follows that $V$ is sparse and by Corollary 6.2.5 in \cite{L02} that $W^*$ is a regular sublanguage of $c^*$ for some non-trivial word $c$.  Thus $W^{\omega}=c^{\omega}$ and by the characterization of sparse languages in 
Proposition 7.1 in \cite{BM19} we see that $VW^{\omega}$ has the desired form.

Conversely if $L$ is a finite union of sets of the form (\ref{sparse form}), then the length $n$ prefixes are the same as those of $u_1v_1^* u_2v_2^*\ldots v_{d-1}^* u_d v_d^*$, which by \cite{BM19} grow polynomially in $n$, as desired.

\end{proof}

Assuming the automata for our regular sets are trim, the closure of the automaton accepts the closure of the regular set. 
\begin{rmk} \label{rmk:samelength} In Proposition \ref{sparseequiv}, we may further refine and assume that $|v_1|=|v_2|=\cdots =|v_d|$ in each set our union comprises. The reason for this is that if we let 
$N$ denote the least common multiple of $|v_1|,\ldots ,|v_d|$, then we may replace each $v_i$ by $v_i^{N/|v_i|}$ and replace each $u_i$ by $u_i v_i^{a_i}$ where $a_1,\ldots ,a_d$ vary over elements of the set $$\{0,\ldots , N/|v_1|-1\}\times \cdots \times \{0,\ldots ,N/|v_d|-1\}.$$
\end{rmk}
\begin{rmk}\label{sparse-closure}
For $r$-sparse subsets of $[0,1]^m$, the closures of a set of the form $\nu_r(u_1v_1^* u_2v_2^*\ldots v_{d-1}^* u_d v_d^{\omega})$ with $v_1,\ldots ,v_d$ non-empty words is given by the finite union $$\nu_r(u_1v_1^*\ldots u_dv_d^{\omega})\cup \nu_r(u_1v_1^*\ldots u_{d-1}v_{d-1}^{\omega})\cup\cdots\cup \nu_r(u_1v_1^{\omega}).$$
In particular, the closure of a sparse regular set is again a sparse regular set.
\end{rmk}

\begin{rmk} \label{sparse-description0} If one does a computation analogous to the one done in \cite[Remark 3.4]{AB1}, we see that a set of the form 
$\nu_r(u_1v_1^* u_2v_2^*\ldots v_{d-1}^* u_d v_d^{\omega})$ with $v_1,\ldots ,v_d$ non-empty words has the form
\begin{equation} \label{eq: form0}
\left\{ c_0  + c_1 r^{-\delta_1 n_1} + c_2 r^{-\delta_1 n_1 - \delta_{2} n_{2}}\cdots + c_{d-1} r^{-\delta_1 n_1-\cdots -\delta_{d-1} n_{d-1}} \colon n_1,\ldots ,n_{d-1}\ge 0\right\},\end{equation}
where $c_0,\ldots ,c_{d-1}$ are elements of $\mathbb{Q}^m$ and $\delta_1,\ldots ,\delta_{d-1}$ are positive integers with $\delta_i=|v_i|$ for $i=1,\ldots ,d-1$.
\end{rmk}

For the remainder of this section, we restrict our attention to the set of closed $r$-regular subsets of $[0,1]^m$.
The characterization of non-sparse regular languages from \cite{BM19} implies that if $X\subseteq [0,1]^m$ is closed but not $r$-sparse, then $X$ contains a set of the form $Y = \nu_r(u \{a,b\}^{\omega})$, i.e., a translate of a Cantor set.
It is not clear \textit{a priori} that $Y$ is definable in the language of ordered additive groups expanded by a predicate for $X$.

The following lemma will be crucial for a subsequent characterization of expansions of the real ordered additive group by a predicate for an $r$-regular set, among other things.

\begin{lem}\label{TFAE}
Suppose that $X$ is a closed r-regular subset of $[0,1]^m$.
The following are equivalent:
\begin{enumerate}
\item $X$ is $r$-sparse;
\item $X\subseteq \Q^m$;
\item $X$ is countable;
\item the set of accumulation points of $X$ is countable.
\end{enumerate}
\begin{proof}
Let $L\subseteq ([r]^m)^{\omega}$ be such that $\nu_r(L) = X$.
For (1)$\implies$(2), we observe that since $X$ is $r$-sparse, by Proposition \ref{sparseequiv} it is a finite union of sets of the form $\nu_r(u_1v_1^*\ldots u_{n}v_{n}^{\omega})$. 
Hence each element of $L$ is tail-equivalent to one of finitely many sequences of the form $v_n^{\omega}$.
Recall that an $r$-representation of a real number is eventually periodic if and only if that number is rational.
Since $v_n^{\omega}$ is by construction a periodic $r$-representation, anything element of $([r]^m)^{\omega}$ that is tail-equivalent to it is eventually periodic as well, and thus in $\Q^m$.

It is obvious that (2)$\implies$(3).
For (3)$\implies$(4), since $X$ is closed, it contains all of its accumulation points. 
So as a subset of a countable set the accumulation points of $X$ are countable as well.
Finally, for (4)$\implies$(1), if the set of accumulation points of $X$ is countable, then there is no subset of $L$ of the form $\nu_r (u\{a,b\}^*wv^{\omega})$ with $a,b,u,v,w \in ([r]^m)^*$ and $|a|=|b|$ but $a \neq b$.
If there were, the set $\nu_r(u \{a,b\}^{\omega})$ would be a subset of the set of accumulation points of $X$.
So we deduce by Proposition \ref{sparseequiv} that $X$ is $r$-sparse.
\end{proof}
\end{lem}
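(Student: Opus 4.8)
The plan is to prove the cyclic chain $(1)\Rightarrow(2)\Rightarrow(3)\Rightarrow(4)\Rightarrow(1)$, keeping fixed throughout a regular $\omega$-language $L\subseteq([r]^m)^\omega$ with $\nu_r(L)=X$ (such an $L$ exists, e.g.\ the language recognized by a trim representative of $\overline{\calA}$).

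For $(1)\Rightarrow(2)$: if $X$ is $r$-sparse, then Proposition \ref{sparseequiv} writes $X$ as a finite union of sets $\nu_r(u_1v_1^*u_2v_2^*\cdots u_dv_d^\omega)$ with each $v_i$ non-empty. Every word generated by such an expression has the form $u_1v_1^{n_1}\cdots u_{d-1}v_{d-1}^{n_{d-1}}u_dv_d^\omega$, whose tail $v_d^\omega$ is periodic; hence each of its $m$ coordinate sequences is eventually periodic, and a real number whose base-$r$ expansion is eventually periodic is rational. So $X\subseteq\Q^m$. The implication $(2)\Rightarrow(3)$ is immediate, since $\Q^m$ is countable. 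For $(3)\Rightarrow(4)$ the point is to use that $X$ is closed: then every accumulation point of $X$ lies in $X$, so the set of accumulation points of $X$ is a subset of the countable set $X$ and is therefore countable. (Closedness really is needed here, since the accumulation points of $\Q$ are all of $\R$.)

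The content is in $(4)\Rightarrow(1)$, which I would establish by contraposition. Assume $X$ is not $r$-sparse, so $L$ is not sparse. As recorded just before the lemma, the characterization of non-sparse regular $\omega$-languages from \cite{BM19} (Proposition 7.1 there, applied exactly as in the proof of Proposition \ref{sparseequiv}) then guarantees that, after passing to a trim representative and using the closure facts, $L$ contains a sublanguage of the form $u\{a,b\}^* w v^\omega$ with $u,v,w,a,b\in([r]^m)^*$, $v$ non-empty, $|a|=|b|$ and $a\neq b$; equivalently, $X$ contains the translated Cantor set $Y:=\nu_r(u\{a,b\}^\omega)$. It then remains to see that $Y$ lies inside the set of accumulation points of $X$. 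Fix $y=\nu_r(us)\in Y$ with $s\in\{a,b\}^\omega$. For each finite prefix $p$ of $s$ lying in $\{a,b\}^*$, the word $u\,p\,w\,v^\omega$ belongs to $L$, so $\nu_r(u\,p\,w\,v^\omega)\in X$; as $p$ exhausts $s$ these points converge to $y$, and since $a\neq b$ and only countably many pairs of distinct infinite words share a $\nu_r$-value, all but finitely many of them differ from $y$. Hence $y$ is an accumulation point of $X$, so $Y$ is contained in the set of accumulation points of $X$; as $Y$ is uncountable, so is that set, which is the contrapositive of $(4)\Rightarrow(1)$.

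I expect the main obstacle to be the step just described: extracting from the \cite{BM19} criterion a genuinely branching sublanguage of $L$ itself — rather than merely of one of the auxiliary regular languages $V,W$ appearing in a B\"uchi decomposition — and then cleanly verifying that the resulting Cantor set sits inside the accumulation points, in particular controlling the countably many collisions of $\nu_r$ so that the approximating sequence truly avoids its limit. The closure bookkeeping (Remark \ref{sparse-closure} together with the closure facts for trim automata) is what I would use to keep that argument honest, while the three remaining implications are essentially formal.
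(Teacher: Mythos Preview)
Your proposal is correct and follows essentially the same cyclic route $(1)\Rightarrow(2)\Rightarrow(3)\Rightarrow(4)\Rightarrow(1)$ as the paper, invoking Proposition~\ref{sparseequiv} for $(1)\Rightarrow(2)$ and the \cite{BM19} branching criterion for the contrapositive of $(4)\Rightarrow(1)$. The only difference is one of detail: where the paper simply asserts that $\nu_r(u\{a,b\}^\omega)$ sits inside the accumulation points of $X$, you supply an explicit approximating sequence $\nu_r(u\,p\,w\,v^\omega)\in X$; a slightly shorter alternative is to note that $Y=\nu_r(u\{a,b\}^\omega)\subseteq X$ is an uncountable compact set, hence has uncountably many non-isolated points, each of which is already an accumulation point of $X$ --- this sidesteps the $\nu_r$-collision bookkeeping you flagged.
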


\begin{rmk}\label{growthrmk}
As a corollary of Lemma \ref{TFAE}, it is immediate that for any $r$-regular $\omega$-language $L \subseteq ([r]^m)^{\omega}$, either the size of the set of length-$n$ prefixes of words in $L$ eventually is bounded by a polynomial in $n$, or eventually grows exponentially.
\end{rmk}

\begin{fact}[\cite{CHM02}]\label{syndetic} If $X \subseteq [0,1]$ is an uncountable $r$-regular set, then there exists $d \in \N$ such that $d\cdot X:=X+X+\cdots+X$ contains a closed interval.
\end{fact}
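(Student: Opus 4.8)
The plan is to find inside $X$ an affine copy of a self-similar Cantor set coming from a two-block digit pattern, and then to show that the $d$-fold sumset of such a set fills an interval once $d$ is large enough, by an elementary greedy-expansion argument.

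First I would reduce to the self-similar case. Write $X=\nu_r(L)$ with $L=\bigcup_{i=1}^n V_iW_i^{\omega}$ in B\"uchi normal form (Theorem~\ref{thm:buchi}). Since $\nu_r$ is at most two-to-one and each $V_i\subseteq [r]^*$ is countable, $X$ being uncountable forces $W_i^{\omega}$ to be uncountable for some $i$; fix such an $i$, put $V=V_i$, $W=W_i$, and assume (harmlessly) that every word of $W$ is nonempty. If all words of $W$ pairwise commuted, then by the Lyndon--Sch\"utzenberger theorem they would all be powers of a common primitive word $c$, whence $W^{\omega}=\{c^{\omega}\}$ would be a singleton; so there are $x,y\in W$ with $xy\neq yx$. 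Set $a=xy$ and $b=yx$, so that $|a|=|b|=:\ell$, $a\neq b$, and $\{a,b\}^{\omega}\subseteq W^{\omega}$ (any concatenation of blocks $a,b$ is a concatenation of blocks $x,y$). Choosing any $v\in V$ gives $v\{a,b\}^{\omega}\subseteq VW^{\omega}\subseteq L$, hence $Y:=\nu_r(v\{a,b\}^{\omega})\subseteq X$.

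Next I would make $Y$ explicit. With $\eta=r^{-\ell}\in(0,1)$, and writing $\alpha,\beta$ for the numbers $\sum_{j<\ell}a_jr^{-j-1}$ and $\sum_{j<\ell}b_jr^{-j-1}$ determined by the two $\ell$-blocks (so $\gamma:=|\beta-\alpha|>0$, since distinct base-$r$ blocks of equal length give distinct numbers), a direct computation gives
\[
\nu_r(\{a,b\}^{\omega})=\frac{\alpha}{1-\eta}+\gamma K,\qquad K:=\Big\{\textstyle\sum_{i\geq 0}\varepsilon_i\eta^i:\varepsilon_i\in\{0,1\}\Big\},
\]
and prepending $v$ is an affine change of coordinates, so $Y=t_0+\gamma'K$ for some $t_0\in\R$ and $\gamma'>0$. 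Hence $d\cdot X\supseteq d\cdot Y=dt_0+\gamma'(K+\cdots+K)$ ($d$ summands), and one checks that the $d$-fold sumset is exactly $K_d:=\{\sum_{i\geq 0}n_i\eta^i:n_i\in\{0,\ldots,d\}\}$ (for a prescribed $(n_i)$, let the $j$-th copy use digit $1$ in positions $i$ with $n_i\geq j$). Finally, $K_d=[\,0,\,d/(1-\eta)\,]$ whenever $d\geq r^{\ell}-1$: the inclusion $\subseteq$ is clear, and for $\supseteq$, given $y$ in this interval one greedily picks $n_0\in\{0,\ldots,d\}$ with $(y-n_0)/\eta\in[\,0,\,d/(1-\eta)\,]$ — the admissible $n_0$ form an interval of length $\eta d/(1-\eta)\geq 1$ meeting $\{0,\ldots,d\}$ — and iterates, using $\eta^N\to 0$. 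Thus for any $d\geq r^{\ell}-1$ the set $d\cdot X$ contains $dt_0+\gamma'[\,0,\,d/(1-\eta)\,]$, a nondegenerate closed interval.

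The step I expect to be the crux is the first reduction. The "contains a translate of a Cantor set" fact recalled in the excerpt is stated for \emph{closed} $r$-regular sets, and here it does not suffice to know only that $\overline{X}$ contains such a set: a set whose closure contains an interval need not itself contain one, and passing to sumsets does not remedy this. So the real content is extracting a genuine self-similar subset $Y\subseteq X$ directly from the normal form of $L$, which is what the $xy$-versus-$yx$ argument accomplishes; everything after that is routine.
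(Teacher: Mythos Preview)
The paper does not supply its own proof of this statement: it is recorded as a \emph{Fact} with a bare citation to \cite{CHM02} and used as a black box. Your argument is a correct, self-contained proof, so there is nothing in the paper to compare it against line by line.

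A couple of comments on the approach. Your diagnosis of the crux is exactly right: the cited result of Cabrelli--Hare--Molter is about sums of (general) Cantor sets, so the honest content specific to the $r$-regular setting is precisely the extraction of a genuine self-similar subset $Y\subseteq X$, not merely $Y\subseteq\overline{X}$. The $xy$/$yx$ trick from combinatorics on words does this cleanly: it produces two distinct equal-length blocks $a,b$ with $\{a,b\}^{\omega}\subseteq W^{\omega}$ directly, whereas the paper's surrounding discussion (just before the Fact) invokes the sparse/non-sparse dichotomy only for \emph{closed} sets to get $u\{a,b\}^{\omega}\subseteq L$. Your route avoids that closure hypothesis entirely.

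The second half---reducing to the digit set $K=\{\sum_{i\ge 0}\varepsilon_i\eta^i:\varepsilon_i\in\{0,1\}\}$ and showing $K_d=[0,d/(1-\eta)]$ once $d\ge r^{\ell}-1$ by a greedy choice of digits---is an elementary substitute for the general machinery in \cite{CHM02}; in the present homogeneous setting your argument is shorter and gives an explicit threshold $d=r^{\ell}-1$. One small tightening: the sentence ``the admissible $n_0$ form an interval of length $\eta d/(1-\eta)\ge 1$ meeting $\{0,\ldots,d\}$'' is correct but deserves a word of justification that the intersection with $[0,d]$ still contains an integer; the cleanest way is to note that $n_0=0$ works when $y\le \eta d/(1-\eta)$, $n_0=d$ works when $y\ge d$, and otherwise $[y-\eta d/(1-\eta),y]\subseteq(0,d)$ already has length $\ge 1$.
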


The above fact means that there is another characterization of sparseness: a closed $r$-regular subset $X$ of $[0,1]$ is sparse if and only if for all $d\ge 1$ the following set
$$d\cdot X :=\{x_1+\cdots +x_d \colon x_1,\ldots ,x_d\in X\}$$ 
does not have interior.

Finally, we establish the connection between Hausdorff dimension and closed, $r$-sparse subsets of $[0,1]^m$.

\begin{lem}\label{dimH0}
If $X$ is a closed regular subset of $[0,1]^m$, then $X$ is $r$-sparse if and only if $d_H(\pi_i(X))=0$ for all $i \leq m$.
\end{lem}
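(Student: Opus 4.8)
The plan is to reduce everything, via Lemma~\ref{TFAE}, to a statement about Hausdorff dimension: it suffices to show that for a closed $r$-regular $X \subseteq [0,1]^m$, we have $d_H(\pi_i(X)) = 0$ for all $i \leq m$ if and only if $X$ is countable (equivalently, contained in $\Q^m$). One direction is essentially free: if $X$ is $r$-sparse, then by Remark~\ref{sparse-closure} and Proposition~\ref{sparseequiv}, $X$ is a finite union of sets of the form \eqref{eq: form0}, and each projection $\pi_i(X)$ is then a finite union of sets of the same form in dimension one; such a set is parametrized by finitely many natural-number exponents $n_1,\dots,n_{d-1}$, hence is countable, and a countable set has Hausdorff dimension $0$. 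Using the finite-union behavior of $d_H$ (the Fact from \cite{F03}), $d_H(\pi_i(X)) = 0$.

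For the converse, I would argue contrapositively: suppose $X$ is closed $r$-regular but not $r$-sparse. By Lemma~\ref{TFAE}, $X$ is uncountable, and by the discussion preceding that lemma (the characterization of non-sparse regular languages from \cite{BM19}), $X$ contains a set of the form $Y = \nu_r(u\{a,b\}^\omega)$ with $a \neq b$ words of the same length $k$ over $[r]^m$. This $Y$ is a translate-and-scale of a self-similar Cantor set: writing $\alpha = \nu_r$ of the eventually-periodic strings, $Y$ is the attractor of the two contractions $x \mapsto r^{-k}x + p_a$ and $x \mapsto r^{-k}x + p_b$ on $\R^m$, where $p_a, p_b \in \Q^m$ come from $a$ and $b$. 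Since $a \neq b$, these two maps have distinct fixed points, so by the open set condition this Cantor set has Hausdorff dimension $\log 2 / (k \log r) > 0$. Now I need to push this dimension down to some coordinate projection: since $a \neq b$, there is a coordinate $i$ and a digit position $j < k$ at which the $j$-th entries of $a$ and $b$ differ, say by value $c \neq 0$. Then $\pi_i(Y)$ itself contains the Cantor set generated by $x \mapsto r^{-k}x + q_a$ and $x \mapsto r^{-k}x + q_b$ on $\R$ with $q_a \neq q_b$ (the $i$-th coordinates of $p_a, p_b$, which differ because the digit strings differ in the $i$-th coordinate), again satisfying the open set condition, so $d_H(\pi_i(Y)) = \log 2/(k\log r) > 0$. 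Since $Y \subseteq X$, monotonicity of $d_H$ gives $d_H(\pi_i(X)) > 0$, completing the contrapositive.

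The main obstacle I anticipate is the bookkeeping in the "push down to a projection" step: one must be careful that the two digit blocks $a, b$, even though distinct as elements of $([r]^m)^k$, could in principle project to the \emph{same} string in coordinate $i$ for a given $i$ — but since they are distinct, they must differ in \emph{some} coordinate $i$, and for that $i$ the projected blocks are distinct length-$k$ strings over $[r]$, which is all that is needed to get two distinct fixed points and hence positive-dimensional attractor after projecting. A second, more technical point is justifying the open set condition (or a suitable separation condition) for the IFS: since the two blocks have the same length $k$, the two image intervals $r^{-k}[0,1] + q_a$ and $r^{-k}[0,1] + q_b$ have length $r^{-k}$ and their left endpoints differ by at least $r^{-k}$ (they differ in a base-$r$ digit at some position $< k$), so after possibly passing to a sub-self-similar-set one gets the required separation; alternatively one can cite a standard lower bound for the dimension of the set of reals whose base-$r$ expansions are built by freely concatenating two fixed distinct blocks. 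I would also remark, as the paper does elsewhere, that the subtlety of non-unique $r$-representations is harmless here because it affects only countably many points and hence does not change any Hausdorff dimension.
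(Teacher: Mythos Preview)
Your proof is correct and follows the same overall architecture as the paper: both directions reduce to Lemma~\ref{TFAE}, the ``sparse $\Rightarrow$ countable projections $\Rightarrow$ dimension $0$'' direction is handled identically, and the converse is argued contrapositively by locating a sublanguage $u\{a,b\}^{\omega}$ with $a\neq b$ and then finding a coordinate $i$ where $a_i\neq b_i$ so that $\pi_i(X)$ contains $\nu_r(u_i\{a_i,b_i\}^{\omega})$.

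The one genuine difference is in how the positive Hausdorff dimension of $\nu_r(u_i\{a_i,b_i\}^{\omega})$ is established. The paper simply invokes \cite[Theorem~A]{BlockGormanSchulz} as a black box. You instead give a direct, self-contained argument via the iterated function system $\{x\mapsto r^{-k}x+q_{a_i},\ x\mapsto r^{-k}x+q_{b_i}\}$ and the open set condition, obtaining the explicit value $\log 2/(k\log r)$. Your verification of the open set condition is correct: since $a_i\neq b_i$ as length-$k$ words over $[r]$, the integers $[a_i]_r$ and $[b_i]_r$ differ by at least $1$, so $|q_{a_i}-q_{b_i}|\geq r^{-k}$, and the open images of $(0,1)$ under the two maps are disjoint subintervals of $(0,1)$. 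Your hedge about ``possibly passing to a sub-self-similar set'' is therefore unnecessary. The upshot is that your route is more elementary and avoids an external citation, at the cost of a short IFS computation; the paper's route is terser but relies on a companion paper.
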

\begin{proof}
Suppose that $X \subseteq [0,1]^m$ is recognized by the automaton $\calA$.
First suppose that the set of words that $\calA$ accepts is not sparse.  In particular, it includes a subset of the form $u\{a,b\}^*wv^{\omega}$ with $a \neq b$ and $|a|=|b|$.
Then the automaton $\calA$ must also accept all words in $u\{a,b\}^{\omega}$, due to the fact that in a closed automaton all states with a path to itself are accept states. 
There exists some $i\leq m$ such that the words in coordinate $i$ are elements of a set of the form $u_i\{a_i, b_i\}^{\omega}$ with $a_i\neq b_i$.

Let $X_i\subseteq \R$ denote the set recognized by the ``projection'' of the automaton onto the $i$-th coordinate, which we will call $\calA_i$. Then $X_i$ contains the set 
$Y:=\nu_r(u_i\{a_i, b_i\}^{\omega})$, and so the Hausdorff dimension of $X_i$ is at least as large as that of $Y$. Hence it suffices to show that $Y$ has strictly positive Hausdorff dimension.  This follows immediately from \cite[Theorem A]{BlockGormanSchulz}.

Next suppose that $X$ is sparse.
Then $X \subseteq \Q^m$ by Lemma \ref{TFAE}.
Since Hausdorff dimension is an increasing function with respect to the subset relation, and Hausdorff dimension of projections of $\Q^m$ are all zero, we are done.
\end{proof}
\section{$r$-sparse sets and d-minimality}

The first step toward proving Theorem \ref{sparse rN} is establishing the $\emptyset$-definability of each $r$-sparse subset $X$ of $[0,1]^m$ in $\calR_{r,\ell} =(\R, <, +, 0, 1, r^{-\ell \N})$ for some $\ell \in \N$.
In fact, we can take this $\ell$ to be the least common multiple of $\{|u_{1}|,|v_{1}|, \ldots ,|u_{d}|,|v_{d}| \}$, where the words $u_1,v_1, \ldots ,u_d,v_d$ witness that $X$ is a finite union of languages of the form (\ref{sparse form}).


\begin{rmk}\label{rmk:33}
If $k$ is a divisor of $\ell$, then $r^{-k\N}$ is definable in $r^{-\ell \N}$.  In particular, $r^{-\N}$ is definable in $r^{-\ell \N}$.
\end{rmk}

\begin{prop}\label{sparseDmin}
Every $r$-sparse set $A \subseteq [0,1]^m$ is $\emptyset$-definable in $\calR_{r, \ell}$ for some $\ell \in \N$.\end{prop}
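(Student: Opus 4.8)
The plan is to reduce, using the structural results already in hand, to the case of a single ``building block'' of the form (\ref{sparse form}), and then to write down for that block an explicit existential formula in the language of $\calR_{r,\ell}$: the predicate $r^{-\ell\N}$ will encode a weakly decreasing chain of powers of $r^{-\ell}$, and the ``affine combination'' part of the description of the block will be captured by the $\emptyset$-definable rational scalar multiplications available in the ordered divisible abelian group $(\R,<,+,0,1)$.

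First I would invoke Proposition \ref{sparseequiv} to write $A$ as a finite union of sets of the form (\ref{sparse form}), and let $\ell$ be the least common multiple of the lengths of all the (nonempty) words $u_i,v_i$ occurring in such a representation; in particular every loop word $v_i$ that appears has $|v_i|$ dividing $\ell$. Next I would apply the construction of Remark \ref{rmk:samelength} but with $N$ taken to be this $\ell$ (a common multiple of all the $|v_i|$): replacing each $v_i$ by $v_i^{\ell/|v_i|}$ and each $u_i$ by $u_i v_i^{a_i}$ and taking the resulting finite union over the tuples $(a_1,\dots,a_d)$, one still exhibits $A$ as a finite union of sets of the form (\ref{sparse form}), now with the extra feature that inside each block every loop word has length \emph{exactly} $\ell$. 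Since a finite union of $\emptyset$-definable sets is $\emptyset$-definable, it then suffices to handle one such block $S$.

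So fix a block $S=\nu_r(u_1v_1^*u_2v_2^*\cdots u_{d-1}v_{d-1}^*u_dv_d^{\omega})$ with $|v_1|=\cdots=|v_{d-1}|=\ell$. By Remark \ref{sparse-description0} (putting $\delta_i=\ell$ for all $i$ and writing $P_k:=n_1+\cdots+n_k$ in the formula displayed there), $S$ has the form
$$S=\Bigl\{\, c_0+\sum_{k=1}^{d-1} c_k\, r^{-\ell P_k} \;:\; 0\le P_1\le P_2\le\cdots\le P_{d-1}\,\Bigr\},$$
where $c_0,\dots,c_{d-1}\in\Q^m$ (the case $d=1$ being the trivial one in which $S=\{c_0\}$ is a single rational point). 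I claim $S$ is then $\emptyset$-definable in $\calR_{r,\ell}$ by the formula
$$\vec{x}\in S \iff \exists z_1\cdots\exists z_{d-1}\,\Bigl[\,\bigwedge_{k=1}^{d-1} z_k\in r^{-\ell\N}\ \wedge\ \bigwedge_{k=1}^{d-2} z_{k+1}\le z_k\ \wedge\ \vec{x}=c_0+\sum_{k=1}^{d-1}c_k z_k\,\Bigr].$$
Indeed, an element of $r^{-\ell\N}$ is precisely some $r^{-\ell n}$ with $n\ge 0$, and a tuple $z_1,\dots,z_{d-1}\in r^{-\ell\N}$ is weakly decreasing exactly when the corresponding exponents are weakly increasing, so the $z_k$ range over precisely the tuples $(r^{-\ell P_1},\dots,r^{-\ell P_{d-1}})$ with $0\le P_1\le\cdots\le P_{d-1}$. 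The conditions ``$z_k\in r^{-\ell\N}$'' are atomic in $\calR_{r,\ell}$, and the equation ``$\vec{x}=c_0+\sum_k c_k z_k$'' is $\emptyset$-definable already in $(\R,<,+,0,1)$: each coordinate of each $c_j$ is a fixed rational, every rational is $\emptyset$-definable, and for a fixed $q=a/b\in\Q$ the relation $y=qz$ is expressed by $by=az$ via iterated addition, so rational scalar multiplication is $\emptyset$-definable. Hence $S$, and therefore $A$, is $\emptyset$-definable in $\calR_{r,\ell}$.

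The only genuinely delicate point is the second step: one must ``synchronize the clocks'' $\delta_i=|v_i|$ appearing in Remark \ref{sparse-description0} so that every power of $r$ that shows up is a power of $r^{-\ell}$, and this is exactly what the $N=\ell$ version of Remark \ref{rmk:samelength} accomplishes. Without it one is left with nested exponents $\delta_1 n_1+\cdots+\delta_k n_k$ with the $\delta_i$ distinct, and expressing ``$z_k/z_{k-1}$ is a power of $r^{-\delta_k}$'' for $z_{k-1},z_k\in r^{-\N}$ appears to require iterating the map $y\mapsto r^{\delta_k}y$, which is not available in the additive language. Everything after the synchronization is routine bookkeeping with the divisible ordered abelian group structure.
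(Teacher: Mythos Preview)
Your proof is correct and follows essentially the same route as the paper's: reduce to a single block of the form (\ref{sparse form}), synchronize the loop-word lengths via Remark \ref{rmk:samelength}, invoke the parametric description of Remark \ref{sparse-description0} with all $\delta_i$ equal to a common $\ell$, and then express the block by an existential formula quantifying over a weakly ordered tuple in $r^{-\ell\N}$. The only cosmetic difference is that the paper allows each block its own $\delta$ and afterwards appeals to Remark \ref{rmk:33} to pass to a common $\ell$, whereas you fix a single global $\ell$ at the outset; both maneuvers are equivalent.
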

\begin{proof}
By Remarks \ref{rmk:samelength} and \ref{sparse-description0}, $A$ is a finite union of sets of the form
\begin{equation}
    \label{eq:AA}
\left\{ c_0  + c_1 r^{-\delta n_1} + c_2 r^{-\delta n_1 - \delta n_{2}}\cdots + c_{d-1} r^{-\delta n_1-\cdots -\delta n_{d-1}} \colon n_1,\ldots ,n_{d-1}\ge 0\right\},\end{equation} where $d\ge 1$, $\delta$ is a positive integer and $c_0,\ldots ,c_{d-1}$ are rational vectors with $c_1,\ldots ,c_{d-1}$ nonzero.

Observe that by Remark \ref{rmk:33}, if $A_1,\ldots ,A_s$ are sets that are respectively definable in $\calR_{r, \ell_1},\ldots \calR_{r, \ell_s}$, then they are all definable in $\calR_{r, \ell_1\cdots \ell_s}$ and so it suffices to show the result when $A$ is a set of the form given in (\ref{eq:AA}). 

Notice that if we take $\ell=\delta$ then we can define the above set by the rule
$$\exists x_1, x_2,\ldots ,x_{d-1}\in r^{-\delta \mathbb{N}}(z= c_0 + c_1 x_1+\cdots + c_{d-1} x_{d-1}) \wedge (x_1\le x_2\le \cdots \le x_{d-1}).$$

\end{proof}

We note that the above result is optimal in the sense that if $A$ is an $r$-sparse set definable in $\calR_{r,\ell}$ with $\ell>1$, in general $A$ will not also be definable in $R_{r,m}$ for some $1\leq m < \ell$.
To see this, let $T_r:=Th(\R,<,+,0,r^{-\N})$.
Consider $\tilde{\calR}:=(R,<,+,0, D) \models T_r$  where $\R \subseteq R$ and $D$ is the interpretation of $r^{\N}$ in $\tilde{\calR}$, and define $\lambda:R \to R$ as follows:
$$t \mapsto \begin{cases}
1, & t \geq 1 \\
0, & t \leq 0 \\
\min ([t,1) \cap D), & t \in (0,1).\\
\end{cases} $$
Effectively, the function $\lambda$ maps any element $t$ of $(0,1)$ to the smallest element of $D$ greater than $t$, and  all other positive $t$ get mapped to $1$, while all non-positive $t$ are sent to $0$.
Note that all models of $T_r$ are interdefinable with a model of $T_{\lambda}:=Th(\R,<,0,+, \lambda)$ that has the same underlying set, and vice versa.
Observe that $T_{\lambda}$ is axiomized by the axioms of ordered divisible abelian groups plus the following axioms, which are adapted from \cite{MT06}:
\begin{enumerate}[(i)]
\item $s\leq t \rightarrow \lambda (s) \leq \lambda (t)$,
\item $t \geq 1 \rightarrow \lambda(t)=1$,
\item $t \leq 0 \rightarrow \lambda (t)=0$,
\item $t \in (0,1) \rightarrow \frac{1}{r}\lambda(t) < t \leq \lambda (t)$,
\item $\frac{1}{r}\lambda(t)<s \leq \lambda(t) \rightarrow \lambda(s) = \lambda(t)$,
\item $t \in [0,1] \rightarrow (\lambda(t)=t \leftrightarrow \lambda (\frac{t}{r})=\frac{t}{r})$.
\end{enumerate}

Note that these axioms are a bit redundant, but it will be helpful to have them written this way.
Below, let $x\ll1$ denote the statement $\forall r \in \R (0<x<r)$, i.e. the property that $x$ is infinitesimal with respect to the real numbers.
For a subset $X$ of an expansion of $\R$, let $X^{\ll 1}$ denote the set of infinitesimal elements in $X$ (with respect to $\R$).

Recall that for a set $A \subseteq \R$ and a language $\calL$, we define $\dcl_{\calL} (A)$ to be the set of all $r \in \R$ such that the singleton set $\{r\}$ is $\calL$-definable with parameters in $A$.

\begin{lem}\label{lambdaQE}
Suppose that $\calS:=(S, <,+,0,\lambda)$ is a $|\R|$-saturated elementary extension of the structure $\calR_r=(\R,<,+,0, \lambda) \models T_{\lambda}$. 
Let $\calL=\{<,+,0\}$, and let $\calL_{\lambda}=\{<,+,0,\lambda \}$ where the interpretation of $\lambda(\R)$ in $\calR_r$ is $r^{-\N}\cup \{0\}$.
Suppose $b$ and $b'$ are in $\lambda(S)$ and satisfy the same $\calL$-type over $\R$.  Then $b$ and $b'$ satisfy the same $\calL_{\lambda}$-type over $\R$.
\begin{proof}

To prove the claim, it suffices to show that if $e:S \to S$ is an $\calL$-elementary map that fixes $\R$ pointwise and maps $b$ to $b'$, then $e$ is also an $\calL_{\lambda}$-elementary map.
Note that any $\calL$-embedding $e:S \to S$ is also an $\calL$-elementary map because the reduct of $\calS$ to $\calL$ has quantifier elimination.
By Blum's criterion for quantifier elimination (Theorem 3.1.4 in \cite{M02}), to show quantifier elimination for $T_{\lambda}$ it suffices to show the following:
if $\calA=(A, <,+,0,\lambda), \calB=(B, <,+,0,\lambda)  \models T_{\lambda}$ and $\calS$ is an $|A|^+$-saturated elementary extension of $\calA$, then for any $b \in B$ there is $b' \in S$ such that there is an $\calL_{\lambda}$ embedding $e:B \to S$ of $\dcl (A \cup \{b\})$ into $\calS$ that maps $A$ to $A$ pointwise, maps $b$ to $b'$, and the image of $e$ in $\calS$ is $\dcl (A \cup \{b'\})$.
Hence, both to prove quantifier elimination for $T_{\lambda}$ and to prove the claim, it suffices to show that for any $b\in B \setminus A$, if $b' \in S$ satisfies the same cut over $A$ as $b$ and $\lambda^{\calB}(b)=b \iff \lambda^{\calS}(b')=b'$, then any $\calL$-embedding $e:\dcl (A \cup \{ b\}) \to S$ that fixes $A$ pointwise and maps $b$ to $b'$ is also an $\calL_{\lambda}$-embedding.

Let $\calB=(B, <,+,0,\lambda)  \models T_{\lambda}$, let $\calA$ be a substructure of $\calB$, and let $(\calS, \lambda^*)$ be an $|A|^+$-saturated elementary extension of $\calA$.
Suppose that $b \in \lambda(B) \setminus \lambda (A)$, and note that by saturation of $\calS$, there is an element $b' \in \lambda^* (S)$ such that $b$ and $b'$ satisfy the same cut over $A$.
Let $e:\dcl (A \cup \{b\}) \to S$ map $A$ to itself pointwise, and let $e(b)=b'$.
We note that $e$ is indeed an $\calL$-embedding over $A$, since the fact that $b$ and $b'$ satisfy the same cut over $\calA$ implies that they satisfy the same $\calL(A)$-formulas.
To demonstrate that $e$ is also an $\calL_{\lambda}$-embedding over $A$, it suffices to show that $\lambda(\dcl_{\calL}(A \cup \{b\})) \subseteq \dcl_{\calL}(A \cup \{b\})$ and $e(\lambda(x)) = \lambda^* (e(x))$ for all $x \in \dcl_{\calL}(A \cup \{b\})$, because from this we conclude $e$ respects all atomic $\calL_{\lambda}(A)$-formulas, and thus all quantifier-free ones by induction.

Given $s \in S$, define $r^{\Z}s:= \{r^zs: z\in \Z\}$, and for any $X \subseteq S$ let $H(X)$ denote the convex hull of $X$ in $S$.
Recall the following fact from \cite{MT06}: If $s \ll 1$ and $A \cap H( r^{\Z}s ) = \emptyset$, then 

\begin{equation}\label{MTlemma}
\{ x \in \dcl_{\calL} (A \cup \{ s \}): x \ll 1\} \subseteq \bigcup_{\{a \in A^{\ll 1}\} } H ( r^{\Z}\lambda (a)) \cup H(r^{\Z}s).
\end{equation}
Suppose that $c \in C:= \dcl_{\calL}(A \cup \{b\})$.  
If $c \in \dcl_{\calL}(A)$, then it is immediate that $\lambda(c) \in A$ since $\calA$ is a substructure of $\calB$.
So suppose that $c \in C\setminus \dcl_{\calL}(A)$, and suppose also that $0< c \ll 1$, since otherwise it is clear by definition of $\lambda$ that $\lambda(c) \in  \dcl_{\calL} (A \cup \{b \})$.
By axiom (iv), we observe that $c<\lambda(c)<rc$, hence $\lambda(c) \in H(r^{\Z}c)$.
Since we assume $c \in \{ x \in \dcl (A \cup \{ s \}): x \ll 1\}$, we know that either $c \in H ( r^{\Z}\lambda (a))$ for some $a \in A$, in which case there is $z \in \Z$ for which $r^z\lambda(a)\geq c > r^{z+1}\lambda(a)$ so by axiom (v) we conclude $\lambda(c)=\lambda(r^za)$. 
Otherwise, we know $c \in H(r^{\Z}b)$, so by a similar argument, and the fact that $b=\lambda(b)$, we conclude $\lambda(c) = r^zb$ for some $z \in \Z$.
In either case, we conclude that $\lambda(c) \in C$, as desired.

Finally, suppose that $x \in C$, and for contradiction we suppose that $e(\lambda(x)) \neq \lambda^*(e(x))$.
Without loss of generality, we may assume that $e(\lambda(x))<\lambda^*(e(x))$.
Since $e(a)=a$ for all $a \in \calA$, we know $\lambda(a) = \lambda^*(a)$ for all $a \in \calA$.
Note also that $e$ preserves cuts over $A \cup \{b\}$, so the cut of $e(x)$ over $A \cup \{b^*\}$ is the image under $e$ of the cut of $x$ over $A\cup \{b\}$.
If $x>q$ for some $q \in \Q^{\calS}$, then it must be the case that $\lambda^*(e(x))=e(\lambda(x))$, since $e(q^{\calA}) = q^{\calS}$ for all $q \in \Q$, and for each $q \in \Q$ we know that $\lambda(q)$ is the least power of $\frac{1}{r}$ greater than $q$.
So we conclude that $x\ll 1$.
By (\ref{MTlemma}), we obtain the following:
$$x \in \bigcup_{1 \gg a \in A} H(r^{\Z}\lambda(a)) \cup H(r^{\Z}b).$$
Since $e$ is $\calL$-elementary, we conclude the following:
$$e(x) \in \bigcup_{1 \gg a \in A} H(r^{\Z}\lambda(a)) \cup H(r^{\Z}b^*).$$

If $x \in H(r^{\Z}\lambda(a))$, then by definition, there exist $z,z' \in \Z$ such that $r^{z'} \lambda(a)\leq x \leq r^{z} \lambda(a)$ for some $a \in A$.
Hence there is some $k\in \N$ with $z < k \leq z'$ such that $r^{k-1} \lambda(a) \leq x \leq r^k \lambda(a)$.
Since $r^{k-1} \lambda(a), r^k \lambda(a) \in \lambda (\calA)$, we conclude that $\frac{1}{r}\lambda(r^{k} \lambda(a)) \leq x \leq \lambda(r^k \lambda(a))$.
So by axiom (v), we know $\lambda^*(x)=\lambda(r^k\lambda(a))$.
As $e$ is a $\calL$-elementary map preserving $A$, we conclude that $\frac{1}{r}\lambda(r^{k} \lambda(a)) \leq e(x) \leq \lambda(r^k \lambda(a))$.
Hence we know $\lambda^*(e(x))=\lambda(r^k\lambda(a))=\lambda^*(x)$.

Otherwise, we must conclude $x \in H(r^{\Z}b)$ and $e(x) \in H(r^{\Z}b^*)$.
In this case we similarly conclude that there is some $k \in \Z$ such that $r^{k-1}b<x \leq r^kb$, and that there is some $k' \in \Z$ such that $r^{k'-1}b^*<e(x)\leq r^{k}b^*$.
Since $\lambda(b)=b$ and $\lambda(b^*)=b^*$, we see that $r^{k-1}\lambda(b)<x \leq r^k\lambda(b)$
 and $r^{k-1}\lambda(b^*)<x \leq r^k\lambda(b^*)$.
Hence  $\frac{1}{r}\lambda(r^{k}b)<x \leq \lambda(r^kb)$
 and $\frac{1}{r}\lambda(r^{k'}b^*)<e(x) \leq r^k\lambda(r^{k'}b^*)$, by repeated application of axiom (vi).
So by axioms (v) and (vi), we see $\lambda(x) =r^kb$ and $\lambda^*(e(x))=r^{k'}b^*$.
If $e(\lambda(x))<\lambda^*(e(x))$, then $e(r^{k-1}b)=r^{k-1}b^*<e(x)\leq e(\lambda(x))=e(r^kb)=r^kb^*<\lambda^*(e(x))$.
As $r^{k'-1}b^*<e(x)\leq \lambda^*(e(x))=r^{k'}b^*$, we conclude that $r^kb^*<\lambda^*(e(x))=r^{k'}b^*$.
Yet this yields both $e(x)\leq r^{k}b^*<r^{k'}b^*$ and also $r^kb^*\leq r^{k'-1}b^*<e(x)$, a contradiction.
So it must be the case that $e(\lambda(x)) = \lambda^*(e(x))$, and we conclude that $e$ is in fact an $\calL_{\lambda}$-embedding over $A$.
\end{proof}
\end{lem}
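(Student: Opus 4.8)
The plan is to deduce the statement from quantifier elimination for $T_{\lambda}$ in the language $\calL_{\lambda}$, proving the two together. The reduct of any model of $T_{\lambda}$ to $\calL=\{<,+,0\}$ is an ordered divisible abelian group, hence an ordered $\Q$-vector space, so it has quantifier elimination in $\calL$; in particular every $\calL$-embedding between such structures is $\calL$-elementary, and $\dcl_{\calL}$ of a set is just its $\Q$-span. Since $b$ and $b'$ realize the same $\calL$-type over $\R$, there is an $\calL$-isomorphism $e\colon\dcl_{\calL}(\R\cup\{b\})\to\dcl_{\calL}(\R\cup\{b'\})$ fixing $\R$ pointwise with $e(b)=b'$. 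It therefore suffices to prove two things: (I) quantifier elimination for $T_{\lambda}$, and (II) that $\lambda$ maps $\dcl_{\calL}(\R\cup\{b\})$ into itself and $e$ commutes with $\lambda$ there; for then $e$ is an $\calL_{\lambda}$-isomorphism between the $\calL_{\lambda}$-substructures generated by $\R\cup\{b\}$ and $\R\cup\{b'\}$, and by (I) it is $\calL_{\lambda}$-elementary, so $b$ and $b'$ have the same $\calL_{\lambda}$-type over $\R$.

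Both (I), via Blum's criterion (Theorem 3.1.4 of \cite{M02}), and (II) reduce to the same back-and-forth statement: for $\calA$ a substructure of $\calB\models T_{\lambda}$, $\calS'$ an $|A|^{+}$-saturated elementary extension of $\calA$, and $b\in B$, if $b'\in S'$ realizes the same cut over $A$ as $b$ and satisfies $\lambda(b)=b\iff\lambda(b')=b'$, then any $\calL$-embedding $\dcl_{\calL}(A\cup\{b\})\to\calS'$ that fixes $A$ and sends $b$ to $b'$ intertwines $\lambda$ with its interpretation in $\calS'$; one recovers (II) by taking $\calA=\calR_r$ and $A=\R$. For a fixed $x\in\dcl_{\calL}(A\cup\{b\})$ one reduces at once to the case $0<x\ll 1$ and $x\notin\dcl_{\calL}(A)$: if $x\in\dcl_{\calL}(A)$ then $\lambda(x)\in A$ since $\calA$ is a substructure, and if $x\le 0$, $x\ge 1$, or $x>q$ for some positive rational $q$, then the cut of $x$ over $A$ together with the $\emptyset$-definable elements $r^{-n}$ already pins down $\lambda(x)$, which $e$ then preserves. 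In the remaining case axiom (iv) gives $x<\lambda(x)<rx$, so $\lambda(x)\in H(r^{\Z}x)$.

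The key input is then the structural result of \cite{MT06}: if $s\ll 1$ and $A\cap H(r^{\Z}s)=\emptyset$, then every infinitesimal in $\dcl_{\calL}(A\cup\{s\})$ lies in $\bigcup_{a\in A^{\ll 1}}H(r^{\Z}\lambda(a))\cup H(r^{\Z}s)$. Applying this with $s=b$ (assuming, as we may, that $b$ is infinitesimal), $x$ lies either in some $H(r^{\Z}\lambda(a))$ with $a\in A$ infinitesimal, or in $H(r^{\Z}b)$. In the first case, let $k\in\Z$ be least with $r^{k}\lambda(a)\ge x$, so $r^{k-1}\lambda(a)<x\le r^{k}\lambda(a)$; the element $r^{k}\lambda(a)$ is a fixed point of $\lambda$ (a power of $r$ times the fixed point $\lambda(a)$, still in $[0,1]$, so axiom (vi) applies repeatedly), whence axiom (v) gives $\lambda(x)=r^{k}\lambda(a)\in\dcl_{\calL}(A)$; since $e$ fixes $A$ and preserves the two inequalities, the identical computation in $\calS'$ gives $\lambda(e(x))=r^{k}\lambda(a)=e(\lambda(x))$. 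The case $x\in H(r^{\Z}b)$ is handled the same way with $b$ and $b'$ (both $\lambda$-fixed) in place of $\lambda(a)$, giving $\lambda(x)=r^{k}b\in\dcl_{\calL}(A\cup\{b\})$ and $\lambda(e(x))=r^{k}b'=e(\lambda(x))$.

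The main obstacle is this last step: transferring control of $\lambda$ from the generators $A\cup\{b\}$ to the full $\Q$-span, which is exactly what the \cite{MT06} lemma provides, after which identifying $\lambda(x)$ exactly is bookkeeping with axioms (iv)--(vi). The two fussy auxiliary points are reducing to the hypothesis $A\cap H(r^{\Z}b)=\emptyset$ and disposing of generators that are not infinitesimal or not $\lambda$-fixed; in the setting of the lemma the latter are harmless, since a non-infinitesimal fixed point of $\lambda$ in $[0,1]$ is forced by the axioms to be one of the $\emptyset$-definable elements $r^{-n}$, hence lies in $\R$, and then $b=b'$ trivially.
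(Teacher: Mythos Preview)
Your proposal is correct and follows essentially the same route as the paper: both reduce, via Blum's criterion and the quantifier elimination for ordered divisible abelian groups, to showing that an $\calL$-embedding over $A$ sending $b$ to $b'$ commutes with $\lambda$ on $\dcl_{\calL}(A\cup\{b\})$, and both dispatch this using the \cite{MT06} structural lemma to force any infinitesimal $x$ into either some $H(r^{\Z}\lambda(a))$ or $H(r^{\Z}b)$, then compute $\lambda(x)$ from axioms (iv)--(vi). Your treatment of the $H(r^{\Z}b)$ case is in fact cleaner than the paper's: you observe directly that $e$ sends the inequalities $r^{k-1}b<x\le r^{k}b$ to $r^{k-1}b'<e(x)\le r^{k}b'$, whereas the paper introduces a separate $k'$ for $e(x)$ and then argues by contradiction that $k=k'$; and you make explicit the reductions (to $b$ infinitesimal, to $A\cap H(r^{\Z}b)=\emptyset$) that the paper leaves implicit.
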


\begin{fact}
For $m>1$ the set $r^{-m\N}$ is not definable in the structure $(\R, <, +,r^{-\N})$.
\begin{proof}
Assume for sake of contradiction that the set $r^{-m\N}$ is defined by the $\calL_{\lambda}$-formula $\phi(x)$, with parameters in $\R$.
Let $(\calR^*, \lambda^*)$ be an $\omega$-saturated elementary extension of $(\R, <, +,r^{-\N})$, and let $b \in \lambda^*(\calR^*)$ be such that $b \ll 1$ and $(\calR^*, \lambda^*) \models \phi(b)$.
Let $b'\in \lambda^*(\calR^*)$ be the largest element of $\lambda^*(\calR^*)$ less than $b$.
Then $\neg \phi (b')$ holds, since $m>1$.
Yet we know by Lemma \ref{lambdaQE} that $b$ and $b'$ have the same $\calL_{\lambda}(\R)$-type, a contradiction.
So no such $\phi$ exists, and hence none exists in the language $(<,+,r^{-\N})$ either.
\end{proof}
\end{fact}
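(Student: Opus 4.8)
The plan is a proof by contradiction resting entirely on Lemma~\ref{lambdaQE}. Since the constant $0$ is $\{<,+\}$-definable, $(\R,<,+,r^{-\N})$ is interdefinable with $\calR_r=(\R,<,+,0,\lambda)$ (the interdefinability of $T_r$ and $T_\lambda$ is noted above), so it suffices to show that $r^{-m\N}$ is not $\calL_\lambda$-definable over $\R$; assume for contradiction that it is defined by an $\calL_\lambda(\R)$-formula $\phi(x)$. I would pass to an $|\R|^+$-saturated elementary extension $(\calR^*,\lambda^*)$ of $\calR_r$ — so that Lemma~\ref{lambdaQE} applies to it — and inside it manufacture two elements of $\lambda^*(\calR^*)$ that $\phi$ distinguishes but that share the same $\calL_\lambda$-type over $\R$.

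For the first element I would realize the type over $\R$ asserting $\phi(x)$ together with $x<q$ for every positive real $q$; this is finitely satisfiable in $\calR$ (take $x=r^{-mN}$ with $N$ large), so saturation produces such a $b$, which is then positive, infinitesimal, and — being in the realization of $r^{-m\N}\subseteq r^{-\N}$, hence a fixed point of $\lambda^*$ — an element of $\lambda^*(\calR^*)$. For the second I would take $b':=b/r$, i.e. the unique $y$ with $ry=b$. Using axioms (iv)--(vi) of $T_\lambda$ one checks that $\lambda^*(b)=b$ forces $\lambda^*(b')=b'$, so $b'\in\lambda^*(\calR^*)$, and moreover that $b'$ is precisely the largest element of $\lambda^*(\calR^*)$ strictly below $b$. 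The crucial observation is that the sentence ``$\forall x\,(\phi(x)\wedge x<1\to\neg\phi(x/r))$'' holds in $\calR$ — because when $m>1$ no two consecutive powers $r^{-j},r^{-(j+1)}$ can both be multiples of $r^{-m}$ — hence also in $\calR^*$; since $b\ll 1$ this yields $\phi(b)$ and $\neg\phi(b')$.

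To finish, note that $b$ and $b'$ are both positive infinitesimals, hence realize the same cut, and therefore the same $\calL$-type, over $\R$ (the $\calL$-reduct being a divisible ordered abelian group, which eliminates quantifiers). Lemma~\ref{lambdaQE} upgrades this to equality of $\calL_\lambda$-types over $\R$, contradicting the fact that the $\calL_\lambda(\R)$-formula $\phi$ separates $b$ from $b'$. Hence no such $\phi$ exists, and $r^{-m\N}$ is not definable in $(\R,<,+,r^{-\N})$, even allowing parameters.

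I expect the only real work to be the routine verifications surrounding $b'$: that $b/r$ lies in $\lambda^*(\calR^*)$ and is the immediate predecessor of $b$ there, and the transfer of the separating sentence. There is no serious obstacle — once Lemma~\ref{lambdaQE} is in hand this Fact is essentially an immediate corollary, the point being that $\lambda$ cannot ``see'' the residue modulo $m$ of an infinitesimal power of $r$.
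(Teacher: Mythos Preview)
Your proposal is correct and follows essentially the same route as the paper: pass to a saturated elementary extension, pick an infinitesimal $b$ in the image of $\lambda^*$ satisfying $\phi$, take its immediate predecessor $b'$ in $\lambda^*(\calR^*)$, observe $\neg\phi(b')$, and invoke Lemma~\ref{lambdaQE} to derive the contradiction. Your version simply spells out more of the details (the explicit identification $b'=b/r$, the transfer argument for $\neg\phi(b')$, and the use of $|\R|^+$-saturation, which in fact matches the hypothesis of Lemma~\ref{lambdaQE} better than the paper's stated $\omega$-saturation).
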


Throughout, we say a set $X\subseteq \R$ is \emph{discrete} if for every $x \in X$ there is an open interval $I \subseteq \R$ such that $\{x\} = X \cap I$.

We say that a structure is \emph{d-minimal} if for every $(m+1)$-ary definable set $A$, there is $N \in \N$ such that for each $m$-tuple $x$ the fiber $\{t: (x,t) \in A\}$ either has interior or is the union of at most $N$ discrete sets, as defined in \cite{Mi05}.
The \emph{Cantor-Bendixson derivative} of a set $X \subseteq \R^d$ is $X \setminus X^{iso}$, where $X^{iso}$ are the isolated points of $X$.
The Cantor-Bendixson derivative of $X$ is often denoted by $X'$, and for $\alpha$ an ordinal, let $X^{(\alpha)}$ denote the result of taking the Cantor-Bendixson derivative $\alpha$ times.
The \emph{Cantor-Bendixson rank} of $X$ is the least ordinal $\alpha$ such that $X^{(\alpha)} = X^{(\alpha +1)}$.
If the Cantor-Bendixson rank of $X$ is $\alpha$ and $X^{(\alpha)} = \emptyset$, then we say that $X$ has vanishing Cantor-Bendixson derivative.

\begin{lem}\label{CB-rank}
Suppose that $A \subseteq [0,1]^d$ is $r$-regular and closed.
Then $A$ has vanishing Cantor-Bendixson derivative and finite Cantor-Bendixson rank if and only if $A$ is $r$-sparse.
\begin{proof}
First, we observe that Proposition \ref{sparseDmin} tells us that $A$ is definable in a d-minimal structure, namely $\calR_r$.
Since $A$ is a finite union of discrete sets definable in a d-minimal structure, it has finite Cantor-Bendixson rank; see \cite{Mi05} or \cite{MT18}.
 Moreover, a closed subset of $\R^d$ has vanishing Cantor-Bendixson derivative if and only if it is countable \cite[\S6.B]{Kec}.
The backward direction follows from putting together that $A$ must have vanishing Cantor-Bendixson derivative, and the Cantor-Bendixson rank must be finite.

For the forward direction, we suppose that $A$ is closed but not $r$-sparse.
Then Lemma \ref{TFAE} tells us that $A$ is uncountable, and hence does not have vanishing Cantor-Bendixson derivative by the above ``if and only if'' statement.
\end{proof}
\end{lem}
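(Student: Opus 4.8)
The plan is to derive the lemma by assembling Proposition \ref{sparseDmin}, Lemma \ref{TFAE}, the d-minimality of the structures $\calR_{r,\ell}$ (from van den Dries's results in \cite{vdD85}), and the classical fact that a closed subset of $\R^d$ has vanishing Cantor--Bendixson derivative precisely when it is countable (\cite[\S6.B]{Kec}). The point is that, for a \emph{closed} set, the substantive content of ``vanishing Cantor--Bendixson derivative'' is exactly countability, so both implications ultimately reduce to the countability dichotomy supplied by Lemma \ref{TFAE}.

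For the implication ``$r$-sparse $\Longrightarrow$ vanishing Cantor--Bendixson derivative and finite Cantor--Bendixson rank,'' I would first use Proposition \ref{sparseDmin} to view $A$ as an $\emptyset$-definable subset of the d-minimal structure $\calR_{r,\ell}$ for an appropriate $\ell$. Since $A$ is a closed $r$-sparse subset of $[0,1]^d$, Lemma \ref{TFAE} shows $A\subseteq \Q^d$, hence $A$ is countable and in particular has empty interior; by d-minimality this forces $A$ to be a finite union of discrete definable sets, and then \cite{Mi05} (or \cite{MT18}) gives that such a set has finite Cantor--Bendixson rank. Separately, countability of the closed set $A$ together with \cite[\S6.B]{Kec} yields $A^{(\alpha)} = \emptyset$ for some ordinal $\alpha$, i.e. $A$ has vanishing Cantor--Bendixson derivative.

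For the converse I would argue by contraposition: if $A$ is closed but not $r$-sparse, then Lemma \ref{TFAE} gives that $A$ is uncountable, and being a closed uncountable subset of $\R^d$ it contains a nonempty perfect set, so its Cantor--Bendixson derivatives never reach $\emptyset$. Hence $A$ fails to have vanishing Cantor--Bendixson derivative, and a fortiori the conjunction ``vanishing Cantor--Bendixson derivative and finite Cantor--Bendixson rank'' fails.

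The step I expect to need the most care is the passage, inside $\calR_{r,\ell}$, from ``$A$ is countable and definable'' to ``$A$ is a finite union of discrete definable sets of finite Cantor--Bendixson rank,'' because the definition of d-minimality recalled above is stated fiberwise for $(m+1)$-ary sets; one must apply it coordinate by coordinate (or invoke the cell-decomposition available in d-minimal expansions of ordered groups) to handle a set $A\subseteq\R^d$, and then cite the correct form of Miller's bound on Cantor--Bendixson rank for finite unions of definable discrete sets. Everything else is essentially bookkeeping on top of Lemma \ref{TFAE} and the Kechris dichotomy; as a sanity check, in the $r$-sparse case the explicit description (\ref{eq: form0}) makes the finiteness of the rank transparent, with the rank controlled by the number of star blocks $d$.
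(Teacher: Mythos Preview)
Your proposal is correct and follows essentially the same route as the paper: both directions are reduced, via Lemma~\ref{TFAE} and the Kechris countability/perfect-kernel dichotomy, to the countability of $A$, with Proposition~\ref{sparseDmin} and the d-minimality of $\calR_{r,\ell}$ (plus \cite{Mi05}/\cite{MT18}) supplying finiteness of the Cantor--Bendixson rank in the sparse case. Your flagged concern about lifting the fiberwise d-minimality statement to $A\subseteq\R^d$ is well placed---the paper glosses over exactly this point---so your added justification (empty interior from countability, then a finite union of discrete definable sets) is an improvement rather than a deviation.
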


\begin{lem}\label{isolated}
Suppose that $A \subseteq [0,1]^d$ is $r$-sparse and infinite.
Let $A^{acc}$ denote the accumulation points of $A$, i.e. non-isolated points in $\bar{A}$.
Then $A^{acc}$ has an isolated point.
\end{lem}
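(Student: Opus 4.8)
The plan is to derive this immediately from the Cantor–Bendixson characterization in Lemma \ref{CB-rank}, rather than from the explicit parametrization in Remark \ref{sparse-description0}. By definition, $A^{acc}$ is the set of non-isolated points of $\overline{A}$, i.e.\ it is precisely the first Cantor–Bendixson derivative $\overline{A}^{(1)}$ of $\overline{A}$. Two preliminary observations: first, $A^{acc}$ is closed, since the isolated points of the closed set $\overline{A}$ form a relatively open subset, so their complement in $\overline{A}$ is closed (in $\overline{A}$, hence in $\R^d$); second, $A^{acc}\neq\emptyset$, because $A$ is infinite and contained in the compact set $[0,1]^d$, so $\overline{A}$ is an infinite compact set and therefore has an accumulation point.

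Next I would note that $\overline{A}$ is itself closed and $r$-sparse: closedness is trivial, and $r$-sparseness of $\overline{A}$ is Remark \ref{sparse-closure} (with $\overline{A}$ recognized by $\overline{\calA}$). Hence Lemma \ref{CB-rank} applies to $\overline{A}$ in place of $A$, so $\overline{A}$ has finite Cantor–Bendixson rank and vanishing Cantor–Bendixson derivative; that is, writing $n$ for the (finite) Cantor–Bendixson rank of $\overline{A}$, we have $\overline{A}^{(n)}=\emptyset$, while $\overline{A}^{(\alpha)}=\overline{A}^{(n)}=\emptyset$ for all $\alpha\ge n$.

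Finally I would argue by contradiction: suppose $A^{acc}=\overline{A}^{(1)}$ has no isolated point. Being closed, non-empty, and without isolated points, $\overline{A}^{(1)}$ is a non-empty perfect set, so $\bigl(\overline{A}^{(1)}\bigr)'=\overline{A}^{(1)}$, i.e.\ $\overline{A}^{(2)}=\overline{A}^{(1)}$. An easy transfinite induction (using $\overline{A}^{(1)}\subseteq\overline{A}^{(0)}$ at the first limit step and then constancy) gives $\overline{A}^{(\alpha)}=\overline{A}^{(1)}\neq\emptyset$ for every $\alpha\ge 1$, so $\overline{A}$ does not have vanishing Cantor–Bendixson derivative, contradicting the previous paragraph. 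Therefore $A^{acc}$ must contain an isolated point. There is no substantial obstacle here: Lemma \ref{CB-rank} does all the heavy lifting, and the only care needed is the bookkeeping identification of $A^{acc}$ with $\overline{A}^{(1)}$ together with the standard fact that a non-empty perfect set is its own derived set.
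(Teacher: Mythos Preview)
Your proof is correct and follows essentially the same approach as the paper: identify $A^{acc}$ with $\overline{A}^{(1)}$, invoke Lemma \ref{CB-rank} on $\overline{A}$ to get finite Cantor--Bendixson rank with vanishing derivative, note $A^{acc}\neq\emptyset$ by compactness, and conclude that $A^{acc}$ cannot equal its own derived set. Your version is slightly more careful in explicitly citing Remark \ref{sparse-closure} to justify that $\overline{A}$ is $r$-sparse (so that Lemma \ref{CB-rank} applies), a step the paper leaves implicit.
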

\begin{proof}
By Lemma \ref{CB-rank}, we know that $\bar{A}$ has vanishing Cantor-Bendixson derivative, and finite Cantor-Bendixson rank.
This means there is $n \in \N$ such that the $n$-th Cantor-Bendixson derivative is $\emptyset$.
Since $\bar{A}$ is an infinite compact subset of $\R^d$, it must have at least one accumulation point (by the Bolzano-Weierstrass theorem).
Since taking the Cantor-Bendixson derivative removes the isolated points, the first Cantor-Bendixson derivative of $\bar{A}$ is $A^{acc} \neq \emptyset$.
Taking the second Cantor-Bendixson derivative, it cannot be the case that we get $A^{acc}$ again, because this would contradict the fact that $\bar{A}$ has vanishing Cantor-Bendixson derivative.
So we conclude there is at least one isolated point in $A^{acc}$.
\end{proof}

We now establish some notation concerning subwords.
For any $u,v \in ([r]^m)^*$, write $u \subseteq v$ if $u$ is a prefix of $v$.
If it does not hold that $u \subseteq v$, then we write $u \not \subseteq v$.

We are now able to prove the last result needed for Theorem \ref{sparse rN}.
For the rest of this section, set $\calR_{A} :=(\R, <,+,0,1, A)$ with $A$ a unary predicate.

Note that in any linearly ordered structure (with the ordering symbol $<$ in the language) if $X$ is a definable and discrete set, the graph of the predecessor function on $X$ is defined as follows:
\begin{align}\label{pred}
P_X(x)= & y \iff x \in X \land y \in X \land y<x \land \forall z(y<z<x \rightarrow z \not \in X).
\end{align}

\begin{prop} \label{rtoNdefinable}
The set $r^{-\N}$ is $\emptyset$-definable in $\calR_{A}$, whenever $A\subseteq [0,1]^m$ is infinite and $r$-sparse.
\begin{proof}
Since $A$ is $r$-sparse and infinite, there is some $i \in [d]$ such that the projection of $A$ onto the $i$-th coordinate is again infinite and sparse, thus we may assume that $m=1$. 
We note that we may also define the closure of $A$, and thus assume $A$ is closed as well. 
Since we can define the accumulation points of $A$, by  Lemma \ref{CB-rank}, we see that by replacing $A$ by a suitable higher Cantor-Bendixson derivative, we may assume that $A$ is infinite and sparse but has only finitely many accumulation points. 
Finally, since we can define the intersection of $A$ with a small neighborhood of a single accumulation point, we may in fact assume that $A$ has exactly one accumulation point.

By Remark \ref{sparse-description0} we can express $A$ as a finite union of sets of the following form:
\begin{align}\label{sparseA}
\left\{ c_0  + c_1 r^{-\delta_1 n_1} + c_2 r^{-\delta_1 n_1 - \delta_{2} n_{2}}\cdots + c_{d-1} r^{-\delta_1 n_1-\cdots -\delta_{d-1} n_{d-1}} \colon n_1,\ldots ,n_{d-1}\ge 0\right\},
\end{align}

with $d\ge 1$ and $c_1,\ldots ,c_{d-1}$ nonzero rational numbers and $c_0$ rational.  Moreover, since $A$ has only one accumulation point, we must have $d\le 2$ for all such sets that $A$ comprises, since the accumulation points of (\ref{sparseA}) comprise the set
$$
\left\{ c_0  + c_1 r^{-\delta_1 n_1} + c_2 r^{-\delta_1 n_1 - \delta_{2} n_{2}}\cdots + c_{d-2} r^{-\delta_1 n_1-\cdots -\delta_{d-2} n_{d-2}} \colon n_1,\ldots ,n_{d-2}\ge 0\right\},$$ and by assumption $c_1,\dots , c_{d-2}$ are nonzero. Moreover, at least one set of this form has $d=2$, since $A$ is infinite.

It follows that $A$ is the union of a finite set along with a finite number of sets $\{ a_i + b_i r^{-\delta_i \mathbb{N}} \colon i=1,\ldots ,s\}$
with $a_i,b_i$ rational, $b_i$ nonzero, and $\delta_i$ a positive integer for $i=1,\ldots ,s$. We can remove a finite number of rational elements from $A$ and so we may assume that $A$ is precisely this finite union of infinite sets given above. 

Moreover, since each $a_i$ is an accumulation point and $A$ has a single accumulation point, we see that $a_1=a_2=\cdots =a_s$.
Since we can define a translate of $A$ by a rational number and a scaling of $A$ by a positive integer, we may assume without loss of generality that $a_1=\cdots =a_s=0$ and that $b_1,\ldots ,b_s$ are integers with at least one $b_i$ positive.  
Finally, by replacing $A$ by $A\cap [0,\infty)$ we may assume that each $b_i$ is a positive integer.  We may also assume that $r$ is not a power of a smaller positive integer, by enlarging the $\delta_i$'s if necessary.

Now we define the set $B$ consisting of all $x\in (0,1)$ such that $b_i x\in A$ for each $i=1,\ldots ,s$.
We claim that $B=r^{-T}$, where $T$ is an infinite eventually periodic subset of $\mathbb{N}$.  
To see this, observe that by construction for sufficiently large $N$, we have that if 
$r^{-N}$ is in $B$ then so is $r^{-N-\delta}$, where $\delta$ is the lcm of $\delta_1,\ldots ,\delta_s$. 
Since $B\subseteq (0,1)$, we see that it suffices to show that $B\subseteq r^{\mathbb{Z}}$.

We now let $\sim_r$ be the equivalence relation on $\mathbb{Q}\setminus\{0\}$ where two nonzero rational numbers are equivalent if their ratio is an integer power of $r$.  Now suppose towards a contradiction that $B$ is not contained in $r^{-\mathbb{N}}$. Then there is some element $\lambda\in B$, with $\lambda\not\sim_r 1$. 
By construction, for each $i$ there is some index $a(i)$ such that $\lambda  \sim_r b_{a(i)}/b_i$.
Now if we start with $b_1$ and consider the sequence $1, a(1), a(a(1)), \ldots$, we see that at some point we have a cycle, $p_1,\ldots ,p_m$ with $a(p_i)=p_{i+1}$ for $i=1,\ldots, m$, where $p_{m+1}=p_1$.  
Then by telescoping, we have $$\lambda^m \sim_r \prod_{i=1}^m b_{p_{i+1}}/b_{p_i} = 1.$$  
So $\lambda^m\sim_r 1$, and is hence a power of $r$. Since $r$ is not itself a power of a smaller integer, we see that $\lambda\sim_r 1$, a contradiction.  Thus $B=r^{-T}$ for some infinite eventually periodic subset $T$ of $\mathbb{N}$.  

Now let $\ell$ be the smallest positive eventual period of $T$; that is, for all $n\in T$ sufficiently large we have $n+\ell\in T$, but no smaller positive $\ell$ has this property. Then let $0\le i_1 <\cdots < i_q <\ell$ denote the set of numbers $i\in \{0,\ldots ,\ell-1\}$ such that $T\cap (i+\ell \mathbb{N})$ is infinite.  Now we can define the set 
$C$ consisting of elements $x$ such that $r^{-i_1}x,\ldots ,r^{-i_q}x$ are all in $B$.
Notice that $C$ contains $k^{-\ell (\mathbb{N}+m)}$ for some integer $m$.  We claim that up to a finite set, it is exactly this set. To see this, observe that if this is not the case, then by construction $C$ must contain a set of the form $k^{-i +\ell(\mathbb{N}+m')}$ for some $i\in \{1,2,\ldots ,\ell-1\}$ and some $m'>0$. But this means that $i+i_1,\ldots ,i+i_q$ must be a permutation of $i_1,\ldots ,i_q$ mod $\ell$. In particular if $\ell'=\gcd(\ell,i)<\ell$ then the set $T$ must have eventual period $\ell'$, contradicting minimality of $\ell$.  It follows that $C$ has finite symmetric difference with $r^{-\ell \mathbb{N}}$ and so we can define $r^{-\ell\mathbb{N}}$ and hence $r^{-\mathbb{N}}$ by Remark \ref{rmk:33}.

\end{proof}
\end{prop}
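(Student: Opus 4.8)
The plan is to reduce $A$, through a sequence of $\emptyset$-definable operations in $\calR_A$ that preserve $r$-sparsity and infinitude, to a very concrete shape, and then to extract $r^{-\N}$ from it by a definable construction whose correctness rests on a multiplicative-independence argument. For the reductions: since $A$ is $r$-sparse and infinite, some coordinate projection $\pi_i(A)$ is infinite and $r$-sparse, and as $\pi_i(A)$ is $\emptyset$-definable in $\calR_A$ I may assume $m=1$. Topological closure is $\emptyset$-definable in any ordered structure, and by Remark~\ref{sparse-closure} the closure $\overline A$ is still $r$-sparse, so I may assume $A$ closed. By Lemma~\ref{CB-rank}, $A$ has finite Cantor--Bendixson rank and vanishing Cantor--Bendixson derivative; each Cantor--Bendixson derivative of $A$ is $\emptyset$-definable, so replacing $A$ by a suitable one I may assume $A$ is infinite with only finitely many accumulation points, and then, restricting to an $\emptyset$-definable neighbourhood of the least accumulation point, that $A$ has exactly one accumulation point. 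Applying Remark~\ref{sparse-description0}, $A$ is a finite union of sets of the form $\{c_0+c_1 r^{-\delta_1 n_1}+\cdots+c_{d-1}r^{-\delta_1 n_1-\cdots-\delta_{d-1}n_{d-1}}:n_1,\dots,n_{d-1}\ge 0\}$; the accumulation set of such a piece is obtained by deleting its last summand, so the single-accumulation-point hypothesis forces $d\le 2$ on every piece, with $d=2$ on at least one. Thus, up to finitely many points, $A=\bigcup_{i=1}^s(a_i+b_i r^{-\delta_i \N})$ with $a_i,b_i\in\Q$, $b_i\ne 0$, $\delta_i\in\N_{>0}$; the common accumulation point forces all $a_i$ equal, so after translating by a rational, scaling by a positive integer, possibly reflecting, and intersecting with $[0,\infty)$ I may take $a_i=0$ and $b_1,\dots,b_s$ positive integers, and, absorbing any perfect power of $r$ into the $\delta_i$, that $r$ is not a proper perfect power.

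For the core construction, define the $\emptyset$-definable set $B=\{x\in(0,1):b_i x\in A\text{ for all }i\}$. I claim $B=r^{-T}$ for some infinite, eventually periodic $T\subseteq\N$. Writing $\delta=\operatorname{lcm}(\delta_1,\dots,\delta_s)$, the shape of $A$ shows that for all large $N$ one has $r^{-N}\in B$ iff $r^{-N-\delta}\in B$, giving eventual periodicity, and infinitude follows since $A$ is infinite; the real point is that $B\subseteq r^{\Z}$. Suppose not, and fix $\lambda\in B$ with $\lambda\not\sim_r 1$, where $\sim_r$ makes two nonzero rationals equivalent when their ratio is an integer power of $r$. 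For each $i$, $\lambda b_i\in A\cap(0,1)\subseteq\bigcup_j b_j r^{-\delta_j\N}$, so $\lambda\sim_r b_{a(i)}/b_i$ for some index $a(i)$. Iterating the map $a$ from the index $1$ yields a cycle $p_1,\dots,p_m$ with $a(p_t)=p_{t+1}$ (indices cyclically), and telescoping gives $\lambda^m\sim_r\prod_{t=1}^m b_{p_{t+1}}/b_{p_t}=1$; hence $\lambda^m$ is an integer power of $r$, and since $r$ is not a proper perfect power this forces $\lambda\sim_r 1$, a contradiction. So $B=r^{-T}$.

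To recover $r^{-\N}$: let $\ell$ be the least eventual period of $T$ and let $0\le i_1<\cdots<i_q<\ell$ be the residues $i$ for which $T\cap(i+\ell\N)$ is infinite. Put $C=\{x:r^{-i_1}x,\dots,r^{-i_q}x\in B\}$, which is $\emptyset$-definable since each $r^{-i_j}$ is a fixed rational. Then $C\supseteq r^{-\ell(\N+m_0)}$ for some $m_0$, and if $C$ also contained some $r^{-i+\ell(\N+m')}$ with $i\in\{1,\dots,\ell-1\}$ then $i+i_1,\dots,i+i_q$ would be a permutation of $i_1,\dots,i_q$ modulo $\ell$, which would give $T$ the eventual period $\gcd(\ell,i)<\ell$, contradicting minimality of $\ell$. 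Hence $C$ has finite symmetric difference with $r^{-\ell\N}$, so $r^{-\ell\N}$ is $\emptyset$-definable, and then $r^{-\N}$ is $\emptyset$-definable by Remark~\ref{rmk:33}.

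The main obstacle is the inclusion $B\subseteq r^{\Z}$ in the core construction: it is exactly here that multiplicative structure enters, via the passage from $\lambda^m\sim_r 1$ to $\lambda\sim_r 1$ (which fails without the reduction that $r$ is not a proper perfect power) and via the bookkeeping that turns the index map $a(\cdot)$ into a genuine cycle so that the telescoping product collapses to $1$. The surrounding reductions are routine but must be executed with care to confirm that every step — closure, iterated Cantor--Bendixson derivative, localization at an accumulation point, rational translation and integer scaling, and the passage to $B$ and then $C$ — is $\emptyset$-definable in $\calR_A$ and keeps the set $r$-sparse and infinite.
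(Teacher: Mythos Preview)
Your proposal is correct and follows essentially the same approach as the paper: the same sequence of $\emptyset$-definable reductions (projection, closure, iterated Cantor--Bendixson derivative, localization at one accumulation point, rational translation and scaling) down to $A=\bigcup_i b_i r^{-\delta_i\N}$, the same definition of $B$, the same $\sim_r$ cycle-and-telescoping argument to show $B\subseteq r^{\Z}$, and the same residue-class construction of $C$ to isolate $r^{-\ell\N}$ up to a finite set.
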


If $L\subseteq ([r]^m)^{\omega}$ is sparse and a union of $n$ sets of form $u_1v_1^* \ldots u_{d-1}v_{d-1}^* u_d v_d^{\omega}$, let $W_L:= \{u_{1,1},v_{1,1}, \ldots ,u_{m,d},v_{m,d}\}$ witness this, and be such that for all $i\leq d$ we know $v_{i,1}, \ldots ,v_{i,d}$ are non-empty strings.
Let $\ell_L$ be the least common multiple of $\{|w|: w\in W_L\}$.

\begin{cor}[Theorem \ref{sparse rN}]\label{sparseD}
If $A$ is infinite and $r$-sparse and $L \subseteq ([r]^m)^{\omega}$ is such that $\nu_r(L)=A$, then there is a reduct $\tilde{\calR}$ of $\calR_{r,\ell_L}$ that expands $(\R,<,+,r^{-\N})$ and defines the same sets (without parameters) as $\calR_A$.
Hence $\calR_A$ is d-minimal if $A$ is $r$-sparse.
\begin{proof}
By Proposition \ref{rtoNdefinable}, if $A$ is $r$-sparse then the set $r^{-\N}$ is $\emptyset$-definable in the language $$(<,+,0,1,A),$$ and similarly by Proposition \ref{sparseDmin} we know such an $A$ is $\emptyset$-definable in the language $(<,+,0,r^{-\ell_L \N})$.
Hence $\calR_A$ defines the same sets as some reduct of $\calR_{r,\ell_L}$, and that reduct includes all sets definable with $(<,+,r^{-\N})$.
Moreover, the work of van den Dries \cite{vdD85} establishes that the structure $\calR_{r,\ell_L}$ is d-minimal, so $\calR_A$ is d-minimal as well.
\end{proof}
\end{cor}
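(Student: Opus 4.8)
The plan is to assemble the corollary from the two directions of definability established in Propositions~\ref{rtoNdefinable} and~\ref{sparseDmin}, and then read off d-minimality from van den Dries's theorem. First I would note that, since $A$ is infinite and $r$-sparse, Proposition~\ref{rtoNdefinable} gives that $r^{-\N}$ is $\emptyset$-definable in $\calR_A=(\R,<,+,0,1,A)$; in particular $\calR_A$ defines, without parameters, every set definable in $(\R,<,+,r^{-\N})$. In the other direction I would invoke Proposition~\ref{sparseDmin}, but tracking the constant it produces: its proof first puts $A$ into the form~(\ref{eq:AA}) using Remarks~\ref{rmk:samelength} and~\ref{sparse-description0}, and then merges the resulting finitely many structures $\calR_{r,\delta_i}$ into a single one via Remark~\ref{rmk:33}; the exponent one lands on can be taken to be the least common multiple of the lengths of the words $u_i,v_i$ witnessing a decomposition of $L$ of the form~(\ref{sparse form}), which is exactly $\ell_L$. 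Hence $A$ is $\emptyset$-definable in $\calR_{r,\ell_L}$.

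Combining these, every $\emptyset$-definable (indeed every parameter-definable) relation of $\calR_A$ is definable in $\calR_{r,\ell_L}$, because $<,+,0,1$ belong to the language of $\calR_{r,\ell_L}$ and $A$ is now $\emptyset$-definable there. So I would take $\tilde\calR$ to be the reduct of $\calR_{r,\ell_L}$ to the relations $\emptyset$-definable in $\calR_A$ (equivalently, the structure $(\R,<,+,0,1,A)$ regarded as a reduct of $\calR_{r,\ell_L}$ through the $\emptyset$-definition of $A$). By construction $\tilde\calR$ and $\calR_A$ have the same $\emptyset$-definable sets, and since $r^{-\N}$ is among them while $<$ and $+$ are basic, $\tilde\calR$ expands $(\R,<,+,r^{-\N})$.

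For the d-minimality clause: van den Dries~\cite{vdD85} proves that $\calR_{r,\ell}$ is d-minimal for every $\ell$, in particular for $\ell=\ell_L$; and d-minimality, being a statement that quantifies only over the definable sets of the structure, transfers to any reduct, so $\tilde\calR$, and hence $\calR_A$, is d-minimal. The only point requiring any real care is the bookkeeping in the second step --- pinning down that the exponent coming out of Proposition~\ref{sparseDmin} can be taken to be precisely $\ell_L$ rather than some larger lcm --- and this is just a matter of unwinding how Remarks~\ref{rmk:samelength},~\ref{sparse-description0}, and~\ref{rmk:33} interact; everything else is a direct appeal to the cited results.
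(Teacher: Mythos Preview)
Your proposal is correct and follows essentially the same route as the paper: invoke Proposition~\ref{rtoNdefinable} for one direction, Proposition~\ref{sparseDmin} for the other, and then cite van den Dries for d-minimality. You supply more detail than the paper does---in particular the explicit check that the exponent from Proposition~\ref{sparseDmin} can be taken to be $\ell_L$ (via Remarks~\ref{rmk:samelength}, \ref{sparse-description0}, \ref{rmk:33}) and the observation that d-minimality passes to reducts---but these are elaborations of the same argument rather than a different one.
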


We can characterize $r$-sparse sets even more simply as follows.

\begin{cor}
A subset of $\R^m$ is $r$-sparse if and only if it is definable in $\calR_{r, \ell}$ for some integer $\ell$. 
\end{cor}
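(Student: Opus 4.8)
The plan is to establish the two implications separately. For the forward direction, suppose $X \subseteq \R^m$ is $r$-sparse. If $X$ is finite it is trivially definable in $\calR_{r,1}$ (every point being a rational point is $\emptyset$-definable, and finite unions of such are definable with parameters), so assume $X$ is infinite. Then by Remarks \ref{rmk:samelength} and \ref{sparse-description0}, $X$ is a finite union of sets of the form (\ref{eq:AA}), each governed by a single positive integer $\delta_j$. Let $\ell$ be the product (or least common multiple) of all the $\delta_j$ occurring across the finitely many pieces. Now invoke Proposition \ref{sparseDmin} together with Remark \ref{rmk:33}: each piece is $\emptyset$-definable in $\calR_{r,\delta_j}$, hence in $\calR_{r,\ell}$, and a finite union of definable sets is definable. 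This gives that $X$ is definable (with parameters from $\Q^m$, which are themselves $\emptyset$-definable in the ordered group) in $\calR_{r,\ell}$.

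For the converse, suppose $X \subseteq \R^m$ is definable in $\calR_{r,\ell}$ for some integer $\ell$. By Corollary \ref{sparseD}, for any infinite $r$-sparse subset $A$ of $[0,1]$ (which exists, e.g.\ $\nu_r(0^*10^{\omega})$, recognized by a suitable sparse automaton), the set $r^{-\N}$ is $\emptyset$-definable in $\calR_A$, and conversely $A$ is $\emptyset$-definable in $\calR_{r,\ell_L}$; more to the point, $r^{-\ell\N}$ is definable in $\calR_{r,\ell'}$ whenever $\ell \mid \ell'$ by Remark \ref{rmk:33}, and by enlarging we may as well treat $\calR_{r,\ell}$ as a structure whose definable sets are controlled by van den Dries's d-minimal structure $\calR_{r,\ell}$. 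The key input here is that $\calR_{r,\ell}$ is d-minimal (from \cite{vdD85}) and that a d-minimal structure does not define any Cantor set; hence any $X$ definable in $\calR_{r,\ell}$ has the property that its topological closure, and indeed the closure of each projection $\overline{\pi_i(X)}$, has vanishing Cantor–Bendixson derivative and finite Cantor–Bendixson rank (finite unions of discrete sets definable in a d-minimal structure, per \cite{Mi05}, \cite{MT18}). The real work is to argue that $X$ is moreover $r$-regular to begin with—equivalently, that the graph of $V_r$ is definable in $\calR_{r,\ell}$ and hence every $\calR_{r,\ell}$-definable subset of $[0,1]^m$ is $r$-regular. This follows because $r^{-\ell\N}$ is $\emptyset$-definable in $(\R,<,+,0,V_r)$, so $\calR_{r,\ell}$ is a reduct of $(\R,<,+,0,V_r)$, and by the theorem of Boigelot–Rassart–Wolper every subset of $[0,1]^m$ definable in the latter is $r$-regular; the same then holds for $X$. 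Combining $r$-regularity of $X$ with the Cantor–Bendixson finiteness just established and Lemma \ref{CB-rank} (for the closure of $X$ intersected with a cube, after the usual reduction of a definable set to a compact piece) yields that $X$ is $r$-sparse.

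The main obstacle is the converse direction, specifically bridging from "$X$ is definable in $\calR_{r,\ell}$" to "$X$ is $r$-regular": one must know that $\calR_{r,\ell}$ defines nothing outside the $r$-regular world, which requires the reduction to $(\R,<,+,0,V_r)$ and the Boigelot–Rassart–Wolper characterization—and one must be slightly careful that $X \subseteq \R^m$ rather than $X \subseteq [0,1]^m$, so one should first note that $r$-sparsity and $\calR_{r,\ell}$-definability are both invariant under the rational affine changes of coordinates that bring a bounded definable set inside a cube, and that an arbitrary $\calR_{r,\ell}$-definable set is, by d-minimality, a finite union of cells each of which is (up to such coordinate changes) of the relevant bounded shape. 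Once $r$-regularity is in hand, Lemma \ref{dimH0} or Lemma \ref{CB-rank} closes the argument cleanly. I would present the forward direction in a short paragraph and devote the bulk of the proof to carefully setting up the converse reduction.
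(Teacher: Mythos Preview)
The paper states this corollary without proof. Your forward direction is correct and is precisely Proposition~\ref{sparseDmin}.

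For the converse, there is a genuine problem --- indeed, the statement as literally written is false. The interval $[0,\tfrac12]\subseteq[0,1]$ is $\emptyset$-definable in every $\calR_{r,\ell}$, yet by Lemma~\ref{TFAE} every closed $r$-sparse set is countable, so $[0,\tfrac12]$ is not $r$-sparse. Your argument breaks exactly at this counterexample: you assert that d-minimality of $\calR_{r,\ell}$ forces $\overline{\pi_i(X)}$ to have vanishing Cantor--Bendixson derivative and finite Cantor--Bendixson rank, but d-minimality only says that a unary definable set \emph{either has interior or} is a finite union of discrete sets. Intervals take the first disjunct. Likewise the step ``defines no Cantor set $\Rightarrow$ vanishing CB-derivative'' is invalid: a closed interval is perfect and is not a Cantor set, yet its CB-derivative is itself, not $\emptyset$. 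So the appeal to Lemma~\ref{CB-rank} never gets off the ground.

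If one adds the hypothesis that each $\overline{\pi_i(X)}$ has empty interior, your strategy does succeed: d-minimality then gives that each closed definable $\overline{\pi_i(X)}$ is a finite union of discrete sets, hence countable; it is $r$-regular by the Boigelot--Rassart--Wolper theorem (since $\calR_{r,\ell}$ is a reduct of $(\R,<,+,0,V_r)$); and Lemma~\ref{TFAE} then yields that $\overline{\pi_i(X)}$ is $r$-sparse, which passes down to $X$ because the length-$n$ prefixes of any language recognizing $X$ are among those of the language recognizing $\overline{X}$. The corollary is therefore most likely a misstatement, with this restricted version --- or simply the structure-level interdefinability already recorded in Corollary~\ref{sparseD} --- being the intended content.
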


The following might be relevant to model theorists.

\begin{cor}\label{sparseNIP}
The structure $\calR_A$ has NIP whenever $A$ is $r$-sparse.
\begin{proof}
This is immediate from Theorem \ref{sparse rN} and results from \cite{GH11}, in which the authors show that certain d-minimal structures (which include our $\calR_A$) have NIP.
\end{proof}
\end{cor}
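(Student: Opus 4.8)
The plan is to deduce NIP of $\calR_A$ from the interdefinability statement of Theorem~\ref{sparse rN} (that is, Corollary~\ref{sparseD}) together with the known model-theoretic tameness of $(\R,<,+,0,1,r^{-\ell\N})$. If $A$ is finite, say $A=\{a_1,\dots,a_k\}$, then $\calR_A$ is a reduct of the expansion of the o-minimal structure $(\R,<,+,0,1)$ by the constants $a_1,\dots,a_k$, which is again o-minimal and hence has NIP; so I may assume $A$ is infinite. Then Corollary~\ref{sparseD} produces a reduct $\tilde{\calR}$ of $\calR_{r,\ell_L}$ that defines, without parameters, exactly the same sets as $\calR_A$. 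Two structures on the same universe with the same $\emptyset$-definable sets have the same parameter-definable sets as well (a set definable over a finite parameter tuple is a fiber of a $\emptyset$-definable set in one extra block of variables), so $\calR_A$ has NIP precisely when $\tilde{\calR}$ does; and since NIP is inherited by reducts, it suffices to show that $\calR_{r,\ell_L}=(\R,<,+,0,1,r^{-\ell_L\N})$ has NIP.

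For that step I would observe that $r^{-\ell_L\N}=(r^{\ell_L})^{-\N}$, so $\calR_{r,\ell_L}$ is literally an expansion of $(\R,<,+,0,1)$ by $s^{-\N}$ with $s:=r^{\ell_L}\in\N_{>1}$ --- exactly the kind of structure shown d-minimal by van den Dries in \cite{vdD85}. The class of d-minimal expansions of $(\R,<,+)$ of this shape is shown to have NIP in \cite{GH11}; to invoke it one checks that the open core here is the o-minimal ordered $\Q$-vector space $(\R,<,+,0,1)$ and that the distinguished predicate $s^{-\N}$ is a closed discrete subset of $\R$, which is precisely the configuration handled there. Combining this with the reductions of the first paragraph yields NIP for $\calR_A$.

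The only real work --- and hence the place where some care is needed --- lies in matching up the hypotheses of the cited results: confirming that the precise class of structures for which \cite{GH11} establishes NIP does include $(\R,<,+,0,1,s^{-\N})$, and checking that ``defines the same sets (without parameters)'' in Corollary~\ref{sparseD} is strong enough to transfer NIP (it is, by the fiber argument above, since NIP is a property of the theory and is insensitive to naming finitely many parameters). With those two points in hand, the statement follows immediately, as the phrasing of the corollary suggests.
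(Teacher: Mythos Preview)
Your proposal is correct and follows essentially the same route as the paper: invoke Theorem~\ref{sparse rN} (i.e., Corollary~\ref{sparseD}) to reduce to $\calR_{r,\ell}$, and then cite the NIP result of \cite{GH11} for such d-minimal structures. You have simply fleshed out the details (the finite case, the transfer of NIP under interdefinability and reducts) that the paper leaves implicit in its one-line proof.
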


\section{A fractal geometry and tameness dichotomy}

In this section, we consider the structure $\calR_{A} :=(\R, <,0, +, A)$ for which we now mandate that $A$ be an $r$-regular subset of $[0,1]^d$, but not necessarily $r$-sparse.
We will start by proving some key facts about the isolated points of $A$, denoted $A^{iso}$, and the subset $A^{\rm{ctbl}}$
of $A$ consisting of points $x \in A$ for which there exists an interval $I \ni x$ such that $I \cap A$ is countable.
Throughout this section, let $L\subseteq ([r]^d)^{\omega}$ be such that $\nu_r(L)=A$, let $L^{iso} \subseteq ([r]^d)^{\omega}$ be such that $\nu_r(L^{iso})=A^{iso}$, and let $L^{\rm{ctbl}} \subseteq ([r]^d)^{\omega}$ be such that $\nu_r(L^{\rm{ctbl}})=A^{\rm{ctbl}}$.
We use throughout that by Theorem \ref{buchi} we can write $L$ as the finite union of sets of the form $V_1W_1^{\omega}, \ldots ,V_nW_n^{\omega}$ where each $V_i$ and $W_i$ are in $([r]^d)^*$.

We now introduce some notation in the context of $r$-representations.
Given an element $x \in ([r]^m)^{\omega}$, 
let $x_{[a,b]}$ with $a \in \N$ and $b \in \N\cup \{\infty \}$ denote the substring of $x$ starting at index $a \in \omega$ and ending with index $b \in \omega$.
When $b=\infty$, then $x_{[a,\infty)}$ denotes the tail sequence of $x$ starting at index $a$, 
and it is again an element of $([r]^m)^{\omega}$.
For $u,v \in ([r]^m)^{\omega}$, we say that $u\simeq_k v$ precisely if $u_{[0,k]}=v_{[0,k]}$.
Given a language $L\subseteq ([r]^m)^{\omega}$, let $[u]_{\simeq_k,L}$ denote $\{v \in L: u_{[0,k]}=v_{[0,k]}\}$.
We say that $x,y \in ([r]^m)^{\omega}$ are in the same \emph{tail equivalence class} if there exist $k,\ell \in \N$ such that $x_{[k,\infty)}=y_{[\ell,\infty)}$.

\begin{lem}\label{ctbl}
Let $L \subseteq  [r]^{\omega}$ be regular.
For every element $x \in L$, the element $\nu_r(x)$ is not in the closure of any uncountable V-W component of $L$ if and only if $x \in L^{iso} \cup L^{\rm{ctbl}}$.
Moreover, the elements of $L^{\rm{ctbl}}$ and $L^{iso}$ are in one of finitely many tail-equivalence classes.
\begin{proof}

For the backwards direction, we consider the contrapositive; suppose $\nu_r(x)$ is in the closure of some uncountable V-W component of $L$, and we will demonstrate that $x \not \in L^{iso} \cup L^{\rm{ctbl}}$.
Fix one such component $VW^{\omega}$.
Since $x$ is in the closure of $VW^{\omega}$, it has a $r$-representation that is accepted by the automaton-theoretic closure of $VW^{\omega}$.

We define the \emph{limit language} of a (finite or infinite) language $L$ on alphabet $\Sigma$ to be the set of infinite words $w \in \Sigma^{\omega}$ such that infinitely many prefixes of $w$ are also a prefix of a word in $L$.
We observe that $\overline{VW^{\omega}}$, the closure of $VW^{\omega}$, is the limit language the of $V$ plus $V$ concatenated on the left of the limit language of $W^{\omega}$, which is simply $VW^{\omega}$ since an $\omega$-power is its own limit language.
So $\overline{VW^{\omega}} = V^{\omega} \cup VW^{\omega}$.

For any open neighborhood $U$ containing $\nu_r(x)$, we know there is some prefix $x'$ of $x$ such that $x'$ is in $VW^*$ and the image under $\nu_r$ of every element of $VW^{\omega}$ with prefix $x'$ is also in $U$.
Since we assume $VW^{\omega}$ is uncountable, it is necessarily the case that $W^{\omega}$ is uncountable.
Hence $\nu_r(x'W^{\omega}) \subseteq U$ is uncountable and contains $\nu_r(x)$, witnessing that $x$ cannot be in $L^{\rm{ctbl}}$ nor in $L^{iso}$ if it is in an uncountable $VW^{\omega}$ component, or the closure thereof.

Conversely, suppose that $x \in L$ but $x$ is not in the closure of any uncountable component.
Then for a small enough neighborhood $U$ containing $\nu_r(x)$, we conclude that for each  V-W component of $L$, $U \cap \nu_r(VW^{\omega}) \neq \emptyset$ if and only if $VW^{\omega}$ is a countable component.
Hence $U \cap \nu_r(L) \subseteq \nu_r(\bigcup_{i}V_iW_i^{\omega})$ where $i$ ranges over only the indices of countable V-W components.
Thus $x \in \nu_r(\bigcup_{i}V_iW_i^{\omega})$ where $i$ ranges over only the indices of countable V-W components, implying that $x \in L^{iso}$ and $L^{\rm{ctbl}}$.

It is also immediate that $L^{iso}$ and $L^{\rm{ctbl}}$ contain elements from only finitely many tail-equivalence class, as shown in Proposition \ref{sparseequiv}.
\end{proof}
\end{lem}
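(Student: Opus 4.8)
The plan is to fix a B\"uchi decomposition $L=\bigcup_{i=1}^{n}V_iW_i^{\omega}$ with all $V_i,W_i\subseteq[r]^{*}$ regular, and to sort the components by the dichotomy already implicit in the proof of Proposition~\ref{sparseequiv}: for each $i$ either $W_i^{*}\subseteq c_i^{*}$ for a single word $c_i$ (so $W_i^{\omega}=c_i^{\omega}$ and $V_iW_i^{\omega}$ is countable), or $W_i^{\omega}$ is uncountable (and then, since $V_i$ is a countable language, $V_iW_i^{\omega}$ is uncountable). The one topological fact I would isolate first is that \emph{if $W_i^{\omega}$ is uncountable then every point of $\nu_r(V_iW_i^{\omega})$ is a condensation point of $A$}, i.e.\ every neighbourhood of it meets $A$ in an uncountable set. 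To see this, take $y=vw_1w_2\cdots\in V_iW_i^{\omega}$ and a neighbourhood $U$ of $\nu_r(y)$: for $k$ large, every $\omega$-word beginning with $vw_1\cdots w_k$ is mapped by $\nu_r$ into $U$, so $\nu_r(vw_1\cdots w_k\,W_i^{\omega})\subseteq U\cap A$; and since $z\mapsto vw_1\cdots w_k z$ is injective and $\nu_r$ is at most two-to-one, this image has the (uncountable) cardinality of $W_i^{\omega}$.

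For the backward direction I would prove the contrapositive: if $\nu_r(x)$ lies in the closure of some uncountable component $\nu_r(V_iW_i^{\omega})$, then $x\notin L^{iso}\cup L^{\rm{ctbl}}$, equivalently $\nu_r(x)$ is a condensation point of $A$. If $\nu_r(x)\in\nu_r(V_iW_i^{\omega})$ this is exactly the fact just proved. If instead $\nu_r(x)$ is a genuine limit of points $z_k\in\nu_r(V_iW_i^{\omega})$, then each $z_k$ is a condensation point of $A$ by that fact, and condensation points of a fixed set are closed under limits (an open neighbourhood of $\nu_r(x)$ contains some $z_k$ and is then also a neighbourhood of it), so $\nu_r(x)$ is again a condensation point.

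For the forward direction, suppose $\nu_r(x)$ is not in the closure of any uncountable component. Those closures form finitely many closed subsets of $\R$, none containing $\nu_r(x)$, so there is an open neighbourhood $U\ni\nu_r(x)$ disjoint from all of them, hence disjoint from $\nu_r(V_iW_i^{\omega})$ for every uncountable $i$; therefore $U\cap A$ is contained in the union of the countable sets $\nu_r(V_jW_j^{\omega})$ over the countable components $j$, so $U\cap A$ is countable and $x\in L^{\rm{ctbl}}$ (and $x\in L^{iso}$ if $\nu_r(x)$ happens to be isolated in $A$). Combining the two directions, $L^{iso}\cup L^{\rm{ctbl}}$ consists precisely of those $x\in L$ that lie in a countable component $V_jW_j^{\omega}$; for such a component $W_j^{\omega}=c_j^{\omega}$, so every element of $V_jW_j^{\omega}=V_jc_j^{\omega}$ is tail-equivalent to $c_j^{\omega}$, and since there are only finitely many components this gives the ``moreover'' clause.

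I expect the only delicate point to be the bookkeeping around the two $r$-representations of $r$-adic rationals: the statements above should be read at the level of $\omega$-words in $([r]^{d})^{\omega}$, so that ``prefix'', ``cylinder'' and ``tail-equivalence class'' are unambiguous, and one must keep in mind that $\nu_r$ collapses at most two $\omega$-words to a point — which is exactly why ``at most two-to-one'' suffices in the cardinality count. It is also worth recording separately, as an input to this lemma, the dichotomy that a regular $W^{\omega}$ is either countable with $W^{*}\subseteq c^{*}$, or uncountable; this is where the regularity of the languages, via the argument already used for Proposition~\ref{sparseequiv} (and Corollary~6.2.5 of \cite{L02}), genuinely enters.
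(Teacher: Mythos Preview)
Your argument is correct and follows essentially the same strategy as the paper's: a cylinder/prefix argument for the backward direction and ``avoid finitely many closed sets'' for the forward one, with the dichotomy on $W^{\omega}$ supplying the tail-equivalence claim. The only organizational difference is that the paper handles the closure case in one stroke via the limit-language description of $\overline{VW^{\omega}}$, whereas you first prove condensation for points of $\nu_r(V_iW_i^{\omega})$ and then pass to the closure by ``limits of condensation points are condensation points''; both are fine.

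One small slip to fix: in your ``moreover'' paragraph, the claimed biconditional ``$L^{iso}\cup L^{\rm ctbl}$ consists \emph{precisely} of those $x\in L$ that lie in a countable component'' is too strong---an $\omega$-word $x$ can lie in a countable component while $\nu_r(x)$ sits in the closure of an uncountable one (e.g.\ $x=10^{\omega}$ in base $2$, with the uncountable component $0\{0,1\}^{\omega}$). What actually follows from your equivalence is only the containment $L^{iso}\cup L^{\rm ctbl}\subseteq\bigcup_{j\ \text{countable}}V_jW_j^{\omega}$, and that containment is exactly what you use in the next sentence, so the conclusion stands once you replace ``precisely'' by ``is contained in''.
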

 
We will see next that if $VW^{\omega}$ is uncountable, then $\nu_r(VW^{\omega})$ has infinite Cantor-Bendixson rank, and that the set $A^{\rm{ctbl}}$ has finite Cantor-Bendixson rank.

\begin{rmk}
Suppose that $A$ is closed, and $x \in L^{\rm{ctbl}} \setminus L^{iso}$.  
Then there exists some $k \in \N$ and some $r$-sparse subset $B$ such that $\nu_r(x_{[0,k]}[r]^{\omega} \cap L) \subseteq B$.
\begin{proof}
Let $U \ni x$ be an open set that witnesses $x \in L^{\rm{ctbl}} \setminus L^{iso}$, i.e.\ $A \cap U$ is countably infinite and contains $x$.
Then there exists a least $k \in \N$ such if $y \in L$ has prefix $x_{[0,k]}$, then $\nu_r(y) \in U$.
Without loss of generality, we may choose $U$ such that $\overline{U \cap A}= \nu_r(x_{[0,k]}[r]^{\omega} \cap L)$.

We observe that $\nu_r(x_{[0,k]}[r]^{\omega} \cap L)$ is closed since $\nu_r(x_{[0,k]}[r]^{\omega})$ and $\nu_r(L)$ both are.
Since $\overline{U \cap A}$ is countable and infinite by hypothesis, it follows from Lemma \ref{TFAE}
that $\nu_r(x_{[0,k]}[r]^{\omega} \cap L)$ must be contained in an $r$-sparse set.
\end{proof}
\end{rmk}

There is one last piece to put in place before we can employ these lemmas to prove the primary result of this section.
The final step is to show that for $A$ a closed $r$-regular set, not only is the set of points that accumulate to any given $x \in A^{\rm{ctbl}}$ an $r$-sparse set, i.e. one with finite Cantor-Bendixson rank and vanishing Cantor-Bendixson derivative, but also there is a uniform bound on the Cantor-Bendixson rank of the set of elements in $A$ that are sufficiently close to $x \in A^{\rm{ctbl}}$.
Here, ``sufficiently close'' means there is an open interval containing $x$, and whose diameter depends on $x$, in which the intersection with $A$ has a Cantor-Bendixson rank less than the uniform bound, which does not depend on $x$.

\begin{lem}\label{uniformCB}
Suppose that $L$ is closed.
Then there exists $N \in \N$ such that for every $x \in L^{\rm{ctbl}}$ there is some $k \in \N$ such that the Cantor-Bendixson rank of $\nu_r([x]_{\simeq_k,L})$ is at most $N$.
\begin{proof} 
Since the Cantor-Bendixson rank of a set $B$ of real numbers is equal to the Cantor-Bendixson rank of $cB$ for a nonzero real number $c$ and to the rank of $B+a$ for a real number $a$, we see that the Cantor-Bendixson rank of $\nu_r([x]_{\simeq_k,L})$ does not depend on the choice of equivalence class representative $x$ but only on the states in each V-W component reach by the length-$k$ prefix of $x$.  As $x$ runs over elements of $L^{\rm{ctbl}}$, there are only finitely many possible states reached by prefixes of $x$ within V-W components, which gives an upper bound on the Cantor-Bendixson ranks that does not depend on $k$.  

Recall that $[x]_{\simeq_k,L}=\{y\in L: y_{[0,k]}=x_{[0,k]}\}$.
We suppose not, i.e. that there exists a sequence $(x_n)_{n \in \N}$ such that for each $n \in \N$ and each $k \in \N$ the set $[x_n]_{\simeq k,L}$ has Cantor-Bendixson rank greater than $n$.
We can decompose $L$ into $E= \overline{ \operatorname{int}(\nu_r(L))}$, the closure of the interior of $L$, and $L \setminus E$, where the latter is nowhere dense on an open subset of $[0,1]$.
Since the Cantor-Bendixson derivative on an interval is itself, we may disregard $E$ when bounding the Cantor-Bendixson rank of $\nu_r([x]_{\simeq_k,L})$ for $x \in L$.
So, without loss of generality, we may assume $L = L \setminus E$, i.\ e.\ that $L$ has no interior.

By the pigeonhole principle, we may take all of the $x_n$'s to be in the same V-W component of $L$.
Since we have assumed that $L$ has no interior, for each $x_n$ there is $k_n \in \N$ such that $[x_n]_{\simeq_{k_n},L}$ is $r$-sparse. 
By Lemma \ref{TFAE}, this means the Cantor-Bendixson rank of $[x_n]_{\simeq_{k_n},L}$ is some finite $m_n$ with $m_n >n$ by hypothesis.
Applying 
sequential compactness of the $[0,1]$ interval
to the sequence $(x_n)_{n \in \N}$, 
we conclude that there is an infinite subsequence of $(x_n)_{n \in \N}$ such that for $n',\ell >n$ it holds that $x_{n[0,\ell]} = x_{n'[0,\ell]}$.
Hence we may construct an infinite subsequence $(x_{n(\ell )})_{\ell \in \N}$ in which $x_{n(\ell +1)[0,\ell +1]}$ extends $x_{n(\ell )[0,\ell ]}$ for each $\ell \in \N$.
Without loss of generality, let $(x_n)_{n \in \N}$ now denote this subsequence.

Since $x_{n+1}$ and $x_n$ have a shared prefix of at least length $n$ for each $n\in \N$, we will now establish that that the numbers $k_n,k_{n+1}$, witnessing respectively that $[x_n]_{\simeq_{k_n},L}$ and $[x_{n+1}]_{\simeq_{k_{n+1}},L}$ are $r$-sparse, satisfies $k_{n+1} > k_n$.
Otherwise, the sequence of $k_n$'s would stabilize, implying that eventually $[x_n]_{\simeq_{k_n},L}=[x_{n'}]_{\simeq_{k_{n'}},L}$ for sufficiently large $n'>n$, which in turn implies the Cantor-Bendixson rank of these sets stabilizes.
We reach a contradiction because by definition of Cantor-Bendixson rank, removing the isolated points of $[x_n]_{\simeq_{k_n},L}$ iteratively $m_n:= CB([x_n]_{\simeq_{k_n},L})$ times should yield the empty set.
Yet by definition we know that for sufficiently large $n'>m_n>n$ we have 
$$[x_{n'}]_{\simeq_{k_{n'}},L} = \{y \in L: y_{[0,k_{n'}]}=x_{n'[0,k_{n'}]}\} \subseteq \{y \in L: y_{[0,k_n]}=x_{n[0,k_n]}\}=[x_n]_{\simeq_{k_n},L}$$ 
since $k_n \leq k_{n'}$ ensures that any such $y_{[0,k_{n'}]}$ contains the prefix $x_{n[0,k_n]}$.
This would mean that removing the isolated points of $[x_{n'}]_{\simeq_{k_{n'}},L}$ iteratively $m_n$ times yields the empty set, contradicting that $m_{n'}>n'$ is strictly greater than $m_n$.

So we conclude that there is some $N \in \N$ such that for each $x \in L^{\rm{ctbl}}$ there is eventually a $k \in \N$ such that the Cantor-Bendixson rank of $[x]_{\simeq_k,L}$ stabilizes to a number less than or equal to $N$.
\end{proof}
\end{lem}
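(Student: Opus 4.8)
\emph{Proof proposal.} The plan is to first reduce the statement to a finiteness fact about the automata presenting $L$, and then to upgrade this to a \emph{uniform} bound by a compactness argument. The starting observation is that the Cantor--Bendixson rank of a subset of $\R$ is unchanged under affine bijections $t\mapsto at+b$ with $a\neq 0$, so the rank of $\nu_r([x]_{\simeq_k,L})$ cannot depend on the numerical value of the prefix $x_{[0,k]}$, only on the combinatorial data it determines: for each V-W component $V_iW_i^{\omega}$ of $L$ (fix such a decomposition by Theorem \ref{buchi}), the prefix $x_{[0,k]}$ leaves behind one of finitely many ``residual continuation languages,'' since there are finitely many components and each is presented by finite automata. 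Hence the set of Cantor--Bendixson ranks of the sets $\nu_r([x]_{\simeq_k,L})$, as $x$ runs over $L$ and $k$ over $\N$, is a finite set of ordinals \emph{provided each of its members is a finite ordinal}. The lemma then follows once we show: (a) for every $x\in L^{\rm{ctbl}}$ there is a $k$ with $\nu_r([x]_{\simeq_k,L})$ of finite Cantor--Bendixson rank; and (b) the ranks arising in (a) are bounded independently of $x$.

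For (a), fix $x\in L^{\rm{ctbl}}$. By Lemma \ref{ctbl}, $\nu_r(x)$ lies in the closure of no uncountable V-W component of $L$, so for all large enough $k$ every component meeting $\nu_r([x]_{\simeq_k,L})$ is countable. Since $L$ is closed, $[x]_{\simeq_k,L}$ is then a closed $r$-regular set all of whose words are tail-equivalent to one of finitely many periodic words, so $\nu_r([x]_{\simeq_k,L})$ is countable, hence $r$-sparse by Lemma \ref{TFAE}, hence of finite Cantor--Bendixson rank by Lemma \ref{CB-rank}.

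For (b) I would argue by contradiction and compactness. Suppose there were $x_n\in L^{\rm{ctbl}}$ for which every $\nu_r([x_n]_{\simeq_k,L})$ has rank exceeding $n$. First reduce to the case that $\nu_r(L)$ has empty interior: with $E=\overline{\operatorname{int}(\nu_r(L))}$, one checks that no point of $L^{\rm{ctbl}}$ maps into $E$, that $L$ and $L\setminus\nu_r^{-1}(E)$ agree in a neighborhood of each point of $L^{\rm{ctbl}}$, and that $E$ contributes nothing to Cantor--Bendixson ranks; so we may replace $L$ by the (still closed and $r$-regular) part with empty interior. By the pigeonhole principle, arrange that all $x_n$ lie in a single V-W component. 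By (a) each $[x_n]_{\simeq_{k_n},L}$ is $r$-sparse for some $k_n$ and has finite rank $m_n>n$. By sequential compactness of $[0,1]$, pass to a subsequence along which the $x_n$ share prefixes of length at least $n$; one then checks the $k_n$ may be taken strictly increasing, since otherwise they stabilize, and then so do the sets $[x_n]_{\simeq_{k_n},L}$, bounding the $m_n$. But then for $n'\gg n$ we get an inclusion of closed sets $[x_{n'}]_{\simeq_{k_{n'}},L}\subseteq [x_n]_{\simeq_{k_n},L}$, whence $m_{n'}=\operatorname{CB}\bigl(\nu_r([x_{n'}]_{\simeq_{k_{n'}},L})\bigr)\le\operatorname{CB}\bigl(\nu_r([x_n]_{\simeq_{k_n},L})\bigr)=m_n$, contradicting $m_{n'}>n'>m_n$.

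The main obstacle is step (b): the slogan ``there are only finitely many local pictures'' does not by itself produce a bound, because the prefix length $k$ needed before $[x]_{\simeq_k,L}$ becomes sparse grows with $x$, so one is not selecting from a fixed finite list uniformly. The nesting-plus-compactness argument is exactly what converts a strictly increasing sequence of $k_n$'s into an obstruction to strictly increasing Cantor--Bendixson ranks. A secondary, purely technical matter is justifying the reduction to the empty-interior case and checking that deleting $E$ changes neither $L^{\rm{ctbl}}$ nor the relevant local Cantor--Bendixson data.
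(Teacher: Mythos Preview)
Your proposal is correct and follows essentially the same route as the paper's proof: the affine-invariance observation reducing to finitely many ``residual'' automaton configurations, the reduction to the empty-interior case, and then the contradiction argument via sequential compactness, strictly increasing $k_n$, and nesting of the $[x_n]_{\simeq_{k_n},L}$ to force $m_{n'}\le m_n$. Your explicit separation into step (a) (existence of a good $k$ for each $x\in L^{\rm ctbl}$, via Lemma~\ref{ctbl}) and step (b) (uniformity) is a slightly cleaner presentation of what the paper does in one pass, but the substance is the same.
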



Recall that within Euclidean space a \emph{Cantor set}, used in the most general sense, is a nonempty subset of $\R^d$ that is compact, has no isolated points, and has no interior.

\begin{thm}[Theorem \ref{tame}]\label{hdimCantor}
If $A$ is a bounded $r$-regular subset of $[0,1]^d$ and there is some $i \in [d]$ such that $0<d_H(\overline{\pi_i(A)})<1$, then there exists a unary Cantor set $X$ definable in $\calR_A = (\R,<,+,0,1,A)$.
\begin{proof}
Let $i \in [d]$ be such that $A_i:=\overline{\pi_i(A)}$ witness the hypothesis of the claim, i.e. $0<d_H(A_i)<1$.
Note that $A_i$ is compact since $A$ is bounded.
We will definably remove a countable set of points from $A_i$ such that the remaining set $X$ is closed and has no isolated points.
Then because $d_H(A_i)$ is between $0$ and $1$, we conclude the same is true for $X$.

Recall that we use $A_i^{\rm{ctbl}}$ to denote the subset of $A_i$ such that $x \in A_i^{\rm{ctbl}}$ precisely if there is some open interval $I$ containing $x$ such that $A_i \cap I$ is countable.
By Lemma \ref{uniformCB}, we know that there is some $N \in \N$ such that if we strip away the isolated points of $A_i^{iso} \cup A_i^{\rm{ctbl}}$ at least $N$ times, we are left with the empty set.
By Lemma \ref{ctbl}, we know that $A_i^{iso} \cup A_i^{\rm{ctbl}}$ is the part of $A_i$ whose $r$-representations are contained in the complement of the closure of the uncountable $V_kW_k^{\omega} $ components of $L$. 
Call the image under $\nu_r$ of the closure of these uncountable components $X$.

Hence we can write $A_i$ as the disjoint union of $A_i^{iso} \cup A_i^{\rm{ctbl}}$ and $X$.
When we take the $N$-th Cantor-Bendixson derivative of $A_i$, i.e. strip away the isolated points iteratively $N$ times, what remains is $X$, 
since $X$ being a closed set without isolated points makes it a perfect set, and hence its own Cantor-Bendixson derivative.
Notably, taking the Cantor-Bendixson derivative $N$ times is definable in the language $(<,+,0,1)$, so we can isolate $X$ definably.

Lastly, to see that $X$ is a Cantor set we determine that it cannot have interior.  If so, it would have a subset with Hausdorff dimension $1$, since the Hausdorff $1$-measure is positive whenever Lebesgue measure is positive.
\end{proof}
\end{thm}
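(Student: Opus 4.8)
The plan is to carve the Cantor set out of $A_i := \overline{\pi_i(A)}$, the coordinate whose closure has intermediate Hausdorff dimension. First I would record that $A_i$ is $\emptyset$-definable in $\calR_A$, since $\pi_i$ and the order-topological closure are both definable in $(\R,<)$; that $A_i$ is compact; and that $A_i$ is a closed $r$-regular subset of $[0,1]$, because projections of $r$-regular sets are $r$-regular and the closure of an $r$-regular set is $r$-regular via $\overline{\calA}$. Fix a closed regular $L \subseteq [r]^{\omega}$ with $\nu_r(L) = A_i$ and a B\"uchi decomposition $L = \bigcup_{k=1}^{n} V_k W_k^{\omega}$ as in Theorem \ref{buchi}, and set $X := \bigcup \{\, \nu_r(\overline{V_k W_k^{\omega}}) : V_k W_k^{\omega}\ \text{uncountable} \,\}$. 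I will show: (i) $X$ is closed, nonempty, and perfect; (ii) $X = A_i^{(N)}$ for a fixed $N \in \N$, so $X$ is $\emptyset$-definable in $\calR_A$; (iii) $0 < d_H(X) < 1$. Together these exhibit $X$ as a definable unary Cantor set.

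For (i): $X$ is a finite union of closed sets, hence closed, and it is nonempty because $d_H(A_i) > 0$ forces $A_i$ uncountable, so some component $V_k W_k^{\omega}$ is uncountable. That $X$ has no isolated points is the backward argument of Lemma \ref{ctbl}: any neighbourhood of a point $\nu_r(x) \in \nu_r(\overline{V_k W_k^{\omega}})$ with $V_k W_k^{\omega}$ uncountable contains $\nu_r(x' W_k^{\omega})$ for a sufficiently long prefix $x' \in V_k W_k^{*}$ of $x$, and this set is infinite since $W_k^{\omega}$ is uncountable. By Lemma \ref{ctbl} again, every point of $A_i \setminus X$ carries an $r$-representation in $L^{iso} \cup L^{\rm{ctbl}}$, and those representations lie in only finitely many tail-equivalence classes, so $A_i \setminus X$ is countable; thus $X$ is obtained from $A_i$ by deleting a countable set.

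For (ii): since $X$ is perfect, $X^{(1)} = X$, so $X \subseteq A_i^{(N)}$ for every $N$. For the reverse inclusion I would invoke Lemma \ref{uniformCB} — applicable since $A_i$ has empty interior, as $d_H(A_i) < 1$ — which produces a fixed $N \in \N$ such that for every $x \in L^{\rm{ctbl}}$ there is $k$ with the $N$-th Cantor--Bendixson derivative of $\nu_r([x]_{\simeq_k,L})$ empty (the cylinder sublanguage being $r$-sparse of bounded rank). Given $y \in A_i \setminus X$, pick a representation $x \in L^{iso} \cup L^{\rm{ctbl}}$ of $y$: if $x \in L^{iso}$ then $y$ is isolated in $A_i$ and disappears at the first derivative; if $x \in L^{\rm{ctbl}}$ then, writing $J = \nu_r(x_{[0,k]}[r]^{\omega})$ for the corresponding length-$r^{-k}$ interval and picking an open $U$ with $y \in U \subseteq J^{\circ}$, one checks $A_i \cap U \subseteq \nu_r([x]_{\simeq_k,L})$, so by monotonicity and localisation of the Cantor--Bendixson derivative to open sets, $y$ is removed within $N$ iterations. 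Choosing $U \subseteq J^{\circ}$ is automatic once $y$ is not $r$-adic rational, and the countably many $r$-adic rational points of $A_i$ are each either isolated or a one-sided limit, handled using the cylinder lying to the appropriate side. Hence $A_i^{(N)} \subseteq X$, so $X = A_i^{(N)}$; since $B \mapsto B \setminus B^{iso}$ is $\emptyset$-definable in $(\R,<)$ and $N$ is fixed, $X$ is $\emptyset$-definable in $\calR_A$.

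For (iii): $X$ is a nonempty perfect compact subset of $\R$, hence uncountable, and it is closed and $r$-regular; so it is not $r$-sparse by Lemma \ref{TFAE}, whence $d_H(X) > 0$ by Lemma \ref{dimH0}. If $X$ had interior it would contain an interval, hence a set of positive Lebesgue measure, forcing $\mu^1_H(X) > 0$ and $d_H(X) = 1$, contradicting $d_H(X) \le d_H(A_i) < 1$. So $X$ is compact, perfect, and has empty interior — a Cantor set — and it is definable in $\calR_A$, as required. The crux I expect is step (ii): converting the automaton-combinatorial bound of Lemma \ref{uniformCB} (a uniform cap on Cantor--Bendixson rank inside cylinder sublanguages) into the purely topological assertion that the global $N$-th Cantor--Bendixson derivative of $A_i$ removes exactly $A_i \setminus X$, which is where the matching of cylinders to honest open intervals and the bookkeeping at the $r$-adic rational boundary points must be carried out with care.
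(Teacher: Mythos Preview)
Your proposal is correct and follows essentially the same route as the paper: define $X$ as the union of the $\nu_r$-images of the closures of the uncountable $V_kW_k^{\omega}$ components, identify it with the $N$-th Cantor--Bendixson derivative of $A_i$ via Lemmas~\ref{ctbl} and~\ref{uniformCB}, and conclude it is a Cantor set from the dimension constraint. Your treatment of step~(ii) is in fact more careful than the paper's (which asserts $A_i^{(N)}=X$ rather briskly), and your justification of $d_H(X)>0$ via Lemmas~\ref{TFAE} and~\ref{dimH0} is a valid alternative to the paper's simpler observation that removing a countable set does not change Hausdorff dimension.
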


\begin{cor}[Theorem \ref{r-reg dichotomy}]\label{r-reg tame}
For $A\subseteq [0,1]^d$ an $r$-regular set such that $d_H(\overline{\pi_i(A)})<1$ for all $i \in [d]$, the following are equivalent:
\begin{enumerate}
\item{$A$ is $r$-sparse;}
\item{$d_H(\overline{\pi_i(A)})=0$ for all $i\in [d]$;}
\item{$\calR_A$ is d-minimal;}
\item{$\calR_A$ has NIP;}
\item{$\calR_A$ has NTP$_2$.}
\end{enumerate}
\begin{proof}
As above, set $A_i:=\pi_i(A)$ for $i \in [d]$.
The equivalence of $(1)$ and $(2)$ follows from Lemma \ref{dimH0} and Remark \ref{sparse-closure}.
To see that $(2)$ implies $(3)$, we note that if $d_H(\overline{A_i})=0$ for each $i \in [d]$, then by Lemma \ref{dimH0} we know that $\overline{A_i}$ is $r$-sparse, and hence countable, for each $i \in [d]$.
We know $\overline{A} \subseteq \overline{A_1}\times \cdots \times \overline{A_d}$ since coordinate projection is a continuous open map, and the latter set is countable.
We conclude that $A$ is countable, thus by Lemma \ref{TFAE} the set $A$ is $r$-sparse.
Hence by Theorem \ref{sparseD}, we conclude that $\calR_A$ is d-minimal.

For $(3)$ implies $(4)$, this follows from \cite{GH11}, in which that authors show that certain d-minimal structures, which include $\calR_A$, have NIP.
That $(4)$ implies $(5)$ follows from the definitions of NIP and NTP$_2$.

Finally, to see that $(5)$ implies $(2)$, we show the contrapositive.
Suppose that $d_H(\overline{\pi_i(A)})>0$ for some $i\in [d]$.
By hypothesis, $d_H(\overline{\pi_i(A)})<1$ for all $i\in [d]$, so by Theorem \ref{tame} we conclude that a Cantor set is definable in $\calR_A$.
By \cite{HW19}, this also means the structure interprets the monadic second order theory of the natural numbers with the successor function, which has TP$_2$ (and thus does not have NTP$_2$).
\end{proof}
\end{cor}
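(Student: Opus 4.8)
The plan is to prove the five equivalences cyclically: $(1)\Leftrightarrow(2)$, then $(2)\Rightarrow(3)\Rightarrow(4)\Rightarrow(5)\Rightarrow(2)$. This way each arrow is either a short transfer of a model-theoretic property or a direct appeal to something already established, and the genuinely geometric content is confined to a single implication.

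For $(1)\Leftrightarrow(2)$ I would reduce to the closed case so that Lemma \ref{dimH0} applies on the nose. Set $A_i:=\overline{\pi_i(A)}$; this is a closed $r$-regular subset of $[0,1]$, since projections and closures of $r$-regular sets are again $r$-regular. If $A$ is $r$-sparse, then each $\pi_i(A)$ is $r$-sparse (a letter-to-letter projection can only decrease the number of distinct length-$n$ prefixes), hence $A_i$ is $r$-sparse by Remark \ref{sparse-closure}, hence countable by Lemma \ref{TFAE}, so $d_H(A_i)=0$. Conversely, if $d_H(A_i)=0$ for every $i$, then Lemma \ref{dimH0} makes each $A_i$ $r$-sparse and therefore countable, so $\overline{A}\subseteq A_1\times\cdots\times A_d$ is countable; then $\overline{A}$ is $r$-sparse by Lemma \ref{TFAE}, and hence $A$ is $r$-sparse as well, its representations being among those of $\overline{A}$ so that its prefix counts are dominated by the polynomially growing prefix counts of $\overline{A}$. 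The same countability observation gives $(2)\Rightarrow(3)$ at once: under $(2)$ the set $A$ is $r$-sparse, so Corollary \ref{sparseD} shows $\calR_A$ is d-minimal.

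The two middle implications are routine. For $(3)\Rightarrow(4)$ I would cite \cite{GH11}, which shows that the relevant class of d-minimal expansions of $(\R,<,+,0)$ --- a class that includes $\calR_A$ --- has NIP. For $(4)\Rightarrow(5)$ I would invoke the standard implication that NIP theories have NTP$_2$.

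The implication $(5)\Rightarrow(2)$ carries all the geometric weight, and here I would argue by contraposition, reducing to the Cantor-set machinery already in hand. Suppose $d_H(\overline{\pi_i(A)})>0$ for some $i$; together with the standing hypothesis $d_H(\overline{\pi_i(A)})<1$ this yields $0<d_H(\overline{\pi_i(A)})<1$, so Theorem \ref{tame} produces a Cantor set definable in $\calR_A$. By the theorem of Hieronymi and Walsberg \cite{HW19}, any expansion of $(\R,<,+,0)$ defining a Cantor set interprets the monadic second-order theory of $(\N,\mathrm{succ})$, which has TP$_2$; hence $\calR_A$ fails NTP$_2$, contradicting $(5)$. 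I expect the only real obstacle to lie inside Theorem \ref{hdimCantor}, not in this corollary: one must extract, definably and uniformly, the ``perfect core'' of $\overline{\pi_i(A)}$ by applying the Cantor--Bendixson derivative a bounded number of times --- the bound furnished by Lemma \ref{uniformCB} --- and then check this core is a bona fide Cantor set, compact and without isolated points by construction and without interior because a subset with interior would have a subset of Hausdorff dimension $1$, contradicting $d_H<1$.
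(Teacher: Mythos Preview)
Your proposal is correct and follows essentially the same cycle as the paper: $(1)\Leftrightarrow(2)$ via Lemma~\ref{dimH0} and Remark~\ref{sparse-closure}, $(2)\Rightarrow(3)$ via countability of $\overline{A}$ and Corollary~\ref{sparseD}, $(3)\Rightarrow(4)$ via \cite{GH11}, $(4)\Rightarrow(5)$ trivially, and $(5)\Rightarrow(2)$ by contraposition through Theorem~\ref{hdimCantor} and \cite{HW19}. Your treatment of $(1)\Leftrightarrow(2)$ is in fact more careful than the paper's one-line citation, since you explicitly pass to the closed sets $A_i=\overline{\pi_i(A)}$ before invoking Lemma~\ref{dimH0} and then argue back down to $A$; the paper handles this step tersely and, like you, relies on the observation that a countable $r$-regular set (closed or not) must be $r$-sparse.
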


\section{Simultaneous Sparsity in multiplicatively independent bases}
In this section, we consider the intersection of two sparse sets that are regular with respect to two multiplicatively independent bases.
We will show that sparse $k$- and $\ell$-regular sets have finite intersection when $k$ and $\ell$ are multiplicatively independent and give upper bounds on the size of the intersection in terms of the accepting B\"uchi automata for these sets.  We note that this work shares some overlap with the paper \cite{AB}.

\begin{lemma} \label{rem:sparse}
Let $k\ge 2$ be a natural number and let $S$ be a non-empty sparse $k$-regular subset of $[0,1]$. Then $S$ is a finite union of sets $S_1\cup S_2\cup \cdots \cup S_m$ such that for each $i\in \{1,2,\ldots ,m\}$ we have:
\begin{enumerate}
    \item[(i)] there exists $s=s_i \ge 0$, and $c_0,\ldots ,c_s\in \mathbb{Q}$ with the property that for some $\ell>0$ we have $(k^{\ell}-1)c_j\in \mathbb{Z}$ for $j=0,\ldots ,s$ and $c_0 + c_1 + \cdots + c_s \in \mathbb{Z}_{\ge 0}$;
    \item[(ii)] there exist positive integers $\delta_1,\ldots ,\delta_s$
\end{enumerate}
such that the we have the following containment:
\begin{equation} \label{eq: form}
S_i \subseteq \left\{ c_0 k^q  + c_1 k^{q+\delta_s n_s} + c_2 k^{q+\delta_s n_s + \delta_{s-1} n_{s-1}}\cdots + c_s k^{q+\delta_s n_s+\cdots +\delta_1 n_1} \colon n_1,\ldots ,n_s\ge 0, q \in -\mathbb{N}\right\}.\end{equation}
\end{lemma}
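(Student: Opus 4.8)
The plan is to start from the structural description of sparse $k$-regular sets already in hand and massage it into the required normal form. By Remark~\ref{rmk:samelength}, Remark~\ref{sparse-description0}, and Proposition~\ref{sparseequiv}, the set $S$ is a finite union of sets of the form $\nu_r(u_1v_1^* u_2 v_2^* \cdots u_{d-1}v_{d-1}^* u_d v_d^{\omega})$, and each such set, after the computation of Remark~\ref{sparse-description0}, has the shape
\begin{equation*}
\left\{ c_0 + c_1 k^{-\delta_1 n_1} + c_2 k^{-\delta_1 n_1 - \delta_2 n_2} + \cdots + c_{d-1} k^{-\delta_1 n_1 - \cdots - \delta_{d-1} n_{d-1}} \colon n_1,\ldots,n_{d-1} \ge 0 \right\},
\end{equation*}
where the $c_j \in \mathbb{Q}$ and $\delta_j = |v_j|$ are positive integers. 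So the first step is simply to invoke these earlier results; there is essentially nothing new to prove here, just a reindexing of exponents.

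The second step is a change of variables to match the indexing in \eqref{eq: form}. In the target form the innermost exponent has the largest nesting depth, so I would set $q = -(\delta_1 n_1 + \cdots + \delta_{d-1}n_{d-1})$ run over $-\mathbb{N}$ (or rather bound it: the point of writing $q \in -\mathbb{N}$ with a free parameter is that $q$ absorbs the full accumulated exponent), and then relabel the remaining partial sums. Concretely, writing $s = d-1$ and reversing the order of the indices $n_1,\dots,n_s$, the $j$-th summand $c_j k^{-(\delta_1 n_1 + \cdots + \delta_j n_j)}$ becomes, after the substitution $q = -(\delta_1 n_1 + \cdots + \delta_s n_s)$, an expression of the form $c_j k^{q + \delta_s n_s + \cdots + \delta_{j+1} n_{j+1}}$ with the old indices renamed. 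This gives the containment \eqref{eq: form} (a containment rather than equality because I am allowing $q$ to range over all of $-\mathbb{N}$ while the $n_i$ independently range over $\mathbb{N}$, which only enlarges the set). Matching the constants $c_0,\dots,c_s$ with the constants in the target and shuffling the $\delta_i$'s into the required order is bookkeeping.

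The third step is to verify the arithmetic conditions in item (i): that there is $\ell > 0$ with $(k^\ell - 1)c_j \in \mathbb{Z}$ for all $j$, and that $c_0 + c_1 + \cdots + c_s \in \mathbb{Z}_{\ge 0}$. The first is automatic because each $c_j$ is a rational number whose denominator, coming from a periodic $k$-ary expansion of period dividing some $N$, divides $k^N - 1$ up to a power of $k$; absorbing the power of $k$ into $q$ (which ranges over $-\mathbb{N}$) lets me assume the denominators divide $k^\ell - 1$ for a common $\ell$. For the second, note that $c_0 + c_1 + \cdots + c_s$ is precisely the value obtained by sending every $n_i \to \infty$... no: rather, one evaluates the defining expression at a suitable specialization. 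Actually the cleanest route is: $c_0$ is the limit point $\nu_r(u_1 v_1^\omega)$-type value and $c_0 + \cdots + c_s$ is (up to sign conventions and the $q \to -\infty$ normalization) the value of the original word at $n_1 = \cdots = n_s = 0$, scaled by an appropriate power of $k$; since the original set lives in $[0,1]$ and consists of genuine $k$-ary values, this scaled value is a nonnegative integer once we have cleared denominators. I would make this precise by tracking what the normalization $q \in -\mathbb{N}$ does to a single representative word.

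The main obstacle I anticipate is not conceptual but notational: getting the direction of the nesting, the order of the $\delta_i$'s, and the sign of $q$ all consistent with the exact form \eqref{eq: form} as written, and in particular pinning down why $c_0 + \cdots + c_s$ lands in $\mathbb{Z}_{\ge 0}$ rather than merely in $\mathbb{Q}$ or $\mathbb{Z}$. This last integrality-and-positivity claim is the one place where I would need to actually use that $S \subseteq [0,1]$ and that the $c_j$ arise from a bona fide eventually periodic base-$k$ expansion, rather than from an arbitrary expression of the shape in Remark~\ref{sparse-description0}; everything else is a reorganization of results already established in the preceding sections.
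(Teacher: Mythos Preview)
Your reindexing via Remark~\ref{sparse-description0} does give the containment~\eqref{eq: form}, but the paper takes a different and more direct route to the arithmetic conditions in~(i). Rather than starting from the fractional form with negative exponents, the paper shifts the radix point to the \emph{right} of the finite prefix $u_0 v_0^{n_1}\cdots v_s^{n_s} u_s$, so that this prefix is read as a genuine nonnegative integer; the computation from \cite[Remark~3.4]{AB1} then expresses that integer as $c_0 + c_1 k^{\delta_s n_s} + \cdots + c_s k^{\delta_s n_s + \cdots + \delta_1 n_1}$ with positive exponents. The factor $k^q$ with $q\in -\mathbb{N}$ appears only afterward, from moving the radix point back, and the periodic tail $[_{\bullet} v_{s+1}^\omega]_k$ (a rational with denominator $k^p-1$) is folded into $c_0$ at the end.

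What this buys over your approach: since the $c_j$ arise from an integer computation, their denominators are automatically products of terms $k^{\delta_i}-1$ with no stray powers of $k$, so $(k^\ell-1)c_j\in\mathbb{Z}$ is immediate and your ``absorb the power of $k$ into $q$'' step is unnecessary. More importantly, setting $n_1=\cdots=n_s=0$ gives $c_0+\cdots+c_s = [u_0 u_1\cdots u_s]_k$, visibly a nonnegative integer. Your proposed fix for this last point does not work as stated: scaling by $k^e$ clears the $k$-power part of the denominator of $\nu_r(u_1\cdots u_d v_d^\omega)$ but not the $(k^{|v_d|}-1)$ part coming from the infinite tail, so $k^e(c_0+\cdots+c_s)$ is of the form (integer)${}+[_{\bullet} v_d^\omega]_k$ and is not an integer in general. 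To repair it you must separate the tail from the finite prefix first, which is precisely the radix-point shift the paper performs at the outset.
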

\begin{proof} We know that a sparse $k$-regular subset of $[0,1]$ in a finite union of sets $S_1,\ldots ,S_m$ with each $S_i$ a set of the form
$$\{[ _{\bullet} u_0 v_0^{n_1} u_1 v_1^{n_1} \cdots v_s^{n_s} u_s v_{s+1}^{\omega}]_k \colon n_1,\ldots ,n_s\ge 0\}.$$
Notice this set $S_i$ is contained in the union of all sets of the form $k^{-q} T_i$ with $q\ge 0$, where $T_i$ is the set of rational numbers of the form
$$\{[u_0 v_0^{n_1} u_1 v_1^{n_1} \cdots v_s^{n_s} u_s {_{\bullet}} v_{s+1}^{\omega}\}.$$
By a computation that is analogous to the one performed in \cite[Remark 3.4]{AB1}, the set of natural numbers of the form
$[u_0 v_0^{n_1} u_1 v_1^{n_1} \cdots v_s^{n_s} u_s]_k$ is just the set
$$\{ c_0   + c_1 k^{\delta_s n_s} + c_2 k^{\delta_s n_s + \delta_{s-1} n_{s-1}}\cdots + c_s k^{\delta_s n_s+\cdots +\delta_1 n_1}\colon n_1,\ldots ,n_s\ge 0\}$$ for some $c_i, \delta_i$ as above.  Since $[ _{\bullet} v_{s+1}^{\omega}]_k$ is a rational number of the form $c/(k^p-1)$ for some positive integer $c$ and some $p\ge 1$, we may replace $c_0$ by 
$c_0+[ _{\bullet} v_{s+1}^{\omega}]_k$ and we get the desired result.  
\end{proof}
We call a $k$-regular subset of $[0,1]$ of the form $$\{[ _{\bullet} u_0 v_0^{n_1} u_1 v_1^{n_1} \cdots v_s^{n_s} u_s v_{s+1}^{\omega}]_k \colon n_1,\ldots ,n_s\ge 0\}$$ a \emph{simple} sparse $k$-regular set of \emph{length} $s$.

The following result is primarily a consequence of Theorem \ref{thm:eg}.
\begin{lem} Let $k$ and $\ell$ be multiplicatively independent integers, let $n,m\ge 1$, and let $a_0,\ldots, a_{n+m+1}$ be nonzero rational numbers, then there are at most 
\begin{equation}
\exp(3(6(n+m))^{3n+3m})
\end{equation}
solutions to the equation
\begin{equation}
a_0X_0+\cdots +a_{n+m+1} X_{n+m+1}=0 \label{eq:abc}
\end{equation}
in which each $X_i$ is a power of $k$ for $i=0,\ldots ,n$ and each $X_i$ is a power of $\ell$ for $a_{n+1},\ldots , a_{n+m+1}$.
\label{lem:nondegen}
\end{lem}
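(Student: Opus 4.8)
The plan is to turn \eqref{eq:abc} into an $S$-unit equation over $\Q$ and invoke the quantitative Evertse--Schlickewei--Schmidt bound, Theorem \ref{thm:eg}.

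First I would write $X_i = k^{u_i}$ for $0\le i\le n$ and $X_i=\ell^{v_i}$ for $n+1\le i\le n+m+1$, with $u_i,v_i\in\Z$, and let $\Gamma\le\Q^\times$ be the multiplicative group generated by $k$ and $\ell$; since $k$ and $\ell$ are multiplicatively independent, $\Gamma$ is free abelian of rank $2$ and $\langle k\rangle\cap\langle \ell\rangle=\{1\}$. Dividing \eqref{eq:abc} by the nonzero term $-a_0X_0$ produces a unit equation
\[
\sum_{i=1}^{n+m+1} b_i\,y_i = 1,\qquad b_i:=-a_i/a_0\in\Q^\times,\quad y_i:=X_i/X_0\in\Gamma,
\]
with $y_i\in\langle k\rangle$ for $i\le n$. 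A solution $(X_0,\dots,X_{n+m+1})$ of \eqref{eq:abc} is recovered from the tuple $(y_1,\dots,y_{n+m+1})$: two solutions with the same $y$-tuple have their $k$-parts differing by a common factor $c\in\langle k\rangle$ and their $\ell$-parts differing by the same $c$, which forces $c=1$ because $c$ would then be simultaneously a power of $k$ and a power of $\ell$. Hence it is enough to bound the number of solutions $(y_1,\dots,y_{n+m+1})\in\Gamma^{\,n+m+1}$ of the displayed unit equation.

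Next I would dispose of degeneracy. Given a solution, I would stratify according to its vanishing-subsum pattern, i.e.\ a partition of $\{1,\dots,n+m+1\}$; on each block $B$ of the partition, dividing by one of its terms yields a \emph{nondegenerate} equation $\sum_{i} b'_i y'_i = 1$ with $y'_i\in\Gamma$, to which Theorem \ref{thm:eg} applies with rank $r=2$ (so $r+1=3$), giving at most $\exp\!\big((6|B|)^{3|B|}\cdot 3\big)$ solutions; it is precisely multiplicative independence of $k$ and $\ell$, together with the pure-$k$/pure-$\ell$ structure of the variables, that prevents such a block from acquiring a free $\Gamma$-scaling, so that each block genuinely contributes only finitely many solutions. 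Multiplying the per-block bounds, summing over the at most $(n+m+1)^{\,n+m+1}$ partitions, and using $|B|\le n+m+1$ throughout, a routine estimate absorbs the combinatorial overhead into the exponential and yields the stated bound $\exp\!\big(3(6(n+m))^{3n+3m}\big)$.

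The main obstacle is exactly this degeneracy bookkeeping: one must verify that a vanishing subsum living entirely among the powers of $k$ (or entirely among the powers of $\ell$) does not produce an infinite family of solutions, and it is here --- rather than in the reduction step --- that the hypothesis that $k$ and $\ell$ are multiplicatively independent does real work. Everything else is the standard passage from a linear relation over a field of characteristic zero to an $S$-unit equation, followed by a direct application of Theorem \ref{thm:eg}.
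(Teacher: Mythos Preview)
Your reduction is exactly the paper's: divide \eqref{eq:abc} by $a_0X_0$, land in $\sum_{i=1}^{n+m+1} b_i y_i=1$ with $y_i\in\Gamma=\langle k,\ell\rangle$ of rank $2$, apply Theorem~\ref{thm:eg}, and recover $(X_0,\ldots,X_{n+m+1})$ from $(y_1,\ldots,y_{n+m+1})$ using $\langle k\rangle\cap\langle\ell\rangle=\{1\}$. That part is fine and matches the paper's proof essentially line for line.

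The gap is in your degeneracy bookkeeping. You assert that ``it is precisely multiplicative independence of $k$ and $\ell$, together with the pure-$k$/pure-$\ell$ structure of the variables, that prevents such a block from acquiring a free $\Gamma$-scaling.'' This is false when a block $B$ of the partition lies entirely among the $k$-variables (or entirely among the $\ell$-variables): if $\sum_{i\in B} a_i X_i=0$ with every $X_i\in\langle k\rangle$, then $(cX_i)_{i\in B}$ is again a solution for every $c\in\langle k\rangle$, and there is no $\ell$-variable in the block to pin down $c$. Concretely, with $n=m=1$ and $a_0=a_2=1$, $a_1=a_3=-1$, the equation $X_0-X_1+X_2-X_3=0$ has the infinite family $X_0=X_1=k^a$, $X_2=X_3=\ell^b$. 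So the lemma as literally stated (without the word ``nondegenerate'') is not true, and no amount of partition counting will rescue it.

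What is actually going on is that the lemma is intended for \emph{nondegenerate} solutions only: note the label \texttt{lem:nondegen}, the fact that the paper's proof explicitly passes from ``a nondegenerate solution to \eqref{eq:abc}'' to ``a nondegenerate solution'' of the unit equation, and the way the lemma is invoked in Lemma~\ref{lem:nondegen2}, where it is applied to subsums that by hypothesis meet both the $k$-side and the $\ell$-side. The degeneracy analysis you attempt here is precisely the content of Lemma~\ref{lem:nondegen2}, and there the crucial extra hypothesis is that no non-trivial subsum of the $k$-part alone or of the $\ell$-part alone vanishes --- exactly to rule out the pure-$k$ and pure-$\ell$ blocks that break your argument. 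So: drop the partition argument, add the missing word ``nondegenerate'' to the statement you are proving, and your proof is then the same as the paper's.
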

\begin{proof}
    Letting $a_i' = -a_i/a_0$ for $i=0,\ldots ,m$, we see that after dividing Equation (\ref{eq:abc}) by $a_0X_0$ that a nondegenerate solution to 
Equation (\ref{eq:abc}) of the desired form corresponds to a nondegenerate solution $(Y_1,\ldots ,Y_{n+m+1})$ to 
the equation $$\sum_{i=1}^{n+m+1} a_i' Y_i = 1$$ in which $Y_1,\ldots, Y_n$ are integer powers of $k$, and $Y_{n+1},\ldots ,Y_{n+m+1}$ are of the form $k^c$ times an integer power of $\ell$ for some fixed $c$.  (Here $k^c=1/X_0$.) By Theorem \ref{thm:eg} there are at most 
$\exp(3(6(n+m+1))^{3n+3m+1})$ nondegenerate solutions to this equation.  Since the integer power $k^c$ is uniquely determined by $Y_{n+1}$, we can recover our original solution $(X_0,\ldots ,X_{n+m+1})$ from $(Y_1,\ldots ,Y_{n+m+1})$.  The result follows.
\end{proof}
\begin{lem} Let $k$ and $\ell$ be multiplicatively independent integers, let $n,m\ge 1$, and let $a_0,\ldots, a_{n+m+1}$ be nonzero rational numbers, then there are at most 
\begin{equation}
(n+m+2)^{n+m+2}\exp(3(6(n+m+1))^{3(n+m+1)})
\end{equation}
solutions to the equation
$$
a_0X_0+\cdots +a_{n+m+1} X_{n+m+1}=0$$
in which each $X_i$ is a power of $k$ for $i=0,\ldots ,n$ and each $X_i$ is a power of $\ell$ for $a_{n+1},\ldots , a_{n+m+1}$ and in which no non-trivial subsum of either
$a_0X_0+\cdots +a_n X_n$ or $a_{n+1} X_{n+1}+\cdots + a_{n+m+1} X_{n+m+1}$ vanishes.
\label{lem:nondegen2}
\end{lem}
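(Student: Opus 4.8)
The plan is to run the standard decomposition of a solution into minimal vanishing subsums, and to use the hypothesis precisely to force every block of that decomposition to involve both a power of $k$ and a power of $\ell$. Write $N=n+m+1$, so the equation has $N+1=n+m+2$ terms $a_iX_i$, the first $n+1$ carrying powers of $k$ and the last $m+1$ carrying powers of $\ell$. Given any solution of the asserted type, I would first peel off a vanishing subsum $P_1\subseteq\{0,\dots,N\}$ that is minimal with respect to inclusion, delete those indices, and repeat on the remaining index set (whose terms still sum to $0$). This yields a partition $\{P_1,\dots,P_t\}$ of $\{0,\dots,N\}$ in which each $\sum_{i\in P_j}a_iX_i=0$ while no proper non-empty subsum of an individual block vanishes. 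Since each $a_iX_i$ is nonzero (a nonzero rational times a power of $k$ or $\ell$), no block is a singleton; and since by hypothesis no non-empty sub-collection of the terms of $a_0X_0+\cdots+a_nX_n$, and none of $a_{n+1}X_{n+1}+\cdots+a_NX_N$, has vanishing sum, no block is contained in $\{0,\dots,n\}$ or in $\{n+1,\dots,N\}$. Hence every $P_j$ is \emph{mixed}: it contains at least one $k$-power index and at least one $\ell$-power index, and in particular $|P_j|\ge 2$.

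Next I would bound, for a fixed mixed partition $\mathcal P=\{P_1,\dots,P_t\}$, the number of tuples $(X_0,\dots,X_N)$ for which every block equation holds with no proper non-empty vanishing subsum; since distinct blocks involve disjoint variables, this count is the product over $j$ of the number of solutions of the block equation. For one block $P_j$, I would choose a reference index $i^*\in P_j$ with $X_{i^*}$ a power of $k$ and divide the block equation by $a_{i^*}X_{i^*}$, obtaining a nondegenerate equation $\sum_{i\in P_j\setminus\{i^*\}}a_i'Y_i=1$ with $a_i'=-a_i/a_{i^*}\in\Q^*$ and each $Y_i=X_i/X_{i^*}$ in $\Gamma=\langle k,\ell\rangle\le\Q^*$, a finitely generated group of rank $2$ because $k$ and $\ell$ are multiplicatively independent. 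Theorem \ref{thm:eg} with $r=2$ (together with the trivial count when $|P_j|=2$) bounds the number of such $(Y_i)$ by $\exp(3\,(6(|P_j|-1))^{3(|P_j|-1)})$; for the single-block partition this is essentially Lemma \ref{lem:nondegen}. To return to solutions I would observe that $(Y_i)$ determines $(X_i)_{i\in P_j}$: picking $i'\in P_j$ with $X_{i'}$ a power of $\ell$ (possible as $P_j$ is mixed), the element $Y_{i'}=k^{-c}\ell^{d}$, written in the basis of $\Gamma\cong\Z^2$, exhibits the integer $c$, so $X_{i^*}=k^{c}$ is determined and then $X_i=Y_iX_{i^*}$ for all $i\in P_j$; hence block $P_j$ contributes at most $\exp(3\,(6(|P_j|-1))^{3(|P_j|-1)})$ solutions.

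To assemble the bound, set $e_j=|P_j|-1\ge1$, so $\sum_j e_j=(N+1)-t\le n+m+1$. The elementary inequality $\sum_j(6e_j)^{3e_j}\le(6(n+m+1))^{3(n+m+1)}$ then holds: if $t=1$ it is monotonicity of $x\mapsto(6x)^{3x}$, and if $t\ge2$ each $e_j\le n+m$ and $t\le n+m+1$, so the left side is at most $(n+m+1)(6(n+m))^{3(n+m)}$, which is below $(6(n+m+1))^{3(n+m+1)}$. Therefore the number of solutions attached to a fixed mixed partition is at most $\prod_j\exp(3(6e_j)^{3e_j})=\exp\!\big(3\sum_j(6e_j)^{3e_j}\big)\le\exp(3(6(n+m+1))^{3(n+m+1)})$. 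Every solution counted by the lemma is consistent with at least one mixed partition, and the number of partitions of an $(n+m+2)$-element set is the Bell number $B_{n+m+2}\le(n+m+2)^{n+m+2}$; a union bound over partitions then yields the claimed estimate $(n+m+2)^{n+m+2}\exp(3(6(n+m+1))^{3(n+m+1)})$.

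The main obstacle I anticipate is the middle step: verifying that the per-block reduction really is a \emph{nondegenerate} $S$-unit equation over $\Gamma$ (this is exactly what minimality of the peeled-off subsums supplies) and, more delicately, that the passage from a block solution to its ratio tuple $(Y_i)$ is injective. The latter would fail for a block lying entirely within one base, and it is precisely the hypothesis that no subsum of the $k$-part or of the $\ell$-part vanishes that rules such blocks out and forces every block to contain a ratio of the form $\ell^{d}/k^{c}$ that pins down the reference variable; keeping this bookkeeping aligned with the quantitative input of Theorem \ref{thm:eg} is the part that requires care.
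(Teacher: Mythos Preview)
Your proposal is correct and follows essentially the same route as the paper: partition the index set into minimal vanishing subsums, observe that the hypothesis forces every block to be mixed, bound each block by the $S$-unit count (the paper cites Lemma~\ref{lem:nondegen} for this step, whereas you inline the division-by-a-reference-term argument and the injectivity of $(X_i)\mapsto(Y_i)$), and then multiply by a crude upper bound on the number of set partitions. Your write-up is somewhat more explicit than the paper's about the inequality $\sum_j(6e_j)^{3e_j}\le(6(n+m+1))^{3(n+m+1)}$ and about why the ratio tuple determines the block solution, but the underlying argument is the same.
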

\begin{proof} Let $\Gamma\le \mathbb{Q}^*$ be the rank two group generated by $k$ and $\ell$.
For each solution to
$$a_0X_0+\cdots +a_{n+m+1} X_{n+m+1}=0$$ such that no subsum of either $a_0X_0+\cdots +a_n X_n$ or $a_{n+1} X_{n+1}+\cdots + a_{n+m+1} X_{n+m+1}$ vanishes, we can form a set partition 
of the set of variables $\{X_0,\ldots, X_{n+m+1}\}$ into disjoint non-empty subsets $V_1,\ldots ,V_s$ such that the subsum 
corresponding to the variables in each $V_i$ vanishes and no proper subsum vanishes. By assumption, each $V_i$ must intersect 
both $\{X_0,\ldots ,X_n\}$ and $\{X_{n+1},\ldots ,X_{n+m+1}\}$ non-trivially.  Then by Lemma \ref{lem:nondegen}, there are at 
most $\exp(3(6(|V_i|-1))^{3(|V_i|-1)})$
nondegenerate solutions to the subsum equation
$$\sum_{X_j\in V_i} a_j X_j  = 0$$ with each $X_j$ a power of $k$ for $j\le n$ and a power of $\ell$ for $j>n$.

Thus for the set partition as above with parts $V_1,V_2,\ldots, V_s$ we have at most 
$$\prod_{i=1}^s \exp(3(6(|V_i|-1))^{3(|V_i|-1)}) \le \exp(3(6(n+m+1))^{3(n+m+1)})$$
solutions, where the last step follows from straightforward estimates for the exponential function.

Since a set partition of $\{0,\ldots ,n+m+1\}$ with $s$ parts naturally gives rise to a surjective map from $\{0,\ldots ,n+m+1\}$ to $\{1,\ldots ,s\}$, and since $s\le n+m+2$, we see there are at most $(n+m+2)^{n+m+2}$ possible set partitions.  
Putting this together, we get the desired bound.
\end{proof}

\begin{prop} \label{sets} Let $k$ and $\ell$ be multiplicatively independent positive integers and let 
$S$ be a sparse $k$-regular subset of $[0,1]$ of the form
$$\{ {_{\bullet}} [v_0 w_1^* v_1 w_2^* \cdots v_{s} w_s^* v_{s+1} w_{s+1}^{\omega} ]_k\}$$ and let 
$T$ be sparse $\ell$-automatic set of the form
 $$\{[ {_{\bullet}} u_0 y_1^* u_1 y_2^* \cdots u_{t} y_t^* u_{t+1} y_{t+1}^{\omega}]_{\ell}\}.$$ 
 Then $$\# S\cap T \le 2^{s+t+1} \cdot (s+t+2)^{s+t+2}\exp(3(6(s+t+1))^{3(s+t+1)}).$$
\end{prop}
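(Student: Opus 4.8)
The plan is to read the equation ``an element of $S$ equals an element of $T$'' as a vanishing $S$-unit sum over the rank-two group generated by $k$ and $\ell$, and then count its solutions using Lemma~\ref{lem:nondegen2}. First, if $s=0$ then $S$ is a single rational number and the asserted bound (which is at least $1$) holds; likewise if $t=0$, so assume $s,t\ge 1$. By the computation analogous to the one in \cite[Remark~3.4]{AB1} underlying Lemma~\ref{rem:sparse}, there are fixed rationals $c_0,\dots,c_s$, $d_0,\dots,d_t$ and fixed positive integers $\delta_1,\dots,\delta_s$, $\gamma_1,\dots,\gamma_t$ such that the elements of $S$ are exactly the numbers $c_0+\sum_{i=1}^{s} c_i k^{-(\delta_1 n_1+\cdots+\delta_i n_i)}$ with $n_1,\dots,n_s\ge 0$, and the elements of $T$ are exactly the numbers $d_0+\sum_{j=1}^{t} d_j \ell^{-(\gamma_1 m_1+\cdots+\gamma_j m_j)}$ with $m_1,\dots,m_t\ge 0$. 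Deleting a vanishing coefficient only decreases $s$ or $t$, and since the claimed bound is nondecreasing in $s+t$ and the cases $s=0$ and $t=0$ are settled, I may assume every $c_i$ and $d_j$ is nonzero.

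Next, given $z\in S\cap T$ I would fix witnessing tuples $(n_1,\dots,n_s)$, $(m_1,\dots,m_t)$, set $X_i=k^{-(\delta_1 n_1+\cdots+\delta_i n_i)}$ and $Y_j=\ell^{-(\gamma_1 m_1+\cdots+\gamma_j m_j)}$, and rewrite $z\in S$, $z\in T$ as the single vanishing sum
\[
(c_0-d_0)\cdot k^{0}+\sum_{i=1}^{s} c_i X_i-\sum_{j=1}^{t} d_j Y_j=0,
\]
which has at most $s+t+1$ nonzero terms, at most $s+1$ of which are rational multiples of powers of $k$ (including the constant, which we regard as $k^0$) and at most $t$ of which are rational multiples of powers of $\ell$. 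I then decompose this sum into its minimal vanishing subsums (``blocks''), each block being either \emph{mixed}, meeting both the $k$-part and the $\ell$-part, or \emph{pure}, contained in one of them.

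The crucial observation is that the pure blocks do not affect the value of $z$: each pure block sums to $0$, so $z=c_0+\sum_{i=1}^{s} c_iX_i$ equals a fixed rational (namely $c_0$ when the $k^0$-term lies in a mixed block, and $d_0$ when it lies in a pure $k$-block) plus $\sum_{i\ \mathrm{mixed}} c_iX_i$. Hence $z$ is already determined by the $X_i$ lying in mixed blocks together with the fixed constants; moreover those $X_i$, together with the $Y_j$ lying in mixed blocks, form a solution of the \emph{fixed} vanishing sum obtained by keeping only the mixed terms, and by minimality of the blocks this restricted sum has no vanishing subsum confined to one base. Recording which of the $\le s+1$ $k$-terms and which of the $\le t$ $\ell$-terms lie in mixed blocks produces at most $2^{s+1}\cdot 2^{t}=2^{s+t+1}$ combinatorial types; for each type the restricted mixed sum is one fixed equation of size at most $s+t+2$ with no one-base vanishing subsum, so by Lemma~\ref{lem:nondegen2} (using that its bound is increasing in the number of terms) it has at most $(s+t+2)^{s+t+2}\exp(3(6(s+t+1))^{3(s+t+1)})$ solutions, and thus accounts for at most that many values $z$. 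Summing over the $\le 2^{s+t+1}$ types yields the asserted bound.

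The main obstacle, and the reason finiteness is not obvious, is precisely this observation: a pure block can be satisfied by infinitely many tuples $(n_i)$ or $(m_j)$, so one must resist counting witnessing tuples and instead count the values $z$, which depend only on the mixed part. What remains is routine bookkeeping: pinning the number of types down to $2^{s+t+1}$ by labeling each of the $s+1$ $k$-terms and $t$ $\ell$-terms with whether it lies in a mixed block, and handling the degenerate shapes --- an empty mixed part (then $z$ is one of at most two fixed rationals), a mixed part containing only one $k$-term or only one $\ell$-term (bounded directly via Theorem~\ref{thm:eg}), and the cases $s\le 1$ or $t\le 1$ --- each of which contributes at most the stated bound.
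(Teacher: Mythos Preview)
Your overall strategy coincides with the paper's: translate membership in $S\cap T$ into a vanishing sum of rational multiples of powers of $k$ and $\ell$, strip away the ``pure'' part (which does not affect the value $z$), record the at most $2^{s+t+1}$ possible shapes of what was stripped, and apply Lemma~\ref{lem:nondegen2} to the remaining equation. The paper does exactly this, using Lemma~\ref{rem:sparse} (which introduces an extra free exponent $q$, giving $s+t+2$ terms rather than your $s+t+1$) instead of Remark~\ref{sparse-description0}; either parametrization is fine.

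There is, however, a genuine gap at the step ``by minimality of the blocks this restricted sum has no vanishing subsum confined to one base.'' Minimality of each individual block does \emph{not} prevent a pure vanishing subsum from straddling two or more mixed blocks. For a concrete model, take $k$-terms $1,-1,2$ and $\ell$-terms $3,-3,-2$; the decomposition $\{1,2,-3\}\cup\{-1,3,-2\}$ consists of two minimal mixed blocks, yet the pure $k$-subsum $\{1,-1\}$ of the mixed part vanishes. For such a solution your restricted equation fails the hypothesis of Lemma~\ref{lem:nondegen2}, and the bound does not apply. The paper avoids this by never forming a block decomposition: for each solution it removes a \emph{maximal} vanishing subsum from the $k$-side and, separately, a maximal vanishing subsum from the $\ell$-side; maximality then forces what remains on each side to be nondegenerate, so Lemma~\ref{lem:nondegen2} applies directly. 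Your argument is repaired by the same device (equivalently: first peel off maximal pure vanishing subsums, and only then call whatever survives the ``mixed'' part); the combinatorial type is then simply the pair of removed subsets, and the $2^{s+t+1}$ count and final bound go through unchanged.
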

\begin{proof}
By Lemma \ref{rem:sparse} we have that $S$ is contained in a set of the form
 $$\left\{c_0 k^{-q} + c_1 k^{-q+\delta_s n_s} + c_2 k^{-q+\delta_s n_s + \delta_{s-1} n_{s-1}}\cdots + c_s k^{-q+\delta_s n_s+\cdots +\delta_1 n_1}  \colon n_1,\ldots ,n_s, q\ge 0\right\},$$ where $c_0,\ldots ,c_s$ are rational numbers. 
 Similarly, $T$ is contained in a set of  the form
  $$\left\{ d_0\ell^{-q'} + d_1 \ell^{-q'+\delta_t' m_t} + d_2 \ell^{-q'+\delta_t' m_s + \delta_{t-1}' m_{t-1}}\cdots + d_t \ell^{-q'+\delta_t'm_t+\cdots +\delta_1' m_1}\colon m_1,\ldots, m_t,q'\ge 0\right\},$$ where $d_0,\ldots, d_t$ are rational numbers.
  
Then an element in $S\cap T$ corresponds to a solution to the equation 
\begin{equation}
\label{eq:S}
d_0 X_0+\cdots +d_t X_t + d_{t+1} X_{t+1}+\cdots  + d_{t+s+1} X_{t+s+1}=0,
\end{equation}
where $X_0=\ell^{-q'}, X_1=\ell^{-q'+\delta_t' m_t},\ldots , X_t= \ell^{-q'+\delta_t' m_t+\cdots +\delta_1' m_1}$,
$X_{t+1}=k^{-q}$, \ldots , $X_{t+s+1}=k^{-q+\delta_s n_s+\cdots +\delta_1 n_1}$, and where we take $d_{t+j}=-c_{j-1}$ for $j=1,\ldots ,s+1$. Moreover, the element in the intersection in this case is given by $$A:=d_0 X_0+\cdots + d_t X_t = -(d_{t+1} X_{t+1}+\cdots + d_{t+s+1} X_{t+s+1}).$$

Since we are only concerned about the quantity $A$, we may remove a maximal vanishing subsum of $d_0 X_0+\cdots + d_t X_t$ and a maximal vanishing subsum of $d_{t+1} X_{t+1}+\cdots  d_{t+s+1} X_{t+s+1}$ and assume that both subsums are nondegenerate.

Then since there are at most $2^{t+s+1}$ subsets of $\{X_1,\ldots,X_{t+s+1}\}$ there are at most $2^{t+s+1}$ choices for the maximal vanishing subsets we remove.  Once we fix these subsets, Lemma \ref{lem:nondegen2} gives there are at most
$$(s+t+2)^{s+t+2}\exp(3(6(s+t+1))^{3(s+t+1)})$$
solutions to the resulting equation of the desired form.
Thus there are at most $$2^{s+t+1} \cdot (s+t+2)^{s+t+2}\exp(3(6(s+t+1))^{3(s+t+1)})$$ solutions to Equation (\ref{eq:S}) of the required form.
\end{proof}
\begin{rem} In general if $S$ and $T$ are respectively a sparse $k$-regular subset of $[0,1]$ and a sparse $\ell$-regular subset of $[0,1]$, then there are $p,q\ge 1$ such that $S$ is the union of $p$ simple sparse sets of lengths $s_1,\ldots ,s_p$ and $T$ is the union of $q$ simple sparse sets of lengths $t_1,\ldots ,t_q$.  Moreover, we can determine $p$, $q$, $s_1,\ldots, s_p, t_1,\ldots ,t_q$ from the B\"uchi automata that accept $S$ and $T$. 
In fact, if $a$ and $b$ are respectively the number of states in the B\"uchi automata that accept $S$ and $T$, we have $p\le k^a$, $q\le \ell^b$,
$s:=\max(s_1,\ldots ,s_p)\le a$, and $t:=\max(t_1,\ldots ,t_q)\le b$.
Proposition \ref{sets} then gives a computable bound for $S\cap T$ purely in terms of the number of states in the B\"uchi automata accepting the sets $S$ and $T$.  
\label{rem:gen}
\end{rem}

\begin{thm} Let $k$ and $\ell$ be multiplicatively independent positive integers and let $S$ and $T$ be respectively $k$- and $\ell$-regular sparse subsets of $[0,1]^d$, with $d\ge 1$.  Then $S\cap T$ is finite.  \label{thm:genint}
\end{thm}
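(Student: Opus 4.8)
The plan is to reduce the statement to the one-dimensional case $d=1$, where it follows from Proposition \ref{sets} together with Remark \ref{rem:gen}. The key observation is that passing to a single coordinate preserves sparsity and does not change the base. Write $\pi_i\colon \R^d\to\R$ for the projection onto the $i$-th coordinate. If $S$ is $k$-sparse, then by Proposition \ref{sparseequiv} we may write $S=\nu_k(L)$ where $L\subseteq ([k]^d)^{\omega}$ is a finite union of sets of the form (\ref{sparse form}); applying to each word of $L$ the map that discards all but the $i$-th track yields a language $L_i\subseteq [k]^{\omega}$ that is again a finite union of sets of the form (\ref{sparse form}) (the defining words keep the same lengths, so non-empty words stay non-empty), and $\pi_i(S)=\nu_k(L_i)$. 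Hence $\pi_i(S)$ is a $k$-sparse subset of $[0,1]$, and likewise $\pi_i(T)$ is an $\ell$-sparse subset of $[0,1]$ for every $i$.

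Next I would handle the case $d=1$. By Remark \ref{rem:gen} we can write $S=S_1\cup\cdots\cup S_p$ and $T=T_1\cup\cdots\cup T_q$, where each $S_a$ is a simple sparse $k$-regular set and each $T_b$ is a simple sparse $\ell$-regular set. Then $S\cap T=\bigcup_{a\le p,\,b\le q}(S_a\cap T_b)$, and by Proposition \ref{sets} each of the finitely many sets $S_a\cap T_b$ is finite; hence $S\cap T$ is finite when $d=1$.

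Finally I would combine the two steps. For general $d$ and for each $i\in\{1,\ldots,d\}$ we have $\pi_i(S\cap T)\subseteq \pi_i(S)\cap\pi_i(T)$, and the latter is finite by the $d=1$ case applied to the sparse sets $\pi_i(S)$ and $\pi_i(T)$ produced in the first step. Since $S\cap T\subseteq \pi_1(S\cap T)\times\cdots\times\pi_d(S\cap T)$ is contained in a finite product of finite sets, it is finite. I do not anticipate a genuine obstacle: the entire quantitative content lives in Proposition \ref{sets}, and the only points needing (routine) verification are that restricting a sparse $\omega$-language to a single track leaves it sparse over the same base, and that a subset of $\R^d$ all of whose coordinate projections are finite must itself be finite.
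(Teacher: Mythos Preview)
Your proposal is correct and follows essentially the same route as the paper: project onto each coordinate to obtain $k$- and $\ell$-sparse subsets of $[0,1]$, apply the one-dimensional result (Proposition \ref{sets} via Remark \ref{rem:gen}) to conclude each $\pi_i(S)\cap\pi_i(T)$ is finite, and then observe that $S\cap T$ sits inside the product of these finite sets. Your argument is slightly more explicit than the paper's about why coordinate projections preserve sparsity, but the strategy is identical.
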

\begin{proof} Let $\pi_i: [0,1]^d \to [0,1]$ denote the projection onto the $i$-th coordinate for $i=1,\ldots ,d$.  Then $\pi_i(S)$ is a sparse $k$-regular subset of $[0,1]$ and $\pi_i(T)$ is a sparse $\ell$-regular subset of $[0,1]$.  
It follows that $Z_i:=\pi_i(S)\cap \pi_i(T)$ is a finite set by Remark \ref{rem:gen}.  Then since $S\cap T\subseteq Z_1\times Z_2\times \cdots \times Z_d$, we see that $S\cap T$ is finite.
\end{proof}
\begin{rem} From the number of states in trim B\"uchi automata that accept $S$ and $T$, we can obtain bounds for the intersection $\pi_i(S)\cap \pi_i(T)$ for $i=1,\ldots ,d$ by Remark \ref{rem:gen} and so we can determine an upper bound for $S\cap T$ in terms of this data.
\end{rem}

\end{document}